\def\@fnsymbol#1{\ensuremath{\ifcase#1\or \dagger\or \ddagger\or
   \mathsection\or \mathparagraph\or \|\or **\or \dagger\dagger
   \or \ddagger\ddagger \else\@ctrerr\fi}}
\definecolor{darkgreen}{rgb}{0.2, 0.5, 0.2}
\DeclareMathOperator{\diag}{diag}
\DeclareMathOperator{\rank}{rank}
\DeclareMathOperator*{\argmin}{arg\,min}
\DeclareMathOperator{\ehull}{e-hull}
\DeclareMathOperator{\mhull}{m-hull}
\DeclareMathOperator{\spann}{span}
\newcommand{\GW}[1]{#1}
\newcommand{\GWS}[1]{#1}
\newcommand{\set}[1]{\left\{ #1 \right\}}
\newcommand{\eqdef}{\triangleq}
\newcommand{\R}{\mathbb{R}}
\newcommand{\N}{\mathbb{N}}
\newcommand{\unit}{\boldsymbol{1}}
\newcommand{\trn}{^\intercal}
\newcommand{\cmark}{{\color{darkgreen}\ding{51}}}
\newcommand{\xmark}{{\color{red}\ding{55}}}
\newcommand{\abs}[1]{\left| #1 \right|}
\newcommand{\nrm}[1]{\left\Vert #1 \right\Vert}
\newcommand{\PR}[2][]{\mathbb{P}_{#1}\left( #2 \right)}
\newcommand{\eps}{\varepsilon}
\newcommand{\stoch}{\mathfrak{s}}
\newcommand{\bigO}{\mathcal{O}}
\newcommand{\kl}[2]{D\left(#1 \middle| \middle| #2\right)}
\newcommand{\rev}{_{\mathsf{rev}}}
\newcommand{\sym}{_{\mathsf{sym}}}
\newcommand{\bis}{_{\mathsf{bis}}}
\newcommand{\iid}{_{\mathsf{iid}}}
\newtheorem{theorem}{Theorem}[section]
\newtheorem{lemma}{Lemma}[section]
\newtheorem{definition}{Definition}[section]
\newtheorem{proposition}{Proposition}[section]
\newtheorem{example}{Example}[section]
\newtheorem{corollary}{Corollary}[section]
\newtheorem{remark}{Remark}[section]
\providecommand{\keywords}[1]
{
  \small	
  \textbf{\textit{Keywords---}} #1
}
\title{\vspace{-2.0cm} Information Geometry \\ of Reversible Markov Chains}
\author[1]{Geoffrey Wolfer \thanks{email: geo-wolfer@m2.tuat.ac.jp. \\
International Research Fellow of Japan Society for the Promotion of Science. Supported in part by KAKENHI under Grant 21F20378.}}
\author[2]{Shun Watanabe \thanks{email: shunwata@cc.tuat.ac.jp. \\Supported in part by Japan Society for the Promotion of Science KAKENHI under Grant 20H02144.}}
\affil[1,2]{Department of Computer and Information Sciences \protect\\ Tokyo University of Agriculture and Technology}
\date{\today}
\begin{document}

\maketitle

\begin{abstract}
We analyze the information geometric structure of 
time reversibility for parametric families of irreducible transition kernels of Markov chains.
We define and characterize reversible exponential families of Markov kernels,
and show that irreducible and reversible 
Markov kernels form both a mixture family and, perhaps surprisingly, an exponential family in the set of all stochastic kernels.
We propose a parametrization of the entire manifold of reversible kernels,
and inspect reversible geodesics.
We define information projections onto the
reversible manifold, and derive closed-form expressions
for the e-projection and m-projection, 
along with Pythagorean identities with respect to information divergence,
leading to some new notion of reversiblization of Markov kernels.
We show the family of edge measures
pertaining to irreducible and reversible kernels also forms
an exponential family among distributions over pairs.
We further explore geometric properties of the reversible family,
by comparing them with other remarkable families of stochastic matrices.
Finally, we show that reversible kernels are, in a sense we define,
the minimal exponential family generated by the m-family of symmetric kernels,
and the smallest mixture family that comprises the e-family of memoryless kernels.

\end{abstract}

\keywords{Information geometry, Irreducible Markov chain,  Reversible Markov chain, Exponential family, Mixture family}

\clearpage
\tableofcontents

\section{Introduction}
\label{section:introduction}
Time reversibility is a fundamental property of many statistical laws of nature.
Inspired by \citet{schrodinger1931umkehrung},
Kolmogorov was the first \citep{dobrushin1988kolmogorov}, in his celebrated work \citep{kolmogorov1936theorie, kolmogorov1937umkehrbarkeit}, to investigate this notion in the context of Markov chains and diffusion processes.
Reversible chains also find 
numerous applications in computer science, 
for instance in queuing networks \citep{kelly2011reversibility}
or Markov Chain Monte Carlo sampling algorithms \citep{brooks2011handbook}. \GW{For instance, a random walk over a weighted network corresponds to a reversible Markov chains \citep[Section~3.2]{aldous2002reversible}}. 

Reversible Markov operators enjoy a considerably richer mathematical structure than their non-reversible counterparts, enabling a wide range of analytical tools and techniques.
Indeed, the significance of reversibility spans across 
surprisingly many areas of mathematics, from spectral theory \citep[Chapter~12]{levin2009markov} to abstract algebra \citep{pistone2013algebra}.
\GW{For instance, the mixing time of a reversible Markov chain, i.e. the time to guarantee closeness to stationarity, is controlled up to logarithmic factors by its absolute spectral gap (the difference of its two largest eigenvalues in magnitude).
The diversity of the existing tools and analyses prompts our first question of whether reversibility
can also be treated from an information geometry perspective}.

Through the lens of information geometry, the manifold of all irreducible Markov kernels forms both an exponential family (e-family) and a mixture family (m-family).
Our natural second question is whether we can find subfamilies of irreducible kernels that enjoy similar geometric properties,
or in other words, 
can we find submanifolds that are autoparallel with respect to affine connections of interest?
For instance, the set of doubly-stochastic matrices is known to form an m-family \citep{hayashi2014information},
while a tree model is an e-family of Markov kernels, 
if and only if it is an \emph{FSMX model} \citep{takeuchi2017information}.

In this article, we will answer these two questions, 
see that reversible irreducible Markov chains enjoy the structure of 
both exponential and mixture families, and explore their geometric properties.

\subsection{Related work}
\label{section:related-work}
The concept of exponential tilting of stochastic matrices using Perron-Frobenius (PF) theory can be traced back to the work of \citet{miller1961convexity}.
The large deviation theory for Markov chains, whose crown achievement is showing that the convex conjugate of the log-PF root of the tilted kernel essentially controls the large deviation rate was further developed by \citet{donsker1975asymptotic, gartner1977large, dembo2011large}. \citet{csiszar1987conditional} seem to be the first to recognize the exponential structure of the set of irreducible Markov kernels, in the context of information projections.
Independently,
\citet{ito1988} implicitly introduced the notion of \emph{asymptotic exponential families}, and exhibited irreducible Markov kernels as an example. 
\citet{takeuchi1998asymptotically} later formalized this definition (see also \citet{takeuchi2007exponential}), and \citet{takeuchi2017asymptotic} subsequently proved that exponential families and their asymptotic counterparts are equivalent.
\citet{nakagawa1993converse} formally defined the exponential family of irreducible Markov chains and \citet{nagaoka2017exponential} later gave a full treatment in the language of information geometry, proving its dually flat structure.
A notable collection of works has also explored the implications of this geometric structure for problems related to parameter estimation \citep{hayashi2014information}, hypothesis testing \citep{nakagawa1993converse, watanabe2017}, large deviation theory \citep{moulos2019optimal}, and hidden Markov models \citep{hayashi2019local,hayashi2021information}.

We refer the reader to \citet{levin2009markov} and \citet{amari2007methods} for thorough treatments of the theory of Markov chains and information geometry.

\subsection{Outline and main results}
\label{section:main-results}
In Section~\ref{section:preliminaries},
we begin with a primer on reversible Markov chains,
define exponential and mixture families,
and briefly discuss the importance of affine structures for our analysis of exponential families.
In Section~\ref{section:time-reversal-exponential-family},
we define a time-reversal operation on parametric families, 
and show in Proposition~\ref{proposition:time-reversal-e-family-is-e-family} that both m-families and e-families are closed under this transformation.
In Section~\ref{section:reversible-exponential-family}
we introduce the concept of a reversible e-family, and provide a characterization (Theorem~\ref{theorem:characterization-reversible-e-family}) of such family in terms of its 
carrier kernel and set of generator functions.
Adapting the Kolmogorov criterion, 
we show that the necessary and sufficient conditions 
can be verified in a time that depends polynomially on 
the number of states.
In Section~\ref{section:submanifold-reversible-kernels},
we prove that the set of all reversible and irreducible transition 
kernels is both an m-family, 
and an e-family (Theorem~\ref{theorem:quotient-space-dimension-e-family}), construct a basis (Theorem~\ref{theorem:reversible-basis}), and derive a parametrization (Theorem~\ref{theorem:explicit-parametrization}) of the entire set of reversible kernels.
In Section~\ref{section:information-projections},
we investigate information projections of 
an irreducible Markov chain onto its reversible submanifold.
We show that the projections verify Pythagorean identities, 
and obtain closed-form expressions (Theorem~\ref{theorem:e-m-projections}). Additionally,
we prove that the projections are always equidistant 
from an irreducible Markov kernel and its time-reversal (bisection property, Proposition~\ref{proposition:divergence-symmetry}).
In Section~\ref{section:edge-measure-family},
we show that reversible edge measures also form an e-family in distributions over pairs (Theorem~\ref{theorem:edge-measures-e-family}).
In Section~\ref{section:comparison-usual-classes},
we briefly compare the geometric properties of reversible chains with several other natural families of Markov kernels. 
Finally, in Section~\ref{section:generation-by-completion}, we
characterize the reversible family as both the smallest exponential family that comprises symmetric kernels (Theorem~\ref{theorem:e-hull-symmetric-is-reversible}), and the smallest mixture family that contains memoryless Markov kernels (Theorem~\ref{theorem:m-hull-iid-is-reversible}). 

\section{Preliminaries}
\label{section:preliminaries}
For $m \in \N$ we write $[m] = \set{1, 2, \dots, m}$.
Let $\mathcal{X}$ be a set such that $\abs{\mathcal{X}} = m < \infty$, identified with $[m]$, where to avoid trivialities, we also assume that $m > 1$.
We denote $\mathcal{P}(\mathcal{X})$ the probability simplex over $\mathcal{X}$, and $\mathcal{P}_{+}(\mathcal{X}) = \set{ \mu \in \mathcal{P}(\mathcal{X}) \colon \forall x \in \mathcal{X}, \mu(x) > 0}$. 
All vectors will be written as row-vectors, unless otherwise stated.
For some real matrices $A$ and $B$, $\rho(A)$ is 
the spectral radius of $A$, $f[A]$ for $f \colon \R \to \R$ is the entry-wise application of $f$ to $A$; $A \circ B$ is the Hadamard product of $A$ and $B$, $A > 0$ (resp. $A \geq 0$) means that $A$ is an entry-wise positive (resp. non-negative) matrix.
We will routinely identify a function $f \colon \mathcal{X}^2 \to \R$ with the linear operator $f \colon \R^ \mathcal{X} \to \R^\mathcal{X}$.

\subsection{Irreducible Markov chains}

\GW{We let $(\mathcal{X}, \mathcal{E})$ be a strongly connected directed graph, where $\mathcal{X}$ is the set of vertices, and $\mathcal{E} \text{ \GW{$\subset$} } \mathcal{X}^2$ the set of edges}. 
Let $\mathcal{F}(\mathcal{X}, \mathcal{E})$ be the set of all real functions over the set $\mathcal{E}$, identified with the totality of functions over $\mathcal{X}^2$ that are null outside of $\mathcal{E}$, and let $\mathcal{F}_+(\mathcal{X}, \mathcal{E}) \text{ \GW{$\subset$} } \mathcal{F}(\mathcal{X}, \mathcal{E})$ be the subset of positive functions over $\mathcal{E}$. 
Similarly, we define $\mathcal{P}(\mathcal{E}) = \mathcal{P}(\mathcal{X}^2) \cap \mathcal{F}_+(\mathcal{X}, \mathcal{E})$, the set of distributions whose mass is concentrated on the edge set $\mathcal{E}$.
We write $\mathcal{W}(\mathcal{X})$ for the set of row-stochastic transition kernels over the state space $\mathcal{X}$, and  $\mathcal{W}(\mathcal{X}, \mathcal{E})$ for the subset of irreducible kernels whose support is $\mathcal{E}$, i.e.
\begin{equation*}
\begin{split}
\mathcal{W}(\mathcal{X}) &\eqdef \set{ P \in \R^{\mathcal{X}^2} \colon P \geq 0, \forall x \in \mathcal{X}, \sum_{x' \in \mathcal{X}} P(x, x') = 1} ,\\
\mathcal{W}(\mathcal{X}, \mathcal{E}) &\eqdef \mathcal{F}_+(\mathcal{X}, \mathcal{E}) \cap \mathcal{W}(\mathcal{X}),
\end{split}
\end{equation*}
\GW{and where $P(x,x')$ corresponds to the transition probability from state $x$ to state $x'$ \footnote{\GW{We note that in information theory, $P(x,x')$ is often denoted by $P(x'|x)$. Our choice follows the applied probability literature (see e.g. \citet{levin2009markov}), and allows us to extend the notation seamlessly to general functions over $\mathcal{X}^2$.}}}.
For $P \in \mathcal{W}(\mathcal{X}, \mathcal{E})$, there exists a unique $\pi \in \mathcal{P}_+(\mathcal{X})$, such that $\pi P =\pi$ \citep[Corollary~1.17]{levin2009markov}, which we call the stationary distribution of $P$. 
When $\mathcal{E} = \mathcal{X}^2$ and if there is no ambiguity about the space under consideration, we may write more simply $\mathcal{F}, \mathcal{F}_+$ instead of $\mathcal{F}(\mathcal{X}, \mathcal{X}^2), \mathcal{F}_+(\mathcal{X}, \mathcal{X}^2)$ (a similar notation will apply to all subsequently defined spaces).

\subsection{Reversibility}

For an irreducible kernel $P$, we write 
$Q = \diag(\pi) P$ for the edge measure matrix, \citep[(7.5)]{levin2009markov}, which corresponds to stationary pair-probabilities of $P$, i.e. 
$Q(x, x') = \PR[\pi]{X_t = x, X_{t+1} = x'}$, and denote the set of irreducible edge measures by
$$\mathcal{Q}(\mathcal{X}, \mathcal{E}) \eqdef \set{ \diag(\pi) P \colon P \in \mathcal{W}(\mathcal{X}, \mathcal{E}), \pi P = \pi } \text{ \GW{$\subset$} } \mathcal{P}(\mathcal{E}).$$
Note that this definition is equivalent to
\begin{equation}
\label{eq:equivalent-edge-measure-set}
    \mathcal{Q}(\mathcal{X}, \mathcal{E}) = \set{ Q \in \mathcal{P}(\mathcal{E}) \colon  \sum_{x' \in \mathcal{X}}Q(x, x') = \sum_{x' \in \mathcal{X}}Q(x', x) } .
\end{equation}
We further denote $P^\star$ for the uniquely defined \emph{time-reversal} of $P$, that verifies $P^\star(x, x') = \pi(x') P(x', x) / \pi(x)$, \GW{and write $Q^\star = Q \trn$ for its corresponding edge measure, where $\trn$ denotes matrix transposition}.
When $Q$ is symmetric (i.e. $Q^\star = Q $), the chain verifies the \emph{detailed balance equation}, 
$$\pi(x) P(x, x') = \pi(x') P(x', x),$$
i.e. $P^\star = P$, 
and we say that the Markov chain is \emph{reversible}. 
Observe that in this case, for $P$ irreducible over $\mathcal{E}$, the edge set must also be symmetric ($\mathcal{E} = \mathcal{E}^\star$, where $\mathcal{E}^\star \eqdef \set{(x, x') \in \mathcal{X}^2 \colon (x' , x) \in \mathcal{E}}$).
We write $\mathcal{W} \rev (\mathcal{X}, \mathcal{E})$ for the set of all reversible kernels 
that are irreducible over $(\mathcal{X}, \mathcal{E})$.
For $f, g \in \R^\mathcal{X}$, $\langle f, g \rangle_\pi \eqdef \sum_{x \in \mathcal{X}} f(x) g(x) \pi(x)$ defines an inner product.
We call $\ell_2(\pi)$ the corresponding Hilbert space.
The time-reversal is the adjoint operator of $P$ in $\ell_2(\pi)$,
i.e. the unique linear operator that verifies
$\langle P f, g \rangle_\pi = \langle  f, P^\star g \rangle_\pi, \forall f,g \in \R^\mathcal{X}$ (represented here as column vectors).
As a consequence, when $P$ is reversible, it is also self-adjoint in $\ell_2(\pi)$, and the spectrum of $P$ is real.

\subsection{Mixture family and exponential family}

For later convenience we consider the following three equivalent definitions of a mixture family.

\begin{definition}[m-family of transition kernels]
\label{definition:m-family-parametric}

We say that a family of irreducible transition kernels $\mathcal{V}_m$ 
is a mixture family (m-family) of irreducible transition kernels on $(\mathcal{X}, \mathcal{E})$ when one of the following (equivalent) statements $(i), (ii), (iii)$ holds.
\begin{enumerate}
    \item[$(i)$]  \citep{fujiwara2015foundations}
    There exist affinely independent $Q_0, Q_1, \dots, Q_d \in \mathcal{Q}(\mathcal{X}, \mathcal{E})$ such that
    $$\mathcal{V}_m = \set{ P_\xi \in \mathcal{W}(\mathcal{X}, \mathcal{E}) \colon Q_\xi = \sum_{i = 1}^{d} \xi^i Q_i + (1 - \sum_{i=1}^{d}\xi^i) Q_0, \xi \in \Xi },$$
    where $\Xi = \set{ \xi \in \R^d \colon Q_\xi(x,x') > 0, \forall (x,x') \in \mathcal{E} }$, and $Q_\xi$ is the edge measure that pertains to $P_\xi$.
    \item[$(ii)$] \citep[2.35]{amari2007methods}
     There exists $C, F_1, \dots, F_d \in \mathcal{F}(\mathcal{X}, \mathcal{E})$,
such that $C, C + F_1, \dots, C + F_d$ are affinely independent, $$\sum_{x,x'} C(x,x') = 1, \qquad \sum_{x,x'} F_i(x,x') = 0, \forall i \in [d],$$
and 
$$\mathcal{V}_m = \set{ P_\xi \in \mathcal{W}(\mathcal{X}, \mathcal{E}) \colon Q_\xi = C + \sum_{i =1}^{d} \xi^i F_i, \xi \in \Xi }$$ 
where $\Xi= \set{ \xi \in \R^d\colon Q_\xi(x,x') > 0, \forall (x,x') \in \mathcal{E} }$, and $Q_\xi$ is the edge measure that pertains to $P_\xi$.
    \item[$(iii)$] \citep[Section~4.2]{hayashi2014information} There exist $k \in \N, g_1, \dots, g_k \in \mathcal{F}(\mathcal{X}, \mathcal{E})$ and $c_1, \dots, c_k \in \R$, such that
$$\mathcal{V}_m = \set{ P \in \mathcal{W}(\mathcal{X}, \mathcal{E}) \colon \sum_{x,x'} Q(x,x') g_i(x,x') = c_i, \forall i \in [k]}.$$
\end{enumerate}
Note that $\Xi$ is an open set, $\xi$ is called the mixture parameter and $d$ is the dimension of the family $\mathcal{V}_m$.
\end{definition}

\begin{definition}[e-family of transition kernels]
\label{definition:e-family-parametric}
Let $\Theta \text{ \GW{$\subset$} } \R^d$,  be some connected parameter space that contains an open ball centered at $0$.
We say that the parametric family of irreducible transition kernels $$\mathcal{V}_e = \set{P_\theta \colon \theta = (\theta^1, \dots, \theta^d) \in \Theta}$$ 
is 
an exponential family (e-family) of transition kernels on $(\mathcal{X}, \mathcal{E})$ with natural parameter $\theta$, whenever
\begin{enumerate}
    \item[$(i)$] For all $\theta \in \Theta$,  $P_\theta \in \mathcal{W}(\mathcal{X}, \mathcal{E})$ .
    \item[$(ii)$] There exist functions
    \begin{equation*}
        \begin{split}
            K \colon \mathcal{X} \times \mathcal{X} &\to \R, \\
            R \colon \Theta \times \mathcal{X} &\to \R, \\
            g_1, \dots, g_d \colon \mathcal{X} \times \mathcal{X} &\to \R, \\
            \psi \colon \Theta &\to \R,
        \end{split}
    \end{equation*}
    such that $\forall (x,x', \theta) \in \mathcal{X}^2 \times \Theta$,
    \GW{
    \begin{equation}
        \label{eq:e-family-expression}
        \log P_\theta(x, x') =  K(x, x') + \sum_{i = 1}^{d} \theta^i g_i(x, x') + R(\theta, x') - R(\theta, x)  - \psi(\theta),
    \end{equation}}
    when $(x, x') \in \mathcal{E}$, and $P_\theta(x, x') = 0$ otherwise.
\end{enumerate}
\end{definition}
When fixing some $\theta \in \Theta$, we may later write for convenience $\psi_\theta$ for $\psi(\theta)$ and  $R_\theta$ for $R(\theta, \cdot) \in \R^\mathcal{X}$.
The carrier kernel $K$, the collection of generator functions $g_1, \dots, g_d$ and the parameter range $\Theta$ define the family entirely. The remaining functions $R_\theta$ and $\psi_\theta$ will be determined uniquely by PF theory, from the constraint of $P_\theta$ being row-stochastic
(see for example the proof of Proposition~\ref{proposition:time-reversal-e-family-is-e-family}).
In fact, we can define the mapping $\stoch$
that constructs a proper irreducible stochastic matrix from any linear operator defined by an irreducible matrix over
$(\mathcal{X}, \mathcal{E})$.
\begin{equation}
\label{equation:stochastic-version}
\begin{split}
    \stoch \colon \mathcal{F}_+(\mathcal{X},\mathcal{E}) &\to \mathcal{W}(\mathcal{X},\mathcal{E}) \\
    \widetilde{P}(x,x') &\mapsto P(x,x') = \frac{\widetilde{P}(x,x') v(x')}{\rho(\widetilde{P})v(x)}, 
\end{split}
\end{equation}
where $\rho(\widetilde{P})$ 
and $v$ are respectively the
PF root and right PF eigenvector of $\widetilde{P}$. 

\begin{remark}
In \citet{feigin1981conditional, kuchler1998exponential, hudson1982large, stefanov1995explicit, kuchler1989exponential, sorensen1986sequential}, 
an exponential family of transition kernels has the form
\begin{equation*}
    \begin{split}
        \log P_\theta(x, x') =  K(x, x') + \sum_{i = 1}^{d} \theta^i g_i(x, x') - \phi(\theta, x'),
    \end{split}
\end{equation*}
for some function $\phi \colon \Theta \times \mathcal{X} \to \R$.
Our Definition~\ref{definition:e-family-parametric} however follows the one of \citet{nagaoka2017exponential, hayashi2014information, watanabe2017}, that is endowed with a more compelling geometrical structure \citep[Remark~3]{hayashi2014information}.
\end{remark}

Following the information geometry philosophy \citep{amari2007methods}, we view the e-families or m-families that we defined, as $d$-dimensional submanifolds of $\mathcal{W}(\mathcal{X}, \mathcal{E})$ with corresponding chart maps $\theta, \xi \colon \mathcal{W}(\mathcal{X}, \mathcal{E}) \to \R^d$.
We can give more geometrical, 
parametrization-free definitions of e-families and m-families of 
irreducible transition kernel over $(\mathcal{X}, \mathcal{E})$, 
as autoparallel submanifolds of $\mathcal{W}(\mathcal{X}, \mathcal{E})$ with respect to the e-connection and m-connection \citep[Section~6]{nagaoka2017exponential}.
We will prefer, however, to mostly cast our analysis in the language of linear algebra, and defer analysis of the relationship with differential geometry concepts to Section~\ref{section:doubly-autoparallel}.
This choice is motivated by the existence of a known correspondence between affine functions over $\mathcal{E}$ and the manifold 
$\mathcal{W}(\mathcal{X}, \mathcal{E})$ \citep{nagaoka2017exponential} that we now describe.
Denote, 
\begin{equation}
\label{definition:anti-shift-functions}
\begin{split}
\mathcal{N}(\mathcal{X}, \mathcal{E}) \eqdef \bigg\{ &h \in \mathcal{F}(\mathcal{X}, \mathcal{E}) \colon \exists (c, f) \in (\R, \R^\mathcal{X}), \\ &\forall (x, x') \in \mathcal{E}, h(x, x') = f(x') - f(x) + c \bigg\}.
\end{split}
\end{equation}
Then
$\mathcal{F}(\mathcal{X}, \mathcal{E})$ defines a $\abs{\mathcal{E}}$-dimension vector space, while
$\mathcal{N}(\mathcal{X}, \mathcal{E})$ is an $\abs{\mathcal{X}}$-dimensional vector space \citep[Section~3]{nagaoka2017exponential}.
Introducing the mapping,
\begin{equation}
\label{equation:delta-diffeo}
\begin{split}
\Delta \colon \mathcal{F}(\mathcal{X}, \mathcal{E}) &\to \mathcal{W}(\mathcal{X}, \mathcal{E}) \\  
f &\mapsto \text{\GW{$\Delta(f) =
\mathfrak{s}(\exp \circ f)$}},
\end{split}
\end{equation}
such that
We see from the expression at \eqref{equation:stochastic-version} that $\Delta$ gives a diffeomorphism
from the quotient linear space 
$$\mathcal{G}(\mathcal{X}, \mathcal{E}) \eqdef \mathcal{F}(\mathcal{X}, \mathcal{E})/\mathcal{N}(\mathcal{X}, \mathcal{E})$$ 
to 
$\mathcal{W}(\mathcal{X}, \mathcal{E})$ and
a subset $\mathcal{V}$ of $\mathcal{W}(\mathcal{X}, \mathcal{E})$ is an e-family if and only if there exists an affine subspace $\mathcal{A}$ of the quotient space $\mathcal{G}(\mathcal{X}, \mathcal{E})$
such that $\mathcal{V} = \Delta(\mathcal{A})$ 
(we identify a coset with a  representative function in that coset).
In this case, the correspondence is one-to-one, and the dimension of the affine space and the submanifold coincide \citep[Theorem~2]{nagaoka2017exponential}.
In particular, this entails that $\dim \mathcal{W}(\mathcal{X}, \mathcal{E}) = \abs{\mathcal{E}} - \abs{\mathcal{X}}$ \citet[Corollary~1]{nagaoka2017exponential}.

\begin{remark} 
For Definition~\ref{definition:e-family-parametric}, unless stated otherwise, we will henceforth assume that the $g_i$ form an independent family in $\mathcal{G}(\mathcal{X}, \mathcal{E})$. This will ensure that the family is well-behaved in the sense of \citet[Lemma~4.1]{hayashi2014information}.
\end{remark}

\section{Time-reversal of parametric families}
\label{section:time-reversal-exponential-family}
We begin by extending the definition of a time-reversal to families of Markov chains. 

\begin{definition}[Time-reversal family]
We say that the family of irreducible transition kernels $\mathcal{V}^\star$ is the time-reversal of the family of irreducible transition kernels $\mathcal{V}$ when $\mathcal{V}^\star = \set{ P^\star \colon P \in \mathcal{V}}$,
where $P^\star$ denotes the time-reversal of $P$.
\end{definition}

We now state the fundamental fact that the quality of being an e-family or an m-family of transition kernels is 
closed under this time-reversal operation.

\begin{proposition}
\label{proposition:time-reversal-e-family-is-e-family}
The following statements hold.
\paragraph{Time reversal of m-family:}
Let $\mathcal{V}_m$ be an m-family over $(\mathcal{X}, \mathcal{E})$, then $\mathcal{V}_m^\star$ is an m-family over $(\mathcal{X}, \mathcal{E}^\star)$.
Furthermore, if $\mathcal{V}_m$ is the m-family generated by $Q_1, \dots, Q_d \in \mathcal{Q}(\mathcal{X}, \mathcal{E})$ (following the notation at Definition~\ref{definition:m-family-parametric}-$(i)$),
then the time-reversal m-family is given by 

$$\mathcal{V}_m^\star = \set{ P_\xi \in \mathcal{W}(\mathcal{X}, \mathcal{E}^\star) \colon Q_\xi = \sum_{i = 1}^{d} \xi^i Q_i^\star + (1 - \sum_{i=1}^{d}\xi_i) Q_0^\star, \xi \in \Xi^\star },$$
where $Q_\xi$ pertains to $P_\xi$ and
    with
    $$\Xi^\star = \set{\xi \in \R^d \colon Q_\xi(x,x') > 0, \forall (x,x') \in \mathcal{E}^\star} = \Xi.$$

\paragraph{Time reversal of e-family:}
Let $\mathcal{V}_e$ be an e-family over $(\mathcal{X}, \mathcal{E})$, then $\mathcal{V}_e^\star$ is an e-family over $(\mathcal{X}, \mathcal{E}^\star)$.
Furthermore, if $\mathcal{V}_e$ is the e-family generated by $K$ and $g_1, \dots, g_d$ (following the notation at Definition~\ref{definition:e-family-parametric}),
then the time-reversal e-family is given by $\mathcal{V}^\star = \set{P^\star_\theta : \theta \in \Theta}$ such that 
\begin{equation*}
\log P^\star_\theta(x, x') =  K(x', x) + \sum_{i = 1}^{d} \theta^i g_i(x', x) + L_\theta(x') - L_\theta(x)  - \psi_\theta,
    \end{equation*}
when $(x, x') \in \mathcal{E}^\star$, $P_\theta^\star(x, x') = 0$ otherwise, and where $L_\theta$ is the left PF eigenvector of the non-negative irreducible matrix 
$$\widetilde{P}_\theta^\star(x,x') = \exp \left( K(x', x) + \sum_{i = 1}^{d} \theta^i g_i(x', x) \right).$$ 
\end{proposition}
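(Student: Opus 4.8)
The plan is to treat the two assertions separately, but in both cases the strategy is the same: show that applying the $\star$ operation to the defining affine data of the family produces valid defining affine data of the same type over the reversed edge set $\mathcal{E}^\star$. For the m-family statement, I would start from characterization $(i)$ of Definition~\ref{definition:m-family-parametric}: $\mathcal{V}_m$ is generated by affinely independent edge measures $Q_0, \dots, Q_d \in \mathcal{Q}(\mathcal{X}, \mathcal{E})$. The key observation is that $Q \mapsto Q^\star = Q\trn$ is a linear bijection from $\mathcal{P}(\mathcal{E})$ onto $\mathcal{P}(\mathcal{E}^\star)$ that preserves the row-sum$=$column-sum constraint in \eqref{eq:equivalent-edge-measure-set}; hence $Q_i^\star \in \mathcal{Q}(\mathcal{X}, \mathcal{E}^\star)$. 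Transposition is a linear isomorphism, so it preserves affine independence, and it commutes with affine combinations, so $Q_\xi^\star = \sum_i \xi^i Q_i^\star + (1 - \sum_i \xi^i) Q_0^\star$ is exactly the edge measure of $P_\xi^\star$ (because $Q^\star$ is by definition the edge measure of $P^\star$). The positivity region $\Xi^\star$ coincides with $\Xi$ since $Q_\xi(x,x') = Q_\xi^\star(x',x)$, so positivity of one on $\mathcal{E}$ is positivity of the other on $\mathcal{E}^\star$. This gives the m-family claim and the explicit description.

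For the e-family statement, I would begin with the defining relation \eqref{eq:e-family-expression} for $P_\theta$, namely $\log P_\theta(x,x') = K(x,x') + \sum_i \theta^i g_i(x,x') + R_\theta(x') - R_\theta(x) - \psi_\theta$ on $\mathcal{E}$. Recall $P_\theta^\star(x,x') = \pi_\theta(x') P_\theta(x',x)/\pi_\theta(x)$, where $\pi_\theta$ is the stationary distribution of $P_\theta$. Substituting the expression for $\log P_\theta(x',x)$ gives
\begin{equation*}
\log P_\theta^\star(x,x') = K(x',x) + \sum_{i=1}^d \theta^i g_i(x',x) + \big(R_\theta(x) - R_\theta(x')\big) + \log\pi_\theta(x') - \log\pi_\theta(x) - \psi_\theta,
\end{equation*}
valid for $(x,x') \in \mathcal{E}^\star$. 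Collecting the ``anti-shift'' terms, set $L_\theta(x) \eqdef \log\pi_\theta(x) - R_\theta(x)$; then the bracketed and log-$\pi$ terms combine to $L_\theta(x') - L_\theta(x)$, yielding precisely the claimed form with carrier $K(x',x)$, generators $g_i(x',x)$, and the same $\psi_\theta$. It remains to identify $L_\theta$ with the stated PF eigenvector. For this I would invoke the construction of $R_\theta, \psi_\theta$ from PF theory (as in the $\stoch$ map \eqref{equation:stochastic-version}): writing $\widetilde P_\theta(x,x') = \exp(K(x,x') + \sum_i \theta^i g_i(x,x'))$, stochasticity forces $\rho(\widetilde P_\theta) = e^{\psi_\theta}$ and $R_\theta = \log v_\theta$ with $v_\theta$ the right PF eigenvector, i.e. $\widetilde P_\theta v_\theta = e^{\psi_\theta} v_\theta$. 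One then checks, using $\pi_\theta P_\theta = \pi_\theta$ written out entrywise, that the row vector $u_\theta$ with $u_\theta(x) = \pi_\theta(x)/v_\theta(x) = e^{L_\theta(x)}$ satisfies $u_\theta \widetilde P_\theta = e^{\psi_\theta} u_\theta$; since $\widetilde P_\theta = \widetilde P_\theta^{\star\,\trn}$ after the index swap, $u_\theta$ is the left PF eigenvector of $\widetilde P_\theta^\star$, as claimed. Finally, one must confirm this $\mathcal{V}_e^\star$ genuinely meets Definition~\ref{definition:e-family-parametric}: the $g_i(x',x)$ are functions on $\mathcal{E}^\star$, the parameter domain $\Theta$ is unchanged, and membership $P_\theta^\star \in \mathcal{W}(\mathcal{X}, \mathcal{E}^\star)$ holds since the time-reversal of an irreducible kernel supported on $\mathcal{E}$ is irreducible and supported on $\mathcal{E}^\star$.

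The genuinely delicate step is the PF-eigenvector identification in the last paragraph — verifying that $L_\theta = \log\pi_\theta - R_\theta$ is exactly the left PF eigenvector of $\widetilde P_\theta^\star$ and that the correct normalization/eigenvalue $e^{\psi_\theta}$ is inherited. Everything else (linearity and bijectivity of transposition, preservation of the constraint set and of affine independence, the bookkeeping that merges shift terms into $L_\theta$) is routine. I would also remark that the e-family half can alternatively be derived more abstractly from the $\Delta$-correspondence: the map $f(x,x') \mapsto f(x',x)$ is a linear automorphism of $\mathcal{F}(\mathcal{X}, \mathcal{E}) \to \mathcal{F}(\mathcal{X}, \mathcal{E}^\star)$ that sends $\mathcal{N}(\mathcal{X}, \mathcal{E})$ onto $\mathcal{N}(\mathcal{X}, \mathcal{E}^\star)$ and intertwines $\Delta$ with the $\star$ operation (since $P^\star$ is the $\ell_2(\pi)$-adjoint), hence carries affine subspaces of the quotient to affine subspaces — so $\mathcal{V}_e^\star = \Delta(\mathcal{A}^\star)$ is again an e-family; but the computational proof above additionally pins down the explicit parametrization, which is what the statement asks for.
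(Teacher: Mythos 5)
Your proof is correct and follows essentially the same route as the paper: the m-family half is handled by transposing edge measures (linearity, affine independence, and the constraint \eqref{eq:equivalent-edge-measure-set} are preserved), and the e-family half by substituting the exponential form into $P_\theta^\star(x,x') = \pi_\theta(x')P_\theta(x',x)/\pi_\theta(x)$ and identifying the anti-shift term through Perron--Frobenius theory. The only (immaterial) difference is the direction of the PF bookkeeping --- the paper first introduces the left PF eigenvector $\exp[L_\theta]$ of $\widetilde{P}_\theta$ and then sets $\pi_\theta \propto \exp[L_\theta + R_\theta]$, whereas you define $L_\theta = \log \pi_\theta - R_\theta$ and verify $u_\theta \widetilde{P}_\theta = e^{\psi_\theta} u_\theta$ with $u_\theta = \exp[L_\theta]$; just note that this exhibits $u_\theta$ as the left PF eigenvector of $\widetilde{P}_\theta$, equivalently the right PF eigenvector of $\widetilde{P}_\theta^\star = \widetilde{P}_\theta\trn$, which is what the statement (and the paper's proof) intends.
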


\begin{proof}
Since the edge measure $Q^\star_\xi$ of the time-reversal $P^\star_\xi$ is the transpose of $Q_\xi$ corresponding to $P_\xi$, 
it is easy to obtain the expression of the time-reversal, and to see that $\mathcal{V}_m^\star$ is a mixture family.
It remains to show that this also holds true for e-families.
From the definition of an exponential family \GW{\eqref{eq:e-family-expression}},
and the requirement that $P_\theta$ be row-stochastic, 
it must be that for any $x \in \mathcal{X}$,
\begin{equation*}
\sum_{x' \in \mathcal{X}} \exp(K(x, x') + \sum_{i = 1}^{d}\theta^i g_i(x, x')) e^{R_\theta(x')} = e^{\psi_\theta} e^{R_\theta(x)},
\end{equation*}
or more concisely, writing $\widetilde{P}_\theta(x,x') = \exp(K(x, x') + \sum_{i = 1}^{d}\theta^i g_i(x, x'))$ for $x, x' \in \mathcal{E}$ and $\widetilde{P}_\theta(x,x') = 0$ otherwise, 
$\widetilde{P}_\theta \exp[R_\theta] = e^{\psi_\theta} \exp [R_\theta]$.
By positivity of the exponential function, the vector $\exp [R_\theta] \in \R^{\mathcal{X}}$ is positive. 
Thus, from  the PF theorem, $e^{\psi_\theta}$ corresponds to the spectral radius of $\widetilde{P}_\theta$, and $\exp[R_\theta]$ its (right) associated eigenvector.
There must therefore also exist a left positive eigenvector, which we denote by $\exp[L_\theta]$,
such that 
$$\exp[L_\theta] \widetilde{P}_\theta = e^{\psi_\theta} \exp[L_\theta].$$
Defining the positive normalized measure
\begin{equation}
\label{equation:parametric-stationary}
    \begin{split}
    \pi_\theta(x) \eqdef \frac{\exp(L_\theta(x) + R_\theta(x))}{\sum_{x'' \in \mathcal{X}} \exp(L_\theta(x'') + R_\theta(x''))},
    \end{split}
\end{equation}
it is easily verified that $\pi_\theta$ is the stationary distribution of $P_\theta$.
Notice that $\theta$, $K$ and $g_i$ determine uniquely 
$L_\theta, R_\theta, \psi_\theta $ and $\pi_\theta$ by the PF theorem.
Recall that the adjoint of a transition kernel $P$ can be written $P^\star(x, x') = \pi(x')P(x', x)/\pi(x)$, thus we can compute the time-reversal as
\begin{equation*}
\begin{split}
P_\theta(x', x) \frac{\pi_\theta(x')}{\pi_\theta(x)} 
&= \exp\left( K(x', x) + L_\theta(x') - L_\theta(x) + \sum_{i = 1}^{d}\theta^i g_i(x', x) - \psi_\theta \right), \\
\end{split}
\end{equation*}
when $(x', x) \in \mathcal{E}$, and $0$ for $(x', x) \not \in \mathcal{E}$.
The requirements of Definition~\ref{definition:e-family-parametric} for an e-family are all fulfilled,
which concludes the proof.
\end{proof}

\begin{remark}
\GW{Recall that for a distribution $\mu \in \mathcal{P}(\mathcal{X})$, we can by \emph{exponential change of measure} -- also known as \emph{exponential tilting} -- construct the natural exponential family of $\mu$:
$$\mu_\theta(x) = \mu(x)\exp( \theta x- A(\theta)),$$
where $A(\theta)$ is a normalization function that ensures $\mu_\theta \in \mathcal{P}(\mathcal{X})$ for all $\theta \in \R$. 
The idea of exponential change of measure for distributions can be traced back to Chernoff \citep{chernoff1952measure}, and was later termed tilting \citep{gallager1968information, van1981maximum}.
Similarly, given some function $g \colon \mathcal{X}^2 \to \R$ we can tilt an irreducible kernel $P$ (e.g. \citet{miller1961convexity}), by first constructing $\tilde{P}_\theta(x,x') = P(x,x') e^{\theta g(x,x')}$, and then rescaling the newly obtained irreducible matrix \footnote{\GW{Interestingly, 
 the large deviation rate of $g$ is given by the convex conjugate 
of the log-PF root of $\tilde{P}_\theta$ \citep[Chapter~3]{dembo2011large}.}} with the mapping $\stoch$. When $\theta = 0$, notice that we recover the original $P$.
}
But while in our definition,
$$P_\theta = \stoch (P \circ \exp[\theta g]) =  \frac{1}{\rho(P \circ \exp[\theta g])} \diag v_\theta^{-1} (P \circ \exp[\theta g]) \diag v_\theta$$ 
denotes the kernel tilted \GW{involving}
the right PF eigenvector $v_\theta$, we could alternatively define the Markov kernel $P'_\theta$ by tilting $P$ \GW{with} the left PF eigenvector $u_\theta$:
$$P'_\theta = \frac{1}{\rho(P \circ \exp[\theta g])} \diag u_\theta^{-1} (P \circ \exp[\theta g]) \trn \diag u_\theta.$$ 
Observe that the right and left tilted versions of $P$ with
identical $\theta$ share the same stationary distribution $\pi_\theta \propto u_\theta \circ v_\theta$  \eqref{equation:parametric-stationary}
and that they are in fact each other's time-reversal ($P'_\theta = P^\star_\theta$), i.e. they form a pair of adjoint linear operators over the space $\ell_2(\pi_\theta)$.
\end{remark}

\section{Reversible exponential families}
\label{section:reversible-exponential-family}
The previous section extended the time-reversal operation to parametric 
families of transition kernels. It seems then natural to investigate fixed points, i.e. 
parametric families that remain invariant under this transformation.
We say that an irreducible e-family $\mathcal{V}(\mathcal{X}, \mathcal{E})$ is reversible when $P$ is reversible $\forall P \in \mathcal{V}(\mathcal{X}, \mathcal{E})$.
In this case, $\mathcal{E}$ coincides with $\mathcal{E}^\star$  and $\mathcal{V}^\star(\mathcal{X}, \mathcal{E}^\star)$ with $\mathcal{V}(\mathcal{X}, \mathcal{E})$.
Observe first that an e-family obtained from tilting a reversible $P$
is not generally reversible, making it clear that the reversible nature 
of the family cannot be determined solely by the properties of the carrier kernel $K$.
It is however easy to see that an e-family is reversible when $K$ and all the generator functions $g_1, \dots, g_d$ are symmetric. 
Moreover, for a state space $\mathcal{X}$ of size $2$,
    any exponential family would be reversible regardless of symmetry, 
    showing that this condition is not always necessary.
In this section, we give a complete characterization of this invariant set. 
Additionally, we explore the algorithmic cost of checking whether this property is verified from the description of the carrier kernel and generators of a given e-family.
Before diving into the general theory of reversible e-families, let us consider the following simple examples.
\begin{example}[Lazy random walk on the $m$-cycle] \label{example:lazy-random-walk}
\GW{For ${\cal X}=[m]$, and 
$$\mathcal{E}_{\mathsf{cy}} = \set{ (i,j) \in [m]^2 \colon \abs{i - j} \mod m - 1 \in \set{0, 1} },$$}
let 
\begin{equation*}
\begin{split}
P_\theta(x,x^\prime) &= \exp\bigg( \theta_1 \sum_{i=1}^m \delta_i(x)\delta_i(x^\prime) 
\\ &+ \theta_2 \sum_{i=1}^m \big(\delta_i(x)\delta_{i+1}(x^\prime) - \delta_{i+1}(x)\delta_i(x^\prime) \big) - \psi(\theta) \bigg),    
\end{split}
\end{equation*}
where $\psi(\theta)=\log\big( e^{\theta_1}+e^{\theta_2} + e^{-\theta_2}\big)$ and $\delta_{m+1}=\delta_1$.
This e-family $\{P_\theta \}_{\theta \in \mathbb{R}^2}$ corresponds to the set of biased lazy random walks on the $m$-cycle given by
\begin{align*}
P_\theta(x,x) &= \frac{e^{\theta_1}}{e^{\theta_1}+e^{\theta_2}+e^{-\theta_2}}, \\
P_\theta(x,x+1) &= \frac{e^{\theta_2}}{e^{\theta_1}+e^{\theta_2}+e^{-\theta_2}}, \\
P_\theta(x+1,x) &= \frac{e^{-\theta_2}}{e^{\theta_1}+e^{\theta_2}+e^{-\theta_2}}.
\end{align*}
Observe that \GW{$P^\star_{(\theta_1, \theta_2)} = P_{(\theta_1, - \theta_2)}$, and thus} $P_\theta$ is not a reversible e-family. The subfamily $\{ P_\theta : \theta_1 \in \mathbb{R}, \theta_2=0 \}$, however,
i.e. unbiased lazy random walks on the $m$-cycle, \GW{form a}
a reversible e-family.
\end{example}

\begin{example}[Birth-and-death chains]
For ${\cal X}=[m]$ and ${\cal E}_{\mathsf{bd}}=\{ (i,j) : |i-j| \le 1\}$, a Markov kernel having its support
on ${\cal E}_{\mathsf{bd}}$ is referred to as a birth-and-death chain. Since every birth-and-death chain is
reversible \cite[Section 2.5]{levin2009markov}, ${\cal W}({\cal X},{\cal E}_{\mathsf{bd}})$ is a reversible e-family. 
\end{example}

We first recall Kolmogorov's characterization of reversibility, 
which will be instrumental in our argument.
For $\mathcal{E} \text{ \GW{$\subset$} } \mathcal{X}^2$ such that $(\mathcal{X}, \mathcal{E})$ is a strongly connected directed graph, we write $\Gamma(\mathcal{X}, \mathcal{E})$ for the set of \emph{finite directed closed paths} in the graph $(\mathcal{X}, \mathcal{E})$. 
\GW{Formally, we treat $\gamma$ as a map $[n] \to \mathcal{E}$ such that $\gamma(t) = (x_t, x_{t+1})$ with $x_{n+1} = x_1$ and we write $\abs{\gamma} = n$ for the length of the path. For each $\gamma \in \Gamma(\mathcal{X}, \mathcal{E})$, we also introduce the reverse closed path $\gamma^\star \in \Gamma(\mathcal{X}, \mathcal{E}^\star)$ given by $\gamma^\star(t) = (x_{t+1}, x_t)$.
Namely, if $\gamma \in \Gamma(\mathcal{X}, \mathcal{E})$,
we can write $\gamma$ informally as a succession of edges such that the starting and finishing states agree (i.e. as an element of $\mathcal{E}^n$). 
$$\gamma = ((x_1, x_2), (x_2, x_3), \dots, (x_{n - 1}, x_n) ,(x_n, x_1) ), x_i \in \mathcal{X}, \forall i \in [n].$$
Note that $\gamma$ is not necessarily a \emph{cycle}, i.e. in our definition, multiple occurrences of the same point of the space are allowed.}

\begin{theorem}[Kolmogorov's criterion \citep{kolmogorov1936theorie}]
\label{theorem:kolmgorov-criterion}
Let $P$ irreducible over $(\mathcal{X}, \mathcal{E})$.
$P$ is reversible if and only if for all $\gamma \in \Gamma(\mathcal{X}, \mathcal{E})$,
\GW{$$\prod_{t = 1 }^{\abs{\gamma}} P(\gamma(t)) = \prod_{t = 1}^{\abs{\gamma}} P(\gamma^\star(t)).$$}
\end{theorem}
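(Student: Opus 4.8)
\section*{Proof proposal}

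The plan is to prove both implications directly from the detailed balance equation, using strong connectivity to transport a candidate stationary distribution around the graph.

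For the forward implication, I would assume $P$ is reversible with stationary distribution $\pi \in \mathcal{P}_+(\mathcal{X})$, so that $\pi(x) P(x, x') = \pi(x') P(x', x)$ for every pair $(x, x')$. Given $\gamma \in \Gamma(\mathcal{X}, \mathcal{E})$ with $\gamma(t) = (x_t, x_{t+1})$ and $x_{\abs{\gamma}+1} = x_1$, I would multiply the detailed balance identities along the edges of $\gamma$, obtaining $\prod_{t = 1}^{\abs{\gamma}} \pi(x_t) P(\gamma(t)) = \prod_{t = 1}^{\abs{\gamma}} \pi(x_{t+1}) P(\gamma^\star(t))$ since $P(\gamma^\star(t)) = P(x_{t+1}, x_t)$. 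As $\gamma$ is closed, the factors $\prod_t \pi(x_t)$ and $\prod_t \pi(x_{t+1})$ coincide and are strictly positive, so they cancel and the claimed cycle identity follows.

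For the converse, I would fix a reference state $x_0 \in \mathcal{X}$ and, for each $x \in \mathcal{X}$, pick a directed path $x_0 = y_0, y_1, \dots, y_k = x$ in $(\mathcal{X}, \mathcal{E})$ (which exists by strong connectivity), and set $\widetilde{\pi}(x) = \prod_{j = 1}^{k} P(y_{j-1}, y_j) / P(y_j, y_{j-1})$, with $\widetilde{\pi}(x_0) = 1$. A preliminary observation is that the hypothesis forces $\mathcal{E} = \mathcal{E}^\star$: for any $(x, x') \in \mathcal{E}$, completing this edge into a closed path via a return path from $x'$ to $x$ makes $\prod_t P(\gamma(t)) > 0$, hence $\prod_t P(\gamma^\star(t)) > 0$, so in particular $P(x', x) > 0$; this also makes all ratios above well-defined. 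The key step is to show $\widetilde{\pi}(x)$ is independent of the chosen path: two directed paths $p, p'$ from $x_0$ to $x$ can be concatenated with the reverse traversal of $p'$ (a path in $\mathcal{E} = \mathcal{E}^\star$) into a closed path $\gamma \in \Gamma(\mathcal{X}, \mathcal{E})$, and the cycle identity for $\gamma$ rearranges exactly into the equality of the two ratio products. Normalizing $\widetilde{\pi}$ to $\pi \in \mathcal{P}_+(\mathcal{X})$, I would then verify detailed balance: for $(x, x') \in \mathcal{E}$, extending a path reaching $x$ by the edge $(x, x')$ gives $\widetilde{\pi}(x') / \widetilde{\pi}(x) = P(x, x') / P(x', x)$, i.e. $\pi(x) P(x, x') = \pi(x') P(x', x)$; for $(x, x') \notin \mathcal{E}$ both sides vanish since $\mathcal{E} = \mathcal{E}^\star$. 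Finally, detailed balance implies $\pi P = \pi$, so $\pi$ is the unique stationary distribution of $P$ and $P$ is reversible.

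The main obstacle is the well-definedness of $\widetilde{\pi}$, i.e. path-independence; everything else is bookkeeping. The subtlety there is that the reversed traversal of a forward path a priori lives in $\mathcal{E}^\star$, so the argument must first establish $\mathcal{E} = \mathcal{E}^\star$ before closed paths built from forward-and-backward concatenations may be fed into the hypothesis. Allowing repeated vertices and edges in the elements of $\Gamma(\mathcal{X}, \mathcal{E})$, as in the definition recalled above, is precisely what makes this concatenation argument legitimate.
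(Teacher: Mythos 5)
Your proof is correct. The paper itself does not prove this statement---it is quoted as Kolmogorov's classical criterion with a citation to \citet{kolmogorov1936theorie}---so there is no internal argument to compare against; what you give is the standard proof and it goes through. The forward direction (multiplying the detailed balance identities $\pi(x_t)P(x_t,x_{t+1})=\pi(x_{t+1})P(x_{t+1},x_t)$ around a closed path and cancelling the cyclically shifted, strictly positive product of $\pi$-values) is exactly right, and in the converse you correctly identify and handle the one real subtlety: deducing $\mathcal{E}=\mathcal{E}^\star$ from the cycle condition before reversed traversals may be concatenated into elements of $\Gamma(\mathcal{X},\mathcal{E})$, which is what makes the path-independence of $\widetilde{\pi}$ (and hence detailed balance, stationarity by summation, and reversibility via uniqueness of the stationary distribution of an irreducible kernel) legitimate. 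The fact that the paper's $\Gamma(\mathcal{X},\mathcal{E})$ allows repeated vertices, which you flag explicitly, is indeed what licenses the forward-and-backward concatenation without any cycle-decomposition argument.
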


\begin{example}
When $\abs{\mathcal{X}} = 2$, all chains are reversible. 
For  $\abs{\mathcal{X}} = 3$, only one 
equation needs to be verified for $P$ to be reversible: 
$$P(1,2)P(2,3)P(3,1) = P(1,3)P(3,2)P(2,1).$$
\end{example}

We now extend the definition of reversibility to arbitrary irreducible functions $\mathcal{F}_+(\mathcal{X}, \mathcal{E})$ (non-negative on $\mathcal{X}^2$ and positive exactly on $\mathcal{E}$) based on Kolmogorov's criterion, and further introduce the concept of log-reversibility for $\mathcal{F}(\mathcal{X}, \mathcal{E})$, 
that considers sums instead of products.

\begin{definition}[Reversible and log-reversible functions]
\label{definition:revesible-function}
Let $\mathcal{E} \text{ \GW{$\subset$} } \mathcal{X}^2$ such that $\mathcal{E} = \mathcal{E}^\star$. 
\begin{description}
    \item[reversible:] A function $h \in \mathcal{F}_+(\mathcal{X}, \mathcal{E})$ is reversible whenever it satisfies that,
\GW{$$\prod_{t = 1 }^{\abs{\gamma}} h(\gamma(t)) = \prod_{t = 1}^{\abs{\gamma}} h(\gamma^\star(t)).$$}
for all finite directed closed paths $\gamma \in \Gamma(\mathcal{X}, \mathcal{E})$.
    \item[log-reversible:] A function $h \in \mathcal{F}(\mathcal{X}, \mathcal{E})$ is log-reversible whenever it satisfies that,
\GW{$$\sum_{t = 1 }^{\abs{\gamma}} h(\gamma(t)) = \sum_{t = 1}^{\abs{\gamma}} h(\gamma^\star(t)).$$}
for all finite directed closed paths $\gamma \in \Gamma(\mathcal{X}, \mathcal{E})$.
\end{description}

\end{definition}
Remark: These definitions do not rely on 
connectedness properties of $\mathcal{E}$ per se, 
but we will assume irreducibility nonetheless.
Observe that when $h$ is represented by an irreducible row-stochastic matrix,
the definition of reversibility of $h$ as a function
and as a Markov operator coincide
by Kolmogorov's criterion (Theorem~\ref{theorem:kolmgorov-criterion}).
Clearly, for $h \in \mathcal{F}(\mathcal{X}, \mathcal{E})$, $\exp[h]$ being reversible is equivalent to $h$ being log-reversible.
We could endow the set of positive 
reversible functions on $\mathcal{E}$ with 
a group structure by
considering the standard multiplicative operation on functions.
We will choose however (Lemma~\ref{lemma:subspace-log-reversible}), 
to rather construct and focus on the vector space of log-reversible functions.

\begin{lemma}
\label{lemma:rank-1-reversible}
Let \GW{$h \in \mathcal{F}_+(\cal X, \mathcal{E})$} 
such that $\rank(h) = 1$. Then $h$ is a reversible function.
\end{lemma}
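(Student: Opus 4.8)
The plan is to use the fact that a rank-one matrix is an outer product, and then verify the Kolmogorov-type identity of Definition~\ref{definition:revesible-function} directly along an arbitrary closed path. Since $\rank(h) = 1$, first write $h(x,x') = u(x)\,v(x')$ for suitable $u, v \in \R^\mathcal{X}$. The support of such a function is the combinatorial rectangle $\operatorname{supp}(u) \times \operatorname{supp}(v)$, and since it equals $\mathcal{E}$ with $(\mathcal{X},\mathcal{E})$ strongly connected and $m > 1$, no proper rectangle is possible: every state must have an outgoing and an incoming edge, so $\operatorname{supp}(u) = \operatorname{supp}(v) = \mathcal{X}$, forcing $\mathcal{E} = \mathcal{X}^2$. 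Moreover, on this support all products $u(x)v(x')$ share the same sign, so after flipping signs of both factors if necessary we may assume $u$ and $v$ are entrywise positive.

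Next, I would fix an arbitrary $\gamma \in \Gamma(\mathcal{X}, \mathcal{E})$, say $\gamma(t) = (x_t, x_{t+1})$ for $t \in [n]$ with $|\gamma| = n$ and the cyclic convention $x_{n+1} = x_1$, so that $\gamma^\star(t) = (x_{t+1}, x_t)$. The computation is then a one-liner: using the factorization,
$$\prod_{t=1}^{n} h(\gamma(t)) = \prod_{t=1}^{n} u(x_t)\,v(x_{t+1}) = \Bigl(\prod_{t=1}^{n} u(x_t)\Bigr)\Bigl(\prod_{t=1}^{n} v(x_{t+1})\Bigr) = \prod_{t=1}^{n} u(x_t)\,v(x_t),$$
where the last equality holds because $t \mapsto x_{t+1}$ is a cyclic permutation of $t \mapsto x_t$ on $[n]$, so the factor $\prod_t v(x_{t+1})$ equals $\prod_t v(x_t)$. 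Applying the same reasoning to $\gamma^\star$,
$$\prod_{t=1}^{n} h(\gamma^\star(t)) = \prod_{t=1}^{n} u(x_{t+1})\,v(x_t) = \Bigl(\prod_{t=1}^{n} u(x_{t+1})\Bigr)\Bigl(\prod_{t=1}^{n} v(x_t)\Bigr) = \prod_{t=1}^{n} u(x_t)\,v(x_t).$$
The two products coincide, which is exactly the condition for $h$ to be a reversible function.

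There is no genuine obstacle here: the only point requiring a moment of care is the bookkeeping step that a rank-one function supported on a strongly connected $\mathcal{E}$ must in fact be positive everywhere (equivalently, that $\mathcal{E} = \mathcal{X}^2$ and the outer-product factors have no zero entries), which guarantees that all the terms appearing in the products above are well defined and nonzero for every closed path. Once the outer-product form is available, the cyclic cancellation makes both path-products equal to $\prod_{t} u(x_t)v(x_t)$, which is manifestly invariant under reversing the path. Alternatively, one could phrase the same cancellation as checking Kolmogorov's criterion (Theorem~\ref{theorem:kolmgorov-criterion}) after passing to the stochastic version $\stoch(h)$, but the direct argument above is cleaner and avoids normalization.
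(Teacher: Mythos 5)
Your proof is correct and follows essentially the same route as the paper: factor the rank-one function as an outer product $h(x,x') = u(x)v(x')$ and observe that, along any closed path, the cyclic shift $t \mapsto t+1$ makes the forward and reverse products both equal to $\prod_t u(x_t)v(x_t)$. The extra bookkeeping you add (that strong connectedness forces $\mathcal{E} = \mathcal{X}^2$ and the factors may be taken positive) is sound but not needed for the cancellation, which the paper states directly.
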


\begin{proof}
\GW{
Consider a closed path $\gamma \in \Gamma(\mathcal{X}, \mathcal{E})$. Writing $h = u_h \trn v_h$, 
we successively have that
\begin{equation*}
\begin{split}
\prod_{t = 1 }^{\abs{\gamma}} h(\gamma(t)) &= \prod_{t = 1}^{\abs{\gamma}} u_h(x_t)v_h(x_{t+1}) = \prod_{t = 1}^{\abs{\gamma}} u_h(x_t)  \prod_{t = 1}^{\abs{\gamma}} v_h(x_t) = \prod_{t = 1}^{\abs{\gamma}} h(\gamma^\star(t)).\\
\end{split}
\end{equation*}
}
\end{proof}

\begin{theorem}[Characterization of reversible e-family]
\label{theorem:characterization-reversible-e-family}
Let $\mathcal{V}(\mathcal{X}, \mathcal{E})$ be an irreducible e-family of Markov chains, with natural parametrization $\theta$, generated by $K$ and $(g_i)_{i \in [d]}$. 
The following two statements are equivalent.
\begin{enumerate}
    \item[$(i)$] $\mathcal{V}(\mathcal{X}, \mathcal{E})$ is reversible.
    \item[$(ii)$] $\mathcal{E} = \mathcal{E}^\star$ and $\mathcal{V}(\mathcal{X}, \mathcal{E})$ is such 
    that the carrier kernel $K$ and generator functions $g_i, \forall i \in [d]$ are all log-reversible functions.
\end{enumerate}
\end{theorem}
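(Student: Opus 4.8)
The plan is to prove $(ii) \Rightarrow (i)$ directly and $(i) \Rightarrow (ii)$ by a specialization argument using the freedom in the natural parameter $\theta$. For the direction $(ii) \Rightarrow (i)$, fix $\theta \in \Theta$ and recall from Definition~\ref{definition:e-family-parametric} that $\log P_\theta(x,x') = K(x,x') + \sum_i \theta^i g_i(x,x') + R_\theta(x') - R_\theta(x) - \psi_\theta$ on $\mathcal{E}$. The summand $R_\theta(x') - R_\theta(x) - \psi_\theta$ is of the form $f(x') - f(x) + c$, hence lies in $\mathcal{N}(\mathcal{X},\mathcal{E})$, and every element of $\mathcal{N}(\mathcal{X},\mathcal{E})$ is log-reversible: around a closed path $\gamma$, the terms $f(x_{t+1}) - f(x_t)$ telescope to $0$ and the constants $c$ sum to $\abs{\gamma}c$, exactly as they do along $\gamma^\star$. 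Since $K$ and the $g_i$ are log-reversible by hypothesis and log-reversible functions form a vector space (this is the content of the announced Lemma~\ref{lemma:subspace-log-reversible}, which I may invoke, or I can just check additivity and scalar-multiplication directly from Definition~\ref{definition:revesible-function}), the full sum $\log P_\theta$ is log-reversible, i.e. $P_\theta$ is a reversible function. As $P_\theta$ is represented by an irreducible row-stochastic matrix, the Remark following Definition~\ref{definition:revesible-function} tells us reversibility of $P_\theta$ as a function coincides with reversibility as a Markov operator, so every $P_\theta$ is reversible and $\mathcal{V}(\mathcal{X},\mathcal{E})$ is a reversible e-family; the equality $\mathcal{E} = \mathcal{E}^\star$ for a reversible irreducible kernel was observed in Section~\ref{section:preliminaries}.

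For $(i) \Rightarrow (ii)$, assume every $P_\theta$ is reversible. First, $\mathcal{E} = \mathcal{E}^\star$ is immediate. Now fix an arbitrary closed path $\gamma \in \Gamma(\mathcal{X},\mathcal{E})$ and define $\Phi(\theta) \eqdef \sum_{t=1}^{\abs{\gamma}} \log P_\theta(\gamma(t)) - \sum_{t=1}^{\abs{\gamma}} \log P_\theta(\gamma^\star(t))$. Substituting the e-family expression and using that the $\mathcal{N}$-part and $-\psi_\theta$ cancel between $\gamma$ and $\gamma^\star$ exactly as above, we get
\begin{equation*}
\Phi(\theta) = \Big( \sum_{t} K(\gamma(t)) - \sum_{t} K(\gamma^\star(t)) \Big) + \sum_{i=1}^{d} \theta^i \Big( \sum_{t} g_i(\gamma(t)) - \sum_{t} g_i(\gamma^\star(t)) \Big),
\end{equation*}
an affine function of $\theta$. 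By Kolmogorov's criterion (Theorem~\ref{theorem:kolmgorov-criterion}), reversibility of $P_\theta$ forces $\Phi(\theta) = 0$ for all $\theta \in \Theta$. Since $\Theta$ contains an open ball around $0$ (Definition~\ref{definition:e-family-parametric}), an affine function vanishing on an open set vanishes identically with all coefficients zero: the constant term gives $\sum_t K(\gamma(t)) = \sum_t K(\gamma^\star(t))$, and the coefficient of each $\theta^i$ gives $\sum_t g_i(\gamma(t)) = \sum_t g_i(\gamma^\star(t))$. As $\gamma$ was arbitrary, Definition~\ref{definition:revesible-function} shows $K$ and each $g_i$ are log-reversible, which is $(ii)$.

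The one point that needs care — and what I regard as the main obstacle — is the claim that the natural parameters can genuinely be varied independently over an open set while keeping the \emph{same} $K$ and $g_i$, so that the vanishing of $\Phi$ on that open set really does kill each coefficient separately. This is guaranteed by the convention recorded in the Remark before Section~\ref{section:time-reversal-exponential-family} that the $g_i$ are independent in $\mathcal{G}(\mathcal{X},\mathcal{E})$ together with the requirement that $\Theta$ contain an open ball at $0$; one should note that independence of the $g_i$ in the quotient is not even needed for this implication — mere openness of $\Theta$ suffices, since we are only asserting that a single affine functional of $\theta$ vanishes. The reverse direction has no real obstacle beyond the bookkeeping of telescoping sums, already handled once and for all in the $(ii) \Rightarrow (i)$ argument. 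I would present $(ii)\Rightarrow(i)$ first, since the telescoping/cancellation computation it isolates is then reused verbatim in $(i)\Rightarrow(ii)$.
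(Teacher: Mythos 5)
Your proof is correct and follows essentially the same route as the paper: apply Kolmogorov's criterion to an arbitrary $P_\theta$, let the $R_\theta$-terms telescope and the $\psi_\theta$-terms cancel around the closed path, and reduce reversibility of the family to an identity affine in $\theta$, from which both directions follow. The only (cosmetic, arguably slightly cleaner) difference is that you kill the coefficients by noting an affine function vanishing on an open ball vanishes identically, whereas the paper substitutes $\theta = 0$ and $\theta^i = \delta_i(j)$.
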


\begin{proof}
We apply Kolmogorov's criterion to some arbitrary family member.
Let $\gamma$ be some finite closed path in $(\mathcal{X}, \mathcal{E})$,
\GW{$$\prod_{t = 1 }^{\abs{\gamma}} P_\theta(\gamma(t)) = \prod_{t = 1}^{\abs{\gamma}} P_\theta(\gamma^\star(t)).$$}
Rewriting the left-hand side,
\GW{
\begin{equation*}
\begin{split}
&\prod_{t =1 }^{\abs{\gamma}} P_\theta(\gamma(t)) \\
&= \prod_{t = 1}^{\abs{\gamma}} \exp \left( K(x_t, x_{t+1}) + R(\theta, x_{t+1}) - R(\theta, x_t) + \sum_{i = 1}^{d}\theta^i g_i(x_t, x_{t+1}) - \psi(\theta) \right)  \\
&= \exp(-\abs{\gamma}\psi(\theta)) \exp \left( \sum_{t = 1}^{\abs{\gamma}} \left[ K(x_t, x_{t+1}) + \sum_{i = 1}^{d}\theta^i g_i(x_t, x_{t+1}) \right] \right).   \\
\end{split}
\end{equation*}
}
Proceeding in a similar way with the right-hand side, we obtain
\GW{\begin{equation*}
\begin{split}
 &\sum_{t =1 }^{\abs{\gamma}} K(x_t, x_{t+1}) + \sum_{i = 1}^{d}\theta^i  \sum_{t =1 }^{\abs{\gamma}}  g_i(x_t, x_{t+1}) \\
 =  &\sum_{t = 1}^{\abs{\gamma}} K(x_{t+1}, x_{t}) + \sum_{i = 1}^{d}\theta^i  \sum_{t = 1}^{\abs{\gamma}}  g_i(x_{t+1}, x_{t}). \\
\end{split}
\end{equation*}}
When $K$ and the $g_i$ are log-reversible, this equality is verified for any closed path, and every member of the family is therefore reversible.
Taking $\theta = 0$ yields the reversibility requirement for $K$.
Further taking $\theta^i = \delta_{i}(j)$ for $j \in [d]$ similarly yields the requirement
for $g_j$. 
\end{proof}

This path checking approach, although mathematically convenient, 
is not algorithmically efficient.
In order to determine whether a full-support kernel --or function--
is reversible, the number of distinct Kolmogorov equations that must be checked is
\begin{equation*}
        \sum_{k = 3}^{\abs{\mathcal{X}}} {\binom{\abs{\mathcal{X}}} {k} } \frac{(k - 1)!}{2}, \qquad \text{\citep[Proposition~2.1]{jiang2018reversibility}}
\end{equation*}
which corresponds to the maximal number of cycles \GW{(i.e. closed paths such that the only repeated vertices are the first and last one)} in a complete graph over $\abs{\mathcal{X}}$ nodes. %
Such testing algorithm becomes rapidly 
intractable as $\abs{\mathcal{X}}$ increases.
However for Markov kernels, we know that this 
is equivalent 
to verifying the detailed balance equation, 
which can be achieved in (at most) polynomial time  $\bigO(\abs{\mathcal{X}}^3)$, by solving a linear system in order to find $\pi$.
We show that this idea naturally extends to verifying 
reversibility of functions, enabling us to design an algorithm of 
time complexity $\bigO(\abs{\mathcal{X}}^3)$.

\begin{lemma}
\label{lemma:equivalence-reversibility}
Let $h \in \mathcal{F}_+(\mathcal{X}, \mathcal{E})$ irreducible. $h$ is reversible if and only if $\Pi_h \circ h \trn$ is a symmetric matrix, with
$\Pi_h = v_h \trn u_h$ the PF projection of $h$,
where $u_h $ and $v_h$ are respectively the left and right PF eigenvectors of $h$, normalized such that $u_h v_h \trn = 1$.
\end{lemma}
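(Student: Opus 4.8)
The plan is to reduce reversibility of the function $h$ to a detailed-balance-type identity for its \emph{stochasticization} $\stoch(h)$, and then recognize that identity as the symmetry of $\Pi_h \circ h\trn$. Concretely, first I would recall from \eqref{equation:stochastic-version} that $P \eqdef \stoch(h)$ has entries $P(x,x') = h(x,x') v_h(x') / (\rho(h) v_h(x))$, where $\rho(h)$ is the PF root and $v_h$ the right PF eigenvector. As observed just before the statement, since $P$ is an irreducible row-stochastic matrix, $h$ and $P$ satisfy Kolmogorov's criterion simultaneously: along any closed path $\gamma$ the factors $v_h(x')/(\rho(h) v_h(x))$ telescope and cancel against the same factors in $\gamma^\star$, so $\prod_t h(\gamma(t)) = \prod_t h(\gamma^\star(t))$ if and only if $\prod_t P(\gamma(t)) = \prod_t P(\gamma^\star(t))$. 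Hence, by Theorem~\ref{theorem:kolmgorov-criterion}, $h$ is reversible iff $P$ is a reversible Markov kernel.

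Next I would translate reversibility of $P$ into the detailed balance equation $\pi_P(x) P(x,x') = \pi_P(x') P(x',x)$ for all $(x,x')$, where $\pi_P$ is the stationary distribution of $P$. The key computation is to identify $\pi_P$ in terms of the PF data of $h$: since $u_h h = \rho(h) u_h$, one checks directly that the vector with entries proportional to $u_h(x) v_h(x)$ is left-invariant for $P$, so $\pi_P(x) = u_h(x) v_h(x)$ under the normalization $u_h v_h\trn = 1$ (this is exactly the normalization imposed in the statement, and it makes $\pi_P$ a probability vector). Substituting $P(x,x') = h(x,x') v_h(x')/(\rho(h) v_h(x))$ and $\pi_P(x) = u_h(x)v_h(x)$, the detailed balance equation becomes
\begin{equation*}
u_h(x) v_h(x') h(x,x') = u_h(x') v_h(x) h(x',x),
\end{equation*}
after multiplying through by $\rho(h)$ and cancelling the common factors $v_h(x) v_h(x')$ (all entries are positive on $\mathcal{E}$, and both sides vanish off $\mathcal{E}$ since $\mathcal{E}=\mathcal{E}^\star$). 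The left-hand side is $(\Pi_h)(x,x')\, h\trn(x',x)$... more precisely, recalling $\Pi_h = v_h\trn u_h$, i.e. $\Pi_h(x,x') = v_h(x) u_h(x')$, the identity above reads $\Pi_h(x',x)\, h(x,x') = \Pi_h(x,x')\, h(x',x)$, which is precisely the statement that the matrix $\Pi_h \circ h\trn$ — whose $(x,x')$ entry is $v_h(x) u_h(x') h(x',x)$ — equals its own transpose. So $h$ reversible $\iff$ $P$ reversible $\iff$ detailed balance $\iff$ $\Pi_h \circ h\trn$ symmetric.

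The only mildly delicate point, and the place I would be most careful, is bookkeeping with transposition and the row/column conventions: the paper writes vectors as rows, $u_h$ is a left eigenvector and $v_h$ a right eigenvector, and $\Pi_h = v_h\trn u_h$ is the rank-one PF spectral projector, so one must make sure that the Hadamard product $\Pi_h \circ h\trn$ is formed with the correct orientation of each factor and that "symmetric" is being asserted for the right matrix. Everything else is a short positive-scalar cancellation. I would also note for completeness that the forward direction (if $\Pi_h\circ h\trn$ is symmetric then $h$ is reversible) follows by reversing the same chain of equivalences, and that the normalization $u_h v_h\trn = 1$ is used only to make $\pi_P$ literally a probability distribution — the symmetry conclusion is insensitive to it since rescaling $u_h, v_h$ rescales $\Pi_h\circ h\trn$ by a positive constant.
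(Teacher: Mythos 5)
Your argument is correct, but it takes a genuinely different route from the paper. You reduce the statement about the function $h$ to the Markov-kernel case: passing to $P = \stoch(h)$ as in \eqref{equation:stochastic-version}, noting that the factors $v_h(x')/(\rho(h)v_h(x))$ telescope around any closed path so that $h$ and $P$ satisfy the path-product condition simultaneously, invoking Kolmogorov's criterion (Theorem~\ref{theorem:kolmgorov-criterion}) for $P$, identifying $\pi_P = u_h \circ v_h$ as the stationary distribution, and then checking that detailed balance for $P$ is, after cancelling positive factors, exactly the symmetry of $\Pi_h \circ h\trn$; your bookkeeping of $\Pi_h(x,x') = v_h(x)u_h(x')$ and of the Hadamard product's orientation is right, and the remark that the normalization only rescales $\Pi_h\circ h\trn$ by a positive constant is also correct. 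The paper instead works directly with $h$: for the forward direction it sums the Kolmogorov equalities over all closed paths of the form $((x,x_1),\dots,(x_k,x'),(x',x))$ to get $h^k(x,x')h(x',x) = h(x,x')h^k(x',x)$ and then takes Ces\`aro limits \eqref{equation:cesaro-averages} to produce $\Pi_h$, while the converse follows by factoring out the ratio of $\Pi_h$-terms along a path and using that rank-one functions are reversible (Lemma~\ref{lemma:rank-1-reversible}); no stationary distribution or stochastic rescaling is needed. Your reduction is shorter and makes the lemma a corollary of the already-stated kernel criterion, and it effectively rederives the paper's follow-up remark that the lemma is a detailed-balance equation for $\pi_h = u_h/v_h$ (equivalently $\pi_P = u_h\circ v_h$ for $\stoch(h)$); the paper's direct argument stays entirely at the level of functions and, through the Ces\`aro computation, exhibits $\Pi_h$ as a limit of normalized powers, which connects to the subsequent algorithmic discussion about computing the PF projector. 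One small point to keep explicit in your write-up: the symmetry of $\mathcal{E}$ (needed so that both sides vanish off $\mathcal{E}$) is part of the standing assumption of Definition~\ref{definition:revesible-function} in one direction, and in the other direction it follows from positivity of $\Pi_h$ once $\Pi_h\circ h\trn$ is symmetric.
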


\begin{proof}
Treat $h$ as the linear operator $h \colon \R^{\mathcal{X}} \to \R^{\mathcal{X}}$.
Suppose first that $h$ is reversible.
We apply PF theory, which guarantees 
that the following Ces\`{a}ro averages converge \citep[Example~8.3.2]{meyer2000matrix}
to some positive projection,
\begin{equation}
\label{equation:cesaro-averages}
    \lim_{n \to \infty} \frac{1}{n} \sum_{k = 0}^{n} \left(\frac{h}{\rho(h)} \right)^k = \Pi_h = v_h \trn u_h,
\end{equation}
Fix $(x, x') \in \mathcal{X}^2$ such that $h(x, x') \neq 0$. 
For $k \in \N$, we write $\Gamma_k(x,x') \text{ \GW{$\subset$} } \Gamma(\mathcal{X}, \mathcal{E})$ the set of all directed closed paths $\gamma$ with $(x_1, x_2, \dots, x_k) \in \mathcal{X}^k$ such that
$$\gamma = ((x, x_1), (x_1, x_2), \dots, (x_{k-1}, x_k), (x_k, x'), (x', x)).$$
For any such cycle, it holds (perhaps vacuously if $\Gamma_k(x,x') = \emptyset$) that
\begin{equation*}
\begin{split}
h(x, x_1) \cdots h(x_{k}, x') h(x', x) &= h(x, x')h(x', x_k)  \cdots  h(x_1, x).
\end{split}
\end{equation*}
Summing this equality over all possible paths in $\Gamma_k(x, x')$ \GW{(i.e. summing over all $(x_1, \dots, x_k) \in \mathcal{X}^k$, with the assumption that $h(x,x') = 0$ whenever $(x,x') \not \in \mathcal{E}$)}, we obtain
\begin{equation*}
    h^k(x, x') h(x', x) = h(x, x')h^k(x', x).
\end{equation*}
In the case where $h(x, x') = 0$, the above equation holds by symmetry of $\mathcal{E}$.
For $n \in \N$, appropriately rescaling on both sides with the PF root, summing over all $k \in \set{0, \dots ,n}$
and taking the limit at $n \to \infty$, \eqref{equation:cesaro-averages} yields 
detailed balance equations with respect to the projection $\Pi_h$,
\begin{equation*}
\begin{split}
    \Pi_h(x, x') h(x', x) &= h(x, x') \Pi_h(x', x), \\
\end{split}
\end{equation*}
or in other words, reversibility of $h$ implies symmetry of $\Pi_h \circ h \trn$.

To prove necessity, we suppose now that this symmetry holds, with $\Pi_h$
the PF projection of $h$.
We know that $\rank(\Pi_h) = \rank(v_h \trn u_h) = 1$, and that $\Pi_h$ is positive.
Consider some finite directed closed path $\gamma$. Rearranging products yields
\GW{
\begin{equation*}
\begin{split}
    \prod_{t = 1 }^{\abs{\gamma}} h(\gamma(t)) &= \left( \prod_{t = 1 }^{\abs{\gamma}} \frac{\Pi_h(\gamma(t))}{\Pi_h(\gamma^\star(t))} \right) \prod_{t = 1 }^{\abs{\gamma}} h(\gamma^\star(t)),  \\
\end{split}
\end{equation*}
}
but the first factor on the right-hand side vanishes,
from the fact that rank one functions are always reversible (Lemma~\ref{lemma:rank-1-reversible}). This concludes the proof of the lemma.
\end{proof}

Notice that we can define $\pi_h(x) \triangleq u_h(x)/v_h(x)$ the positive entry-wise ratio of the PF eigenvectors. 
We can then restate
Lemma~\ref{lemma:equivalence-reversibility}
in terms of the familiar detailed balance 
equation $\pi_h(x) h(x, x') = \pi_h(x') h(x', x)$.
\begin{corollary}
\label{corollary:easy-log-reversibility}
Let $h \in \mathcal{F}(\mathcal{X}, \mathcal{E})$ some irreducible function. 
$h$ is log-reversible if and only if there exists $f \in \R^\mathcal{X}$ 
such that $\forall x, x' \in \mathcal{X}$, $h(x, x') = h(x', x) + f(x') - f(x)$.
\end{corollary}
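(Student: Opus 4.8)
The plan is to derive Corollary~\ref{corollary:easy-log-reversibility} from Lemma~\ref{lemma:equivalence-reversibility} by the exponential change of variable $h \mapsto \exp[h]$, which turns log-reversibility of a function in $\mathcal{F}(\mathcal{X},\mathcal{E})$ into reversibility of a positive function in $\mathcal{F}_+(\mathcal{X},\mathcal{E})$. First I would set $\widetilde{h} = \exp[h] \in \mathcal{F}_+(\mathcal{X},\mathcal{E})$, which is irreducible exactly when $h$ is, and recall the observation made just before Lemma~\ref{lemma:rank-1-reversible} that $h$ is log-reversible if and only if $\widetilde{h}$ is reversible (sums of the additive statement exponentiate to products of the multiplicative one along any closed path $\gamma$).

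Next I would invoke Lemma~\ref{lemma:equivalence-reversibility} applied to $\widetilde{h}$: it is reversible if and only if $\Pi_{\widetilde h} \circ \widetilde{h}\trn$ is symmetric, equivalently (as noted in the remark following the lemma) the detailed balance equation $\pi_{\widetilde h}(x)\widetilde{h}(x,x') = \pi_{\widetilde h}(x')\widetilde{h}(x',x)$ holds for all $x,x'$, where $\pi_{\widetilde h}(x) = u_{\widetilde h}(x)/v_{\widetilde h}(x)$ is the entry-wise ratio of PF eigenvectors of $\widetilde h$, a strictly positive vector. Taking logarithms of this balance equation, and using $\log\widetilde{h}(x,x') = h(x,x')$, gives
\begin{equation*}
h(x,x') + \log\pi_{\widetilde h}(x) = h(x',x) + \log\pi_{\widetilde h}(x')
\end{equation*}
for every edge, hence for every $(x,x')\in\mathcal{X}^2$ after extending by the symmetry of $\mathcal{E}$ when $h(x,x')=0$. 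Setting $f(x) \eqdef -\log\pi_{\widetilde h}(x) = \log v_{\widetilde h}(x) - \log u_{\widetilde h}(x) \in \R^\mathcal{X}$ yields exactly $h(x,x') = h(x',x) + f(x') - f(x)$, proving the forward implication.

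For the converse, if such an $f$ exists then along any closed path $\gamma = ((x_1,x_2),\dots,(x_n,x_1))$ the telescoping sum $\sum_{t=1}^{|\gamma|}\bigl(f(x_{t+1}) - f(x_t)\bigr) = 0$ gives $\sum_{t=1}^{|\gamma|} h(\gamma(t)) = \sum_{t=1}^{|\gamma|} h(\gamma^\star(t))$, which is precisely log-reversibility; alternatively one can just note that $\widetilde h = \exp[h]$ then satisfies $\widetilde h(x,x') = \widetilde h(x',x)\,e^{f(x')}/e^{f(x)}$, so $\widetilde h$ differs from the symmetric function $\exp[h]\circ$ (its symmetrization) by a rank-one diagonal rescaling and is reversible by the rank-one argument already used in the proof of Lemma~\ref{lemma:equivalence-reversibility}. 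I expect no serious obstacle here: the only point requiring a line of care is the passage from edges to all of $\mathcal{X}^2$, which is handled by the standing assumption $\mathcal{E} = \mathcal{E}^\star$ so that $h(x,x') = 0 \iff h(x',x) = 0$, making the claimed identity hold trivially (both sides cancel) off the edge set.
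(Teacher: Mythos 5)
Your route is the paper's own: the corollary is obtained exactly as you propose, by applying Lemma~\ref{lemma:equivalence-reversibility} to $\exp[h]$, restating the symmetry of $\Pi_{\widetilde h}\circ \widetilde h\trn$ as the detailed balance equation $\pi_{\widetilde h}(x)\widetilde h(x,x')=\pi_{\widetilde h}(x')\widetilde h(x',x)$ with $\pi_{\widetilde h}=u_{\widetilde h}/v_{\widetilde h}$ (this is precisely the remark following the lemma), and taking logarithms; the converse by telescoping $f$ along closed paths is the intended elementary argument and is correct. Two small slips are worth fixing. First, the sign of $f$: your own display $h(x,x')+\log\pi_{\widetilde h}(x)=h(x',x)+\log\pi_{\widetilde h}(x')$ rearranges to $h(x,x')=h(x',x)+\log\pi_{\widetilde h}(x')-\log\pi_{\widetilde h}(x)$, so the correct choice is $f=\log\pi_{\widetilde h}=\log u_{\widetilde h}-\log v_{\widetilde h}$, not its negative (and since the statement is existential this is a one-character repair). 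Second, the off-edge extension is not justified by ``both sides cancel'': for $(x,x')\notin\mathcal{E}$ the left-hand side is $0$ while the right-hand side is $f(x')-f(x)$, which need not vanish; indeed, for a sparse symmetric edge set (e.g.\ the birth-and-death graph) the identity quantified literally over all of $\mathcal{X}^2$ can fail, since the on-edge constraints already pin down $f$ up to an additive constant. The identity should be read on $\mathcal{E}$ (which is how the paper invokes the corollary, always against quantities supported on $\mathcal{E}$), and in the full-support case $\mathcal{E}=\mathcal{X}^2$ there is nothing to extend.
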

Remark: when $h$ is known to be reversible, one can compute $\pi_h$ in $\bigO(\abs{\mathcal{X}})$, by adapting the technique of \citep{suomela1979invariant}; 
unfortunately, it is not possible to check for 
reversibility using this method.
If the space becomes large, the reader can 
consider iterative (power) methods to compute 
the PF projector, potentially further reducing the 
verification time cost.
We end this section with a technical lemma that will allow us
in later sections to swiftly compute expectations of functions 
under certain reversibility or skew-symmetricity properties. 

\begin{lemma}
\label{lemma:q-expectation}
Let $P$ irreducible with associated edge measure matrix $Q$.
For a function $g \colon \mathcal{X}^2 \to \R$, we write
$Q[g] = \sum_{x, x' \in \mathcal{X}} Q(x, x') g(x, x')$.
\begin{enumerate}
    \item[$(i)$] If $g$ is log-reversible, $Q[g] = Q^\star[g]$. 
    \item[$(ii)$] If $g$ is skew-symmetric and $P$ is reversible, $Q[g] = 0$.
    \item[$(iii)$] If there exists $f \in \R^\mathcal{X}$ such that for all $x, x' \in \mathcal{X}$, $g(x,x') = f(x') - f(x)$, $Q[g] = 0$ (regardless of $P$ being reversible).
\end{enumerate}
\end{lemma}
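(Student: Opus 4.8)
The plan is to prove the three parts in increasing order of sophistication, reusing the structural facts established earlier in the section. For part $(iii)$, the key observation is that $Q \in \mathcal{Q}(\mathcal{X},\mathcal{E})$ has matching row and column sums (the balance condition at \eqref{eq:equivalent-edge-measure-set}, equivalently $\sum_{x'} Q(x,x') = \sum_{x'} Q(x',x) = \pi(x)$). Writing $g(x,x') = f(x') - f(x)$ and expanding, $Q[g] = \sum_{x,x'} Q(x,x') f(x') - \sum_{x,x'} Q(x,x') f(x) = \sum_{x'} f(x') \pi(x') - \sum_x f(x)\pi(x) = 0$. This requires nothing about reversibility, only the stationarity identity, so it is essentially a one-line calculation.

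For part $(i)$, I would invoke Corollary~\ref{corollary:easy-log-reversibility}: since $g$ is log-reversible, there exists $f \in \R^\mathcal{X}$ with $g(x,x') = g(x',x) + f(x') - f(x)$ for all $x,x'$. Then $Q[g] = \sum_{x,x'} Q(x,x') g(x',x) + \sum_{x,x'} Q(x,x')(f(x') - f(x))$. The first sum is exactly $\sum_{x,x'} Q^\star(x,x') g(x,x') = Q^\star[g]$ after relabeling indices and using $Q^\star = Q\trn$; the second sum vanishes by part $(iii)$. Hence $Q[g] = Q^\star[g]$.

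For part $(ii)$, suppose $g$ is skew-symmetric ($g(x',x) = -g(x,x')$) and $P$ is reversible, so $Q = Q^\star$. Since a skew-symmetric function is in particular log-reversible (it satisfies the condition of Corollary~\ref{corollary:easy-log-reversibility} with $f = 0$, because $-g(x,x') = g(x',x)$... wait, one must be careful here), I would instead argue directly: by part $(i)$ applied after noting skew-symmetry gives log-reversibility, or more simply, $Q[g] = \sum_{x,x'} Q(x,x') g(x,x')$ and substituting $Q(x,x') = Q^\star(x,x') = Q(x',x)$ yields $Q[g] = \sum_{x,x'} Q(x',x) g(x,x') = \sum_{x,x'} Q(x,x') g(x',x) = -\sum_{x,x'} Q(x,x') g(x,x') = -Q[g]$, forcing $Q[g] = 0$.

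The only subtle point — and the one I would be most careful about — is the claim that skew-symmetry implies log-reversibility, needed if one wants to route part $(ii)$ through part $(i)$. A skew-symmetric $g$ satisfies $g(x,x') - g(x',x) = 2g(x,x')$, which is not of the form $f(x') - f(x)$ in general, so skew-symmetric functions are \emph{not} automatically log-reversible. Therefore part $(ii)$ genuinely needs the direct symmetrization argument above using $Q = Q^\star$, rather than a reduction to part $(i)$; this is the one place where an incautious proof would go wrong, and I would present the standalone computation for $(ii)$ to avoid it.
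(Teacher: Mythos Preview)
Your proposal is correct and follows essentially the same approach as the paper: prove $(iii)$ first via the balance condition $\sum_{x'} Q(x,x') = \sum_{x'} Q(x',x) = \pi(x)$, derive $(i)$ from Corollary~\ref{corollary:easy-log-reversibility} combined with $(iii)$ and a relabeling, and handle $(ii)$ by the direct symmetrization $Q[g] = -Q[g]$ using $Q = Q^\star$. Your caution that skew-symmetric functions are not generally log-reversible is well placed; the paper likewise treats $(ii)$ by the standalone argument rather than routing it through $(i)$.
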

\begin{proof}
\GW{Claim $(iii)$ follows by property of edge measure $Q$.}
\begin{equation*}
\begin{split}
    Q[g] &= \sum_{x, x' \in \mathcal{X}} Q(x, x')(f(x') - f(x))  \\
    &= \sum_{x' \in \mathcal{X}} f(x') \sum_{x \in \mathcal{X}} Q(x, x')  - \sum_{x \in \mathcal{X}} f(x) \sum_{x' \in \mathcal{X}} Q(x, x')  \\
    &= \sum_{x' \in \mathcal{X}} f(x') \pi(x')  - \sum_{x \in \mathcal{X}} f(x) \pi( x)  = 0.\\
\end{split}
\end{equation*}
\GW{From Corollary~\ref{corollary:easy-log-reversibility}, claim $(iii)$, and re-indexing,
\begin{equation*}
    \begin{split}
        Q[g] = Q[g \trn] + \sum_{x,x' \in \mathcal{X}} Q(x,x')(f(x') - f(x)) = Q^\star[g],
    \end{split}
\end{equation*}
which yields $(i)$}.
To prove $(ii)$, consider $g$ such that $g(x', x) = - g(x, x')$. Then
by re-indexing and symmetry of $Q$,
\begin{equation*}
\begin{split}
    Q[g] &= \sum_{x', x \in \mathcal{X}} Q(x', x) g(x', x)  = - \sum_{x', x \in \mathcal{X}} Q(x, x') g(x, x') = - Q[g].\\
\end{split}
\end{equation*}

\end{proof}

\section{The e-family of reversible Markov kernels}
\label{section:submanifold-reversible-kernels}

In Section~\ref{section:affine-structures}, we begin by analyzing the affine structure of the space of
log-reversible functions, derive its dimension, construct a basis, and deduce that the manifold of all irreducible reversible Markov kernels forms an exponential family.
The dimension of this family confirms the well-known fact 
that the number of free parameters for a 
reversible kernel is only about half 
of what is required for the general case, 
hence that reversible chains serve in a sense as a ``natural intermediate'' \citep[Section~5]{diaconis2006bayesian} in terms of model complexity. In Section~\ref{section:parametrization-reversible-kernels}, 
we proceed to derive a systematic parametrization of the manifold $\mathcal{W}(\mathcal{X}, \mathcal{E})$, similar in spirit to the one given in \citet{ito1988}, and in \citet[Example~1]{nagaoka2017exponential}.
In Section~\ref{section:doubly-autoparallel},
we connect our results to general differential geometry, and point out that reversible kernels $\mathcal{W} \rev(\mathcal{X}, \mathcal{E})$ form a doubly autoparallel submanifold in $\mathcal{W} (\mathcal{X}, \mathcal{E})$. Finally, we conclude with a brief discussion on reversible geodesics (Section~\ref{section:reversible-geodesics}).

\subsection{Affine structures}
\label{section:affine-structures}

Identifying $\mathcal{X}$ with $[m]$, we can endow the set with the natural order induced from $\N$. 
In this section, we will henceforth assume that $\mathcal{E}$ is symmetric, and consider the following subsets of $\mathcal{E}$,
\begin{equation*}
\begin{split}
T_-(\mathcal{E}) &\eqdef \set{(x, x') \in \mathcal{E} \colon x' > x }, \\
T_+(\mathcal{E}) &\eqdef \set{(x, x') \in \mathcal{E} \colon x' < x }, \\
T_0(\mathcal{E}) &\eqdef \set{(x, x') \in \mathcal{E} \colon x' =  x }, \\
\end{split}
\end{equation*}
and
\begin{equation*}
\begin{split}
T(\mathcal{E}) &\eqdef \set{(x, x') \in \mathcal{E} \colon x' \leq x, (x, x') \neq (m, x_\star), x_\star = \argmin_{x \in \mathcal{X}} \set{ (m, x) \in \mathcal{E} }}.
\end{split}
\end{equation*}
We immediately observe that the following cardinality relations hold
\begin{equation}
\label{equation:T-cardinalities}
\begin{split}
\abs{T_+(\mathcal{E})} &= \abs{T_-(\mathcal{E})}, \\
\abs{T(\mathcal{E})} &= \abs{T_+(\mathcal{E})} + \abs{T_0(\mathcal{E})} - 1 \text{ \GW{$ = \frac{\abs{\mathcal{E}} + \abs{T_0(\mathcal{E})}}{2} - 1$}},\\
\end{split}
\end{equation}
and that from irreducibility, $x^\star \neq m$. \GW{The last expression in \eqref{equation:T-cardinalities} highlights the fact that $\abs{T(\mathcal{E})}$ is independent of any ordering of elements of $\mathcal{X}$. Note also that the element $(m, x_\star)$ in the definition of $T(\mathcal{E})$ plays no special role, and could be replaced with any other element of $T_0(\mathcal{E}) \cup T_+(\mathcal{E})$.}
We define the sets of symmetric and log-reversible functions (Definition~\ref{definition:revesible-function}) over the graph
$(\mathcal{X}, \mathcal{E})$, respectively by
\begin{equation*}
    \begin{split}
    \mathcal{F} \sym (\mathcal{X}, \mathcal{E}) &\eqdef \set{ h \in  \mathcal{F} (\mathcal{X}, \mathcal{E}) \colon \forall x, x' \in \mathcal{X}, h(x, x') = h(x', x)}, \\
    \mathcal{F} \rev (\mathcal{X}, \mathcal{E}) &\eqdef \set{ h \in  \mathcal{F} (\mathcal{X}, \mathcal{E}) \colon h \text{ is log-reversible }}.
    \end{split}
\end{equation*}
We note that $\mathcal{F} \sym (\mathcal{X}, \mathcal{E})$ is isomorphic to the vector space of symmetric matrices whose entries are null outside of $\mathcal{E}$,
thus $\dim \mathcal{F} \sym (\mathcal{X}, \mathcal{E}) = \abs{T_+(\mathcal{E})} + \abs{T_0(\mathcal{E})}$.
We now show that $\mathcal{F} \rev (\mathcal{X}, \mathcal{E})$ is also a vector space, and that it contains $\mathcal{N}  (\mathcal{X}, \mathcal{E})$ defined at \eqref{definition:anti-shift-functions}. 

\begin{lemma}
\label{lemma:subspace-log-reversible}
The following vector subspace inclusions hold:
\begin{equation*}
    \mathcal{N}  (\mathcal{X}, \mathcal{E}) \stackrel{(i)}{\text{\GW{$\subset$}}} \mathcal{F} \rev (\mathcal{X}, \mathcal{E}) \stackrel{(ii)}{\text{\GW{$\subset$}}} \mathcal{F} (\mathcal{X}, \mathcal{E}).
\end{equation*}
\end{lemma}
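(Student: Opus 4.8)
The plan is to prove the two inclusions separately, noting that inclusion $(ii)$ holds trivially at the level of sets (by definition $\mathcal{F}\rev(\mathcal{X},\mathcal{E})$ is a subset of $\mathcal{F}(\mathcal{X},\mathcal{E})$), so the real content is that $\mathcal{F}\rev(\mathcal{X},\mathcal{E})$ is closed under the vector-space operations, and that inclusion $(i)$ holds.

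For the subspace property of $\mathcal{F}\rev(\mathcal{X},\mathcal{E})$, I would observe that log-reversibility is exactly the requirement that the quantity $\sum_{t=1}^{\abs{\gamma}} h(\gamma(t)) - \sum_{t=1}^{\abs{\gamma}} h(\gamma^\star(t))$ vanishes for every $\gamma \in \Gamma(\mathcal{X},\mathcal{E})$. For each fixed $\gamma$, this expression is a linear functional of $h \in \mathcal{F}(\mathcal{X},\mathcal{E})$, so $\mathcal{F}\rev(\mathcal{X},\mathcal{E})$ is the intersection of the kernels of a family of linear functionals on $\mathcal{F}(\mathcal{X},\mathcal{E})$; in particular it contains the zero function and is stable under addition and scalar multiplication, hence is a linear subspace.

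For inclusion $(i)$, I would take $h \in \mathcal{N}(\mathcal{X},\mathcal{E})$, so $h(x,x') = f(x') - f(x) + c$ on $\mathcal{E}$ for some $f \in \R^{\mathcal{X}}$ and $c \in \R$, and fix an arbitrary closed path $\gamma = ((x_1,x_2),(x_2,x_3),\dots,(x_n,x_1))$, with the convention $x_{n+1} = x_1$. Summing $h$ along $\gamma$, the $f$-contributions telescope and cancel precisely because the path returns to its starting vertex, leaving $\sum_{t=1}^{n} h(\gamma(t)) = nc$; summing $h$ along $\gamma^\star$, whose $t$-th edge is $(x_{t+1}, x_t)$, yields the same value $nc$ by the identical telescoping argument with the roles of $f(x_t)$ and $f(x_{t+1})$ exchanged. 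Since $\gamma$ was arbitrary, $h$ satisfies the log-reversibility condition, so $h \in \mathcal{F}\rev(\mathcal{X},\mathcal{E})$.

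I do not anticipate a genuine obstacle here; the only delicate point is the index bookkeeping in the telescoping sum and the explicit use of the closed-path condition $x_{n+1} = x_1$. One could alternatively derive $(i)$ from Corollary~\ref{corollary:easy-log-reversibility} by writing $h(x,x') - h(x',x) = (2f)(x') - (2f)(x)$, but that corollary is stated for irreducible functions, whereas a general element of $\mathcal{N}(\mathcal{X},\mathcal{E})$ need not be irreducible, so I would favor the direct path-based argument given above.
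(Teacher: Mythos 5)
Your proof is correct, and for inclusion $(i)$ it takes a different (more self-contained) route than the paper. For $(ii)$ you and the paper do essentially the same thing: the paper says $\mathcal{F}\rev(\mathcal{X},\mathcal{E})$ is closed under linear combinations and contains the zero function, which you phrase slightly more structurally as the observation that $\mathcal{F}\rev(\mathcal{X},\mathcal{E})$ is the intersection over closed paths $\gamma$ of the kernels of the linear functionals $h \mapsto \sum_t h(\gamma(t)) - \sum_t h(\gamma^\star(t))$. For $(i)$, however, the paper does not telescope along paths: it writes $h(x,x') = f(x') - f(x) + c$, observes $h(x,x') = h(x',x) + 2f(x') - 2f(x)$, and invokes the sufficient direction of Corollary~\ref{corollary:easy-log-reversibility}, whereas you verify the defining identity of Definition~\ref{definition:revesible-function} directly, computing that both $\sum_t h(\gamma(t))$ and $\sum_t h(\gamma^\star(t))$ equal $\abs{\gamma}\,c$ by telescoping (using $\mathcal{E} = \mathcal{E}^\star$ so that $\gamma^\star$ is a path in the same graph, which is the standing assumption of this section). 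Your argument is valid and arguably more elementary, since it does not pass through the corollary at all; note only that your stated reason for avoiding the corollary is not a genuine obstruction, because in the paper's conventions ``irreducible'' refers to the standing assumption that $(\mathcal{X},\mathcal{E})$ is strongly connected (so it applies to every $h \in \mathcal{F}(\mathcal{X},\mathcal{E})$, including elements of $\mathcal{N}(\mathcal{X},\mathcal{E})$), and the direction of the corollary used here is the easy one --- indeed your telescoping computation is exactly its proof in this special case. The only cosmetic omission is that you do not explicitly remark that $\mathcal{N}(\mathcal{X},\mathcal{E})$ is itself a subspace (the paper notes $f=0$, $c=0$ is allowed and closure under sums), but this is immediate.
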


\begin{proof}
To verify $(ii)$, we argue that
$\mathcal{F} \rev (\mathcal{X}, \mathcal{E})$ is closed by linear combinations from properties of the sum.
The fact that the null function is trivially reversible concludes this claim. 
\GW{For $(i)$, consider an element $h \in \mathcal{N}(\mathcal{X}, \mathcal{E})$, such that $h(x, x') = f(x') - f(x) + c$. Then $h(x, x') = h(x', x) + 2f(x') - 2f(x)$, and from Corollary~\ref{corollary:easy-log-reversibility}, $h \in \mathcal{F}\rev(\mathcal{X}, \mathcal{E})$, thus the inclusion holds.}
The set is closed by linear combinations by properties of sums again, 
and taking $f = 0, c =0$ is allowed, whence claim $(i)$. 
\end{proof}
\GW{
\begin{remark}
\label{remark:log-reversible-as-direct-sum}
In fact, defining
\begin{equation}
\label{definition:centered-anti-shift-functions}
\begin{split}
\mathcal{N}_0(\mathcal{X}, \mathcal{E}) \eqdef \bigg\{ &h \in \mathcal{F}(\mathcal{X}, \mathcal{E}) \colon \exists f \in \R^\mathcal{X}, \\ &\forall (x, x') \in \mathcal{E}, h(x, x') = f(x') - f(x) \bigg\},
\end{split}
\end{equation}
Corollary~\ref{corollary:easy-log-reversibility} implies that 
$\mathcal{F}\rev(\mathcal{X}, \mathcal{E}) = \mathcal{F} \sym(\mathcal{X}, \mathcal{E}) \oplus \mathcal{N}_0(\mathcal{X}, \mathcal{E})$.
\end{remark}}
It is then possible to further define the quotient space of reversible generator functions
$$\mathcal{G} \rev (\mathcal{X}, \mathcal{E}) \eqdef  \mathcal{F} \rev (\mathcal{X}, \mathcal{E})/\mathcal{N}(\mathcal{X}, \mathcal{E}).$$

\begin{theorem}
\label{theorem:quotient-space-dimension-e-family}
The following statements hold.
\begin{enumerate}
    \item[$(i)$] The set of reversible generators $\mathcal{G}\rev (\mathcal{X}, \mathcal{E})$ can be endowed with a $\abs{T(\mathcal{E})}$-dimensional vector space structure.
    \item[$(ii)$] 
The set $\mathcal{W}\rev(\mathcal{X}, \mathcal{E})$ of irreducible and reversible Markov kernels over $(\mathcal{X}, \mathcal{E})$ forms an e-family of dimension $\dim \mathcal{W}\rev(\mathcal{X}, \mathcal{E}) = \abs{T(\mathcal{E})}$.
\end{enumerate}
\end{theorem}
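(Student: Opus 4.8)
The plan is to prove $(i)$ first and then deduce $(ii)$ essentially for free from the correspondence between affine subspaces of $\mathcal{G}(\mathcal{X}, \mathcal{E})$ and e-families (Nagaoka's Theorem~2, quoted in the preliminaries). For $(i)$, I would exploit the direct-sum decomposition $\mathcal{F}\rev(\mathcal{X}, \mathcal{E}) = \mathcal{F}\sym(\mathcal{X}, \mathcal{E}) \oplus \mathcal{N}_0(\mathcal{X}, \mathcal{E})$ recorded in Remark~\ref{remark:log-reversible-as-direct-sum}, together with $\mathcal{N}(\mathcal{X}, \mathcal{E}) = \mathcal{N}_0(\mathcal{X}, \mathcal{E}) \oplus \mathbb{R}\cdot\unit$ (the constant functions), where $\unit$ denotes the all-ones function on $\mathcal{E}$. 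Since $\mathcal{N}(\mathcal{X}, \mathcal{E}) \subset \mathcal{F}\rev(\mathcal{X}, \mathcal{E})$ by Lemma~\ref{lemma:subspace-log-reversible}, the quotient $\mathcal{G}\rev(\mathcal{X}, \mathcal{E}) = \mathcal{F}\rev(\mathcal{X}, \mathcal{E})/\mathcal{N}(\mathcal{X}, \mathcal{E})$ is well-defined, and a dimension count gives
$$\dim \mathcal{G}\rev(\mathcal{X}, \mathcal{E}) = \dim \mathcal{F}\sym(\mathcal{X}, \mathcal{E}) + \dim \mathcal{N}_0(\mathcal{X}, \mathcal{E}) - \dim \mathcal{N}(\mathcal{X}, \mathcal{E}).$$
We already know $\dim \mathcal{F}\sym(\mathcal{X}, \mathcal{E}) = \abs{T_+(\mathcal{E})} + \abs{T_0(\mathcal{E})}$, and $\dim \mathcal{N}(\mathcal{X}, \mathcal{E}) = \abs{\mathcal{X}}$ from the preliminaries. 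So the one genuine computation left is $\dim \mathcal{N}_0(\mathcal{X}, \mathcal{E})$: the map $f \mapsto (x, x') \mapsto f(x') - f(x)$ from $\mathbb{R}^\mathcal{X}$ into $\mathcal{F}(\mathcal{X}, \mathcal{E})$ has kernel exactly the constants (using that $(\mathcal{X}, \mathcal{E})$ is connected), hence $\dim \mathcal{N}_0(\mathcal{X}, \mathcal{E}) = \abs{\mathcal{X}} - 1$. Plugging in and using the cardinality identity \eqref{equation:T-cardinalities}, namely $\abs{T(\mathcal{E})} = \abs{T_+(\mathcal{E})} + \abs{T_0(\mathcal{E})} - 1$, yields $\dim \mathcal{G}\rev(\mathcal{X}, \mathcal{E}) = \abs{T_+(\mathcal{E})} + \abs{T_0(\mathcal{E})} + (\abs{\mathcal{X}} - 1) - \abs{\mathcal{X}} = \abs{T_+(\mathcal{E})} + \abs{T_0(\mathcal{E})} - 1 = \abs{T(\mathcal{E})}$, which is $(i)$.

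For $(ii)$, I would first check that $\mathcal{W}\rev(\mathcal{X}, \mathcal{E})$ is exactly the image $\Delta(\mathcal{G}\rev(\mathcal{X}, \mathcal{E}))$. By Theorem~\ref{theorem:characterization-reversible-e-family} (or more directly Kolmogorov's criterion, Theorem~\ref{theorem:kolmgorov-criterion}, and the observation right after Definition~\ref{definition:revesible-function} that the two notions of reversibility coincide for stochastic matrices), an irreducible kernel $P = \mathfrak{s}(\exp[h])$ is reversible iff $h$ is log-reversible, i.e. iff $h \in \mathcal{F}\rev(\mathcal{X}, \mathcal{E})$; and since $\Delta$ factors through the quotient by $\mathcal{N}(\mathcal{X}, \mathcal{E})$, this says precisely that $P \in \Delta(\mathcal{G}\rev(\mathcal{X}, \mathcal{E}))$. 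Because $\mathcal{G}\rev(\mathcal{X}, \mathcal{E})$ is a linear — hence affine — subspace of $\mathcal{G}(\mathcal{X}, \mathcal{E})$, Nagaoka's Theorem~2 identifies $\Delta(\mathcal{G}\rev(\mathcal{X}, \mathcal{E}))$ as an e-family whose dimension equals $\dim \mathcal{G}\rev(\mathcal{X}, \mathcal{E}) = \abs{T(\mathcal{E})}$ by part $(i)$. (One should note in passing that $0 \in \mathcal{G}\rev(\mathcal{X}, \mathcal{E})$, so the parameter space contains a ball around the origin as Definition~\ref{definition:e-family-parametric} requires; concretely this corresponds to the carrier being any fixed reversible kernel, e.g. a reversibilization of the uniform kernel on $\mathcal{E}$.)

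The only step with any friction is confirming $\Delta(\mathcal{G}\rev) = \mathcal{W}\rev$: one direction (log-reversible $h$ gives reversible $P$) is immediate from the characterization, while the converse requires knowing that the representative $h$ of a reversible kernel can be taken log-reversible — which again follows because $P$ reversible forces $\log P$ itself (as a function on $\mathcal{E}$) to satisfy the additive Kolmogorov identities, so $\log P \in \mathcal{F}\rev(\mathcal{X}, \mathcal{E})$ and its coset lies in $\mathcal{G}\rev(\mathcal{X}, \mathcal{E})$. I do not expect a real obstacle here; the main care is bookkeeping the three nested subspaces $\mathcal{N}_0 \subset \mathcal{N} \subset \mathcal{F}\rev$ and making sure the dimension arithmetic is consistent with \eqref{equation:T-cardinalities}. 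Alternatively, one can prove $(i)$ constructively by exhibiting an explicit basis of $\mathcal{G}\rev(\mathcal{X}, \mathcal{E})$ indexed by $T(\mathcal{E})$ (the symmetric elementary functions $\unit_{(x,x')} + \unit_{(x',x)}$ for $(x,x') \in T_+(\mathcal{E})$ together with the diagonal indicators for $T_0(\mathcal{E})$, modulo one relation coming from $\mathcal{N}$), which is in fact the content of the forthcoming Theorem~\ref{theorem:reversible-basis}; but the quotient-dimension argument above is the cleanest route for the present statement.
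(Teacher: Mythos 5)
Your proposal is correct and takes essentially the same route as the paper: part $(ii)$ is the paper's argument verbatim (establish $\Delta(\mathcal{G}\rev(\mathcal{X},\mathcal{E})) = \mathcal{W}\rev(\mathcal{X},\mathcal{E})$ via Kolmogorov's criterion / Theorem~\ref{theorem:characterization-reversible-e-family} and invoke Nagaoka's affine-subspace correspondence), and part $(i)$ rests on the same symmetrization fact (Corollary~\ref{corollary:easy-log-reversibility}, Remark~\ref{remark:log-reversible-as-direct-sum}). The only cosmetic difference is bookkeeping: you count dimensions through $\mathcal{F}\rev = \mathcal{F}\sym \oplus \mathcal{N}_0$ with $\dim \mathcal{N}_0(\mathcal{X},\mathcal{E}) = \abs{\mathcal{X}} - 1$, while the paper passes to the isomorphic quotient $\mathcal{F}\sym(\mathcal{X},\mathcal{E})/\R$; both yield $\abs{T(\mathcal{E})}$, so there is no gap.
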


\begin{proof}
Let $g$ be a log-reversible function over $(\mathcal{X},\mathcal{E})$. From Corollary~\ref{corollary:easy-log-reversibility}, there exists $f \in \R^\mathcal{X}$ such that $g(x, x') = g(x', x) + f(x') - f(x)$, 
or writing $h(x, x') = g(x, x') + \tilde{f}(x) - \tilde{f}(x')$ with $\tilde{f} = f/2$ \GW{(i.e. $h = (g + g \trn)/2$)}, it holds that
$h(x, x') = h(x', x)$, i.e. $h$ is symmetric.
$\mathcal{G} \rev (\mathcal{X},\mathcal{E})$ thus also corresponds to the alternative quotient space
\GW{
\begin{equation}
    \label{eq:aternative-quotient-space}
    \mathcal{G} \rev (\mathcal{X},\mathcal{E}) \cong \mathcal{F} \sym (\mathcal{X},\mathcal{E}) / \R,
\end{equation}}
and as a consequence $\dim \mathcal{G} \rev (\mathcal{X},\mathcal{E}) =  \abs{T_+(\mathcal{E})} + \abs{T_0(\mathcal{E})} - 1 = \abs{T(\mathcal{E})}$.
This concludes the proof of $(i)$.
Let $g \in \mathcal{G} \rev (\mathcal{X}, \mathcal{E})$, and recall the definition \eqref{equation:delta-diffeo} of the diffeomorphism $\Delta$. 
By Theorem~\ref{theorem:characterization-reversible-e-family}, 
$\Delta(\mathcal{G} \rev (\mathcal{X}, \mathcal{E})) \text{ \GW{$\subset$} } \mathcal{W} \rev (\mathcal{X}, \mathcal{E})$.
Conversely, let $P \in \mathcal{W} \rev (\mathcal{X}, \mathcal{E})$. Then by the Kolmogorov criterion (Theorem~\ref{theorem:kolmgorov-criterion}), $\log [P] \in \mathcal{F} \rev (\mathcal{X}, \mathcal{E})$,
and there exist $(g, f, c) \in \mathcal{G} \rev (\mathcal{X}, \mathcal{E}) \times \R^\mathcal{X} \times \R$ such that for any $x, x' \in \mathcal{X}, \log P(x,x') = g(x,x') + f(x') - f(x) + c$ (where $c$ is unique, $f$ is unique up to an additive constant, and both can be recovered from PF theory).
In other words, there exists $g \in \mathcal{G} \rev (\mathcal{X}, \mathcal{E})$, with $P = \stoch \circ \exp [g]$, hence $P \in \Delta(\mathcal{G} \rev (\mathcal{X}, \mathcal{E}))$, proving that
$$\Delta(\mathcal{G} \rev (\mathcal{X}, \mathcal{E})) = \mathcal{W} \rev (\mathcal{X}, \mathcal{E}).$$
Claim $(ii)$ then follows from
\citet[Theorem~2]{nagaoka2017exponential}, 
as discussed at the end of Section~\ref{section:preliminaries}.
\end{proof}

\begin{corollary}
\GW{For the set of positive Markov kernel, $\abs{T_0(\mathcal{E})} = \abs{\mathcal{X}}$ and $\abs{\mathcal{E}} = \abs{\mathcal{X}}^2$, thus $\dim \mathcal{W}\rev(\mathcal{X}, \mathcal{X}^2) = \abs{\mathcal{X}}(\abs{\mathcal{X}}+1)/2 - 1$.}
This is in line with the known number of degrees of freedom of reversible Markov chains \citep{diaconis2006bayesian, pistone2013algebra}.
\end{corollary}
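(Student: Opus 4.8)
The plan is to obtain the claim as a direct specialization of Theorem~\ref{theorem:quotient-space-dimension-e-family} to the full-support case $\mathcal{E} = \mathcal{X}^2$. First I would note that when every pair is an edge, the diagonal $\set{(x,x) \colon x \in \mathcal{X}}$ is entirely contained in $\mathcal{E}$, so $\abs{T_0(\mathcal{X}^2)} = \abs{\mathcal{X}}$, and trivially $\abs{\mathcal{X}^2} = \abs{\mathcal{X}}^2$. Substituting these two quantities into the (ordering-independent) identity $\abs{T(\mathcal{E})} = \tfrac{\abs{\mathcal{E}} + \abs{T_0(\mathcal{E})}}{2} - 1$ recorded in \eqref{equation:T-cardinalities} yields $\abs{T(\mathcal{X}^2)} = \tfrac{\abs{\mathcal{X}}^2 + \abs{\mathcal{X}}}{2} - 1 = \tfrac{\abs{\mathcal{X}}(\abs{\mathcal{X}}+1)}{2} - 1$. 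By Theorem~\ref{theorem:quotient-space-dimension-e-family}~$(ii)$ one has $\dim \mathcal{W}\rev(\mathcal{X}, \mathcal{X}^2) = \abs{T(\mathcal{X}^2)}$, which is precisely the asserted value.

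For the concluding remark that this agrees with the classically known degree-of-freedom count, I would give an independent sanity check through the edge-measure picture. By \eqref{eq:equivalent-edge-measure-set} together with the symmetry constraint $Q = Q^\star$, an irreducible reversible full-support kernel is in bijection with its edge measure $Q$, which ranges exactly over the entry-wise positive symmetric matrices on $\mathcal{X}^2$ satisfying $\sum_{x,x'} Q(x,x') = 1$; such a symmetric matrix has $\binom{\abs{\mathcal{X}}}{2} + \abs{\mathcal{X}} = \tfrac{\abs{\mathcal{X}}(\abs{\mathcal{X}}+1)}{2}$ independent entries, of which the single normalization constraint removes one. This matches the counts in \citet{diaconis2006bayesian} and \citet{pistone2013algebra}. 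Equivalently, one may read off $\abs{\mathcal{X}} - 1$ parameters for the stationary distribution $\pi$ plus $\tfrac{\abs{\mathcal{X}}(\abs{\mathcal{X}}-1)}{2}$ parameters for the off-diagonal conductances, the diagonal then being forced by row-stochasticity, for the same total.

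I do not anticipate a genuine obstacle here: the corollary is pure bookkeeping on top of the dimension formula already proved. The only point deserving a moment's care is ensuring that the informal ``free-entry minus one constraint'' count of symmetric stochastic $Q$ really equals the manifold dimension, i.e. that the positivity/open-set requirements of Definition~\ref{definition:m-family-parametric} do not alter the dimension; but this is exactly what the diffeomorphism $\Delta$ and the affine-subspace correspondence invoked in the proof of Theorem~\ref{theorem:quotient-space-dimension-e-family} already guarantee, so no additional argument is needed.
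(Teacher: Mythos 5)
Your argument is exactly the paper's: the corollary is obtained by substituting $\abs{T_0(\mathcal{X}^2)} = \abs{\mathcal{X}}$ and $\abs{\mathcal{X}^2} = \abs{\mathcal{X}}^2$ into the identity $\abs{T(\mathcal{E})} = \frac{\abs{\mathcal{E}} + \abs{T_0(\mathcal{E})}}{2} - 1$ and invoking Theorem~\ref{theorem:quotient-space-dimension-e-family}~$(ii)$, which is all the paper does. Your additional edge-measure sanity check (counting free entries of a symmetric stochastic $Q$, or equivalently $\abs{\mathcal{X}}-1$ parameters for $\pi$ plus $\abs{\mathcal{X}}(\abs{\mathcal{X}}-1)/2$ off-diagonal parameters) is correct but not needed for the claim.
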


\begin{theorem}
\label{theorem:reversible-basis}
The family of functions $g_{i j} = \delta_i \trn \delta_j + \delta_j \trn \delta_i$,
for $(i,j) \in T(\mathcal{E})$,
forms a basis of $\mathcal{G}\rev (\mathcal{X}, \mathcal{E})$.
\end{theorem}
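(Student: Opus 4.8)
The plan is to verify the three standard requirements: (a) each $g_{ij}$ for $(i,j) \in T(\mathcal{E})$ is log-reversible, hence lies in $\mathcal{F}\rev(\mathcal{X}, \mathcal{E})$ and so descends to an element of $\mathcal{G}\rev(\mathcal{X}, \mathcal{E})$; (b) the images of the $g_{ij}$ in $\mathcal{G}\rev(\mathcal{X}, \mathcal{E})$ are linearly independent; (c) they span. Since Theorem~\ref{theorem:quotient-space-dimension-e-family}$(i)$ already tells us $\dim \mathcal{G}\rev(\mathcal{X}, \mathcal{E}) = \abs{T(\mathcal{E})}$, and the number of proposed basis vectors is exactly $\abs{T(\mathcal{E})}$, it suffices to prove either independence or spanning — I would do independence, as it seems the cleaner of the two, and then invoke the dimension count to conclude.

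For (a): each $g_{ij} = \delta_i\trn \delta_j + \delta_j\trn \delta_i$ is a symmetric function on $\mathcal{X}^2$, and any symmetric function is trivially log-reversible (the two products in Definition~\ref{definition:revesible-function}, or rather the two sums, agree term by term after re-indexing $\gamma^\star$); alternatively this is immediate from $\mathcal{F}\sym(\mathcal{X}, \mathcal{E}) \subset \mathcal{F}\rev(\mathcal{X}, \mathcal{E})$, which follows from Corollary~\ref{corollary:easy-log-reversibility} with $f = 0$. One should also note $g_{ij}$ is supported on $\mathcal{E}$ precisely because $(i,j) \in T(\mathcal{E})$ forces $(i,j) \in \mathcal{E}$ and hence, by symmetry of $\mathcal{E}$, also $(j,i) \in \mathcal{E}$.

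For (b): using the isomorphism \eqref{eq:aternative-quotient-space}, $\mathcal{G}\rev(\mathcal{X}, \mathcal{E}) \cong \mathcal{F}\sym(\mathcal{X}, \mathcal{E})/\R$, it is enough to show that the $g_{ij}$, $(i,j) \in T(\mathcal{E})$, together with the constant function $\unit$ (the all-ones function on $\mathcal{E}$, which spans the $\R$ being quotiented), form a linearly independent set in $\mathcal{F}\sym(\mathcal{X}, \mathcal{E})$ — equivalently, that they span, since $\abs{T(\mathcal{E})} + 1 = \abs{T_+(\mathcal{E})} + \abs{T_0(\mathcal{E})} = \dim\mathcal{F}\sym(\mathcal{X}, \mathcal{E})$ by \eqref{equation:T-cardinalities}. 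Spanning of $\mathcal{F}\sym(\mathcal{X}, \mathcal{E})$ by $\{g_{ij} : (i,j) \in T_0(\mathcal{E}) \cup T_+(\mathcal{E})\}$ is clear: the functions $\delta_i\trn\delta_j + \delta_j\trn\delta_i$ with $i \ge j$ and $(i,j)\in\mathcal{E}$ pick out each symmetric off-diagonal "slot" once and each diagonal "slot" once (on the diagonal $g_{ii} = 2\,\delta_i\trn\delta_i$), and these are exactly a basis of the symmetric-matrix space. The only subtlety is that $T(\mathcal{E})$ omits one element, namely $(m, x_\star)$; so I must argue that $g_{m x_\star}$ lies in the span of $\unit$ and the remaining $g_{ij}$. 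This is where the constant function enters: $\sum_{(i,j)\in T_0(\mathcal{E})} g_{ij} = 2\sum_{i}\delta_i\trn\delta_i$ restricted to $\mathcal{E}$ is not $\unit$, so instead one observes directly that $\unit = \sum_{(i,j)\in T_0(\mathcal{E})}\tfrac12 g_{ij} + \sum_{(i,j)\in T_+(\mathcal{E})} g_{ij}$ on $\mathcal{E}$ (each edge of $\mathcal{E}$ gets counted exactly once on the right, using $T_- , T_+, T_0$ partition $\mathcal{E}$ and $g_{ij}$ with $(i,j)\in T_+$ covers both $(i,j)$ and its mirror $(j,i)\in T_-$), and therefore $g_{m x_\star} = 2\big(\unit - \sum_{(i,j)\in T_0(\mathcal{E})}\tfrac12 g_{ij} - \sum_{(i,j)\in T_+(\mathcal{E})\setminus\{(m,x_\star)\}} g_{ij}\big)$ if $(m,x_\star)\in T_+$, with the analogous identity if $x_\star = m$ is excluded on the diagonal instead. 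Hence $\{g_{ij} : (i,j)\in T(\mathcal{E})\} \cup \{\unit\}$ spans $\mathcal{F}\sym(\mathcal{X}, \mathcal{E})$, and by the dimension count it is a basis; quotienting by $\R\unit$ shows $\{g_{ij} : (i,j)\in T(\mathcal{E})\}$ is a basis of $\mathcal{G}\rev(\mathcal{X}, \mathcal{E})$.

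The main obstacle is purely bookkeeping: carefully tracking which ordered pairs $T(\mathcal{E})$ contains versus which it omits, and confirming that the single omitted pair $(m, x_\star)$ is exactly compensated by the one-dimensional constant subspace being quotiented out — i.e. that the "off by one" in $\abs{T(\mathcal{E})} = \abs{T_+(\mathcal{E})} + \abs{T_0(\mathcal{E})} - 1$ matches the "$-1$" in $\mathcal{F}\sym/\R$. Once the counting is pinned down, everything else is immediate from the identifications already established in the proof of Theorem~\ref{theorem:quotient-space-dimension-e-family}.
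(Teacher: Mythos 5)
Your proof is correct, but it follows a different route from the paper's main argument --- in fact it is essentially the alternative sketched in the remark immediately following Theorem~\ref{theorem:reversible-basis} (work in $\mathcal{F}\sym(\mathcal{X},\mathcal{E})$ and invoke the identification \eqref{eq:aternative-quotient-space}). The paper proves linear independence directly in $\mathcal{G}\rev(\mathcal{X},\mathcal{E}) = \mathcal{F}\rev/\mathcal{N}$: assuming $\sum \alpha_{ij} g_{ij}$ equals some $f(x')-f(x)+c$, it evaluates at the two omitted edges $(m,x_\star)$ and $(x_\star,m)$ to force $c=0$, then uses that the combination is simultaneously symmetric and skew-symmetric to conclude it vanishes in the ambient space, and finishes with independence of the $g_{ij}$ in $\mathcal{F}(\mathcal{X},\mathcal{E})$ plus the dimension count. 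You instead prove a spanning statement: $\{g_{ij} : (i,j)\in T(\mathcal{E})\}\cup\{\unit\}$ spans $\mathcal{F}\sym(\mathcal{X},\mathcal{E})$ via the identity $\unit = \sum_{T_0}\tfrac12 g_{ii} + \sum_{T_+} g_{ij}$, use $\abs{T(\mathcal{E})}+1 = \dim\mathcal{F}\sym(\mathcal{X},\mathcal{E})$ to upgrade spanning to a basis, and then pass to the quotient by $\R\unit$; this is clean and buys a slightly more transparent explanation of why exactly one pair must be dropped from $T_0(\mathcal{E})\cup T_+(\mathcal{E})$ (it is absorbed by the constant direction being quotiented out), at the price of leaning on the isomorphism $\mathcal{G}\rev \cong \mathcal{F}\sym/\R$, whereas the paper's proof works with $\mathcal{N}$ directly. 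Two cosmetic points: your displayed identity for $g_{m x_\star}$ carries a spurious factor $2$ --- the relation $\unit = \sum_{(i,i)\in T_0(\mathcal{E})}\tfrac12 g_{ii} + \sum_{(i,j)\in T_+(\mathcal{E})} g_{ij}$ gives $g_{m x_\star} = \unit - \sum_{(i,i)\in T_0(\mathcal{E})}\tfrac12 g_{ii} - \sum_{(i,j)\in T_+(\mathcal{E})\setminus\{(m,x_\star)\}} g_{ij}$ without it (this does not affect the span argument); and the case ``$x_\star = m$'' you hedge against cannot occur, since the paper observes that strong connectivity forces $x_\star \neq m$, so the omitted pair always lies in $T_+(\mathcal{E})$.
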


\begin{proof}
We begin by proving the independence of the family in the quotient space $\mathcal{G} \rev (\mathcal{X}, \mathcal{E})$.
Since
$g_{ij}$ is symmetric in the 
sense that $g_{ij} = g_{ij} \trn$,
it trivially verifies the log-reversibility property, thus belongs to $\mathcal{G} \rev (\mathcal{X}, \mathcal{E})$.
Let now $g \in \mathcal{G} \rev(\mathcal{X}, \mathcal{E})$ be such that
$$g = \sum_{(i, j) \in T(\mathcal{E})} \alpha_{i j} g_{ij},$$
with $\alpha_{ij} \in \R$, for any $(i,j) \in T(\mathcal{E})$, and suppose that $g = 0_{\mathcal{G} \rev (\mathcal{X}, \mathcal{E})}$.
Our first step is to observe that necessarily $g = 0_{\mathcal{F} (\mathcal{X},\mathcal{E} )}$, 
i.e. $g$ must be the null vector in the ambient space.
Let us suppose for contradiction that 
there exist $(f, c) \in (\R^\mathcal{X}, \R)$ such that
$g(x, x') = f(x') - f(x) + c$ and
 either
$c \neq 0$ or $f$ is not constant over $\mathcal{X}$.
Since by definition, $(m, x_\star), (x_\star, m) \not \in T(\mathcal{E})$, 
\begin{equation*}
\begin{split}
0 = g(m, x_\star) = f(m) - f(x_\star) + c, \\    
0 = g(x_\star, m) = f(x_\star) - f(m) + c,
\end{split}
\end{equation*}
therefore summing the latter equalities yields $c = 0$, thus $f$ cannot be constant. 
But then, $g$ is both symmetric and skew-symmetric,
which leads to a contradiction,
and $g = 0_{\mathcal{F}(\mathcal{X}, \mathcal{E})}$.
Since the family $\set{g_{ij} \colon (i,j) \in T(\mathcal{E})}$ is independent in the ambient space $\mathcal{F}(\mathcal{X}, \mathcal{E})$, 
the coefficients $\alpha_{ij}, (i,j) \in T(\mathcal{E})$ must be null, 
and as result, the family is also linearly independent in $\mathcal{G} \rev(\mathcal{X}, \mathcal{E})$.
Finally, since from Theorem~\ref{theorem:quotient-space-dimension-e-family}, $\abs{T(\mathcal{E})} = \dim \mathcal{G} \rev (\mathcal{X}, \mathcal{E})$,
the family is maximally independent, hence constitutes 
a basis of the quotient vector space.
\end{proof}
\GW{
\begin{remark}
An alternative way of showing the linear independence of the family $\set{g_{ij} \colon (i,j) \in T(\mathcal{E})}$ in Theorem~\ref{theorem:reversible-basis} consists in verifying that $(i)$ the family is independent in $\mathcal{F}\sym$, $(ii)$ $\R \ \not \subset \spann \set{g_{ij} \colon (i,j) \in T(\mathcal{E})}$, and then invoking $\eqref{eq:aternative-quotient-space}$.
\end{remark}
}

\subsection{Parametrization of the manifold of reversible kernels}
\label{section:parametrization-reversible-kernels}

Recall that from \citep[Example~1]{nagaoka2017exponential}, in the complete graph case ($\mathcal{E} = \mathcal{X}^2$), we can find an explicit parametrization for $\mathcal{W}(\mathcal{X}, \mathcal{X}^2)$.
Indeed, picking any $x_\star \in \mathcal{X}$, we can easily verify that for the two cases where $x'= x_\star$ and $x' \neq x_\star$, 
\begin{equation*}
\begin{split}
\log P(x, x') = &\sum_{i = 1}^{\abs{\mathcal{X}}} \sum_{\substack{j = 1 \\ j \neq x_\star }}^{\abs{\mathcal{X}}} \log \frac{P(i, j) P(j, x_\star)}{P(i, x_\star)P(x_\star, x_\star)}  \delta_i(x)\delta_j(x') \\ &+ \log P(x, x_\star) - \log P(x', x_\star) + \log P(x_\star, x_\star).
\end{split}
\end{equation*}
In the remainder of this section, we show how to derive a similar parametrization for $\mathcal{W} \rev (\mathcal{X}, \mathcal{E})$. We start by recalling the definition of the expectation 
parameter of an exponential family of kernels. For an e-family $\mathcal{V}_e$, following the notation of Definition~\ref{definition:e-family-parametric}, we define 
\begin{equation*}
\eta_i(\theta) \eqdef Q_\theta[g_i] = \sum_{x, x' \in \mathcal{X}} Q_\theta(x, x') g_i(x,x'),
\end{equation*}
and call $\eta = (\eta_1, \dots, \eta_d)$ the expectation parameter of the family.
We will first derive $\eta$ and later convert to the natural parameter $\theta$ using the following lemma.
\begin{lemma}
\label{lemma:chart-transition-maps}
For a given exponential family, we can express the chart transition maps between the expectation and natural parameters $\theta \circ \eta^{-1}$ and $\eta \circ \theta^{-1}$.
Extending the notation at Lemma~\ref{lemma:q-expectation},
\begin{itemize}
    \item[$(i)$] \begin{equation*}\eta_i(\theta) 
    = Q_\theta[g_i] 
    = \sum_{x, x' \in \mathcal{X}} Q_\theta(x,x') g_i(x,x')
    .\end{equation*}
    \item[$(ii)$] \begin{equation*}
    \begin{split}
        \theta^i(\eta) &= \left(\frac{\partial}{\partial \eta_i} Q_\eta  \right) \left[ \log P_\eta - K \right]   \\
        &= \sum_{x, x' \in \mathcal{X}} \left(\frac{\partial}{\partial \eta_i} Q_\eta(x, x') \right) \left(\log P_\eta(x, x') - K(x, x') \right)
        .
    \end{split}
    \end{equation*}
    In particular, when the carrier kernel verifies $K = 0$, we more simply have $$\theta^i(\eta) = \left(\frac{\partial}{\partial \eta_i} Q_\eta  \right) \left[ \log P_\eta \right].$$
\end{itemize}
\end{lemma}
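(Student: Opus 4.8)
The plan is to treat the two identities separately, noting that $(i)$ is merely the definition of the expectation parameter restated, so the real content is in $(ii)$, which is the standard duality between the $e$-connection and $m$-connection specialized to the present linear-algebraic setting. First I would observe that for an e-family with generators $g_1,\dots,g_d$, the expression \eqref{eq:e-family-expression} gives, for each fixed member, $\log P_\eta(x,x') - K(x,x') = \sum_{j=1}^d \theta^j g_j(x,x') + R_\eta(x') - R_\eta(x) - \psi_\eta$, i.e. the function $\log P_\eta - K$ lies in the affine subspace $\spann\{g_1,\dots,g_d\} + \mathcal{N}(\mathcal{X},\mathcal{E})$ of $\mathcal{F}(\mathcal{X},\mathcal{E})$ (using the coset language from Section~\ref{section:preliminaries}). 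Then I would pair this with the tangent vector $\frac{\partial}{\partial \eta_i} Q_\eta$, which is a signed measure on $\mathcal{E}$. The key mechanical facts are: (a) $\frac{\partial}{\partial \eta_i} Q_\eta$ annihilates every function of the form $f(x') - f(x) + c$, because $Q_\eta$ always has equal row- and column-marginals \eqref{eq:equivalent-edge-measure-set} and total mass $1$, so differentiating kills the constant and the shift parts (this is exactly Lemma~\ref{lemma:q-expectation}-$(iii)$ applied to the derivative, plus $\partial_{\eta_i}(Q_\eta[\mathbf 1]) = \partial_{\eta_i} 1 = 0$); and (b) $\frac{\partial}{\partial \eta_i} Q_\eta [g_j] = \frac{\partial}{\partial \eta_i}\eta_j = \delta_{ij}$ by definition of $\eta$ and part $(i)$.

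Combining (a) and (b): applying $\frac{\partial}{\partial \eta_i} Q_\eta$ to the identity $\log P_\eta - K = \sum_j \theta^j g_j + (R_\eta(\cdot')-R_\eta(\cdot)) - \psi_\eta$ makes the $R_\eta$- and $\psi_\eta$-terms vanish by (a), leaving $\left(\frac{\partial}{\partial \eta_i} Q_\eta\right)[\log P_\eta - K] = \sum_j \theta^j \delta_{ij} = \theta^i$, which is the claimed formula. The $K=0$ specialization is immediate. For the two chart-transition maps themselves: $\eta \circ \theta^{-1}$ is the map $\theta \mapsto (\eta_i(\theta))_i$ given by $(i)$, and $\theta \circ \eta^{-1}$ is $\eta \mapsto (\theta^i(\eta))_i$ given by $(ii)$; that these are genuinely inverse to one another follows from the fact (cited from \citet{nagaoka2017exponential}) that $\theta$ and $\eta$ are both global charts on the same $d$-dimensional manifold, so I would not belabor it beyond a sentence.

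The main obstacle, and the only place requiring care, is making rigorous the manipulation ``apply the signed measure $\frac{\partial}{\partial\eta_i}Q_\eta$ to a coset representative'' — i.e., checking that the pairing $\langle \partial_{\eta_i} Q_\eta, h\rangle$ is well-defined on $\mathcal{G}(\mathcal{X},\mathcal{E}) = \mathcal{F}(\mathcal{X},\mathcal{E})/\mathcal{N}(\mathcal{X},\mathcal{E})$, which is precisely point (a) above and should be stated as such. A secondary subtlety is justifying differentiability of $\eta \mapsto Q_\eta$ (equivalently $\eta \mapsto P_\eta$), so that $\frac{\partial}{\partial\eta_i}Q_\eta$ exists; this follows from smoothness of the PF root and eigenvectors as functions of the matrix entries (analytic perturbation theory for simple eigenvalues), together with the smoothness of the natural-to-expectation chart change established in \citet{nagaoka2017exponential}. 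I expect the write-up of $(ii)$ to be three or four lines once these two well-definedness points are dispatched.
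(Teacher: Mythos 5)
Your proposal is correct, but it proves part $(ii)$ by a genuinely different route than the paper. You pair the e-family representation $\log P_\eta - K = \sum_j \theta^j g_j + R_\eta(x') - R_\eta(x) - \psi_\eta$ directly with the signed measure $\partial_{\eta_i} Q_\eta$, and conclude via two facts: the differentiated measure annihilates $\mathcal{N}(\mathcal{X},\mathcal{E})$ (because $Q_{\eta}$ has unit mass and equal row/column marginals for every $\eta$, so these linear constraints survive differentiation), and the biorthogonality $(\partial_{\eta_i} Q_\eta)[g_j] = \partial_{\eta_i}\eta_j = \delta_{ij}$. The paper instead routes through the dual potential: it computes $\kl{P_\theta}{P_0} = \sum_i \theta^i\eta_i - \psi_\theta + \psi_0$, identifies $\varphi(\eta) = \sum_i\theta^i\eta_i - \psi_\theta = Q_\eta[\log P_\eta] - Q_\eta[K]$, invokes the Legendre-duality relation $\partial_{\eta_i}\varphi = \theta^i$ from \citet{nagaoka2017exponential}, and then differentiates $Q_\eta[\log P_\eta] - Q_\eta[K]$, killing $Q_\eta\left[\partial_{\eta_i}\log P_\eta\right]$ by stochasticity of $P_\eta$ and $Q_\eta\left[\partial_{\eta_i}K\right]$ because $K$ is fixed. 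In effect both arguments use the same two mechanical ingredients (the marginal/mass constraints on $Q_\eta$ and row-stochasticity of $P_\eta$), but in dual positions: the paper differentiates the integrand while holding the measure, whereas you differentiate the measure while holding the integrand. Your version is more self-contained, since it does not need the cited identity $\partial_{\eta_i}\varphi = \theta^i$ nor the divergence computation; the paper's version buys, as a by-product, the explicit expression of the dual potential $\varphi(\eta)$ as the carrier-adjusted negative entropy, which is of independent geometric interest. Your two flagged well-definedness points (the pairing descending to the quotient $\mathcal{G}(\mathcal{X},\mathcal{E})$, and smoothness of $\eta \mapsto Q_\eta$ via PF perturbation theory together with the family being well-behaved so that $\eta$ is a valid chart) are exactly the right things to dispatch, and the paper relies on the same implicit assumptions.
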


\begin{proof}
It is well-known that $\eta_i(\theta) = \frac{\partial}{\partial \theta^i} \psi_\theta = Q_\theta [g_i]$ \citep[Lemma~5.1]{hayashi2014information}, \citep[Theorem~4]{nagaoka2017exponential}, 
\citep[(28)]{nakagawa1993converse}, therefore we only need to show $(ii)$.
Let $g_1, g_2, \dots, g_d$ be a collection of independent functions of
$\mathcal{G} (\mathcal{X}, \mathcal{E})$.
Consider the exponential family as in Definition~\ref{definition:e-family-parametric}.
Recall that for two transition kernels $P_1, P_2$ respectively 
irreducible over $(\mathcal{X}, \mathcal{E}_1)$ and  $(\mathcal{X}, \mathcal{E}_2)$,
and with stationary distributions $\pi_1$ and $\pi_2$,
the information divergence of $P_1$ from $P_2$ is given by
\begin{equation}
\label{equation:kl-divergence}
    \kl{P_1}{P_2} = \begin{cases} \sum_{(x,x') \in \mathcal{E}_1} \pi_1(x) P_1(x, x') \log \frac{P_1(x, x')}{P_2(x, x')}, &\text{ when } \mathcal{E}_1 \text{ \GW{$\subset$} } \mathcal{E}_2,
    \\ \infty &\text{ otherwise}.
    \end{cases}
\end{equation}
Writing $P_0$ for $P_\theta$ when $\theta = 0$,
\begin{equation*}
\begin{split}
    & \kl{P_\theta}{P_0} = \sum_{x, x' \in \mathcal{X}} Q_\theta(x,x') \log \frac{P_\theta(x, x')}{P_0(x, x')} \\
    &= \sum_{x, x' \in \mathcal{X}} Q_\theta(x,x') \left[ \sum_{i = 1}^{d}\theta^i g_i(x,x') + R_\theta(x') - R_\theta(x) - \psi_\theta - R_0(x') + R_0(x) + \psi_0 \right] \\
    &= \sum_{i = 1}^{d} \theta^i \eta_i - \psi_\theta + \psi_0,
\end{split}
\end{equation*}
where for the last equality we used $(i)$ of the present lemma and  Lemma~\ref{lemma:q-expectation}-$(iii)$. Moreover, by a direct computation,
\begin{equation*}
    Q_\theta \left[ - \log P_0 \right] = \psi_0 - Q_\theta[K].
\end{equation*}
Thus, the potential function is given by
\begin{equation}
\label{equation:negative-entropy}
\begin{split}
    \varphi(\eta) &\eqdef \sum_{i = 1}^{d} \theta^i \eta_i - \psi_\theta =  Q_\theta \left[\log P_\theta\right] - Q_\theta[K] = Q_\eta \left[ \log P_\eta  \right] - Q_\eta[K].
\end{split}
\end{equation}
By taking the derivative, we recover $\frac{\partial}{\partial \eta_i} \varphi(\eta) = \theta^i(\eta)$ \citep[(17)]{nagaoka2017exponential}.
Moreover, from \eqref{equation:negative-entropy}, we have that 
\begin{equation*}
\begin{split}
\frac{\partial}{\partial \eta_i} \varphi(\eta) &= \frac{\partial}{\partial \eta_i} \left( Q_\eta \left[ \log P_\eta  \right] - Q_\eta[K] \right) \\
&= \left(\frac{\partial}{\partial \eta_i} Q_\eta \right) \left[ \log P_\eta - K \right] + Q_\eta \left[ \frac{\partial \log P_\eta}{\partial \eta_i}  \right]  -  Q_\eta \left[ \frac{\partial K }{\partial \eta_i}  \right] \\
&= \left(\frac{\partial}{\partial \eta_i} Q_\eta \right) \left[ \log P_\eta - K  \right],  \\
\end{split}
\end{equation*}
where for the last equality, we used the fact that $Q_\eta [ \partial K / \partial \eta_i] = 0$, and that from $P_\eta$ being stochastic,
$$\sum_{x,x' \in \mathcal{X}} Q_\eta(x,x') \frac{\partial}{\partial \eta_i} \log P_\eta(x,x') = \sum_{x,x' \in \mathcal{X}} \pi_\eta(x) \frac{\partial}{\partial \eta_i} P_\eta(x,x') = 0.$$
This finishes proving $(ii)$ of the lemma.
\end{proof}

\begin{theorem}
\label{theorem:explicit-parametrization}
Let $P \in \mathcal{W} \rev(\mathcal{X}, \mathcal{E})$, with stationary distribution $\pi$. 
Using the basis $g_{ij} = \delta_i \trn \delta_j  + \delta_j \trn \delta_i$,
we can write $Q$, the edge measure matrix associated with $P$, as
a member of the m-family of reversible kernels,
\begin{equation*}
\begin{split}
    Q &= \frac{g_\star}{2} + \sum_{(i,j) \in T(\mathcal{E})} (g_{ij} - g_\star)\frac{Q(i,j)}{1 + \delta_i(j)},
\end{split}
\end{equation*}
where 
$g_\star = \delta_{m} \trn \delta_{x_\star} + \delta_{x_\star} \trn \delta_{m}$,
and we can write $P$ as a member of the e-family,
\begin{equation*}
\begin{split}
\log P(x, x') &=  \sum_{(i,j) \in T(\mathcal{E})} \frac{1}{2} \log \frac{P(i, j)P(j, i)}{P(m, x_\star )P(x_\star, m )}\left(\frac{g_{ij}(x, x') }{1 + \delta_i(j)}\right) \\
&+ \frac{1}{2} \log \pi(x') - \frac{1}{2} \log \pi(x) + \frac{1}{2} \log P(m, x_\star )P(x_\star, m ),
\end{split}
\end{equation*}
when $(x,x') \in \mathcal{E}$, $P(x,x') = 0$ otherwise, and
where $x_\star = \argmin_{x \in \mathcal{X}} \set{ (m, x) \in \mathcal{E} }$.
\end{theorem}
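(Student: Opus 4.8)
The plan is to establish the edge-measure identity first and then read the kernel identity off it, both by expanding in a convenient coordinate basis of the space $\mathcal{F}\sym(\mathcal{X},\mathcal{E})$ of symmetric functions. The key observation is that the rescaled functions $g_{ij}/(1+\delta_i(j))$ for $(i,j)\in\mathcal{E}$ with $i\ge j$ form the \emph{standard} coordinate basis of $\mathcal{F}\sym(\mathcal{X},\mathcal{E})$: the element attached to an unordered pair $\{i,j\}$ equals $1$ at $(i,j)$ and $(j,i)$ and vanishes elsewhere, so every symmetric $h$ satisfies $h=\sum_{(i,j)\in\mathcal{E},\,i\ge j}h(i,j)\,g_{ij}/(1+\delta_i(j))$. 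Since $x_\star\ne m$ by irreducibility, the index set splits as $\{(i,j)\in\mathcal{E}:i\ge j\}=T(\mathcal{E})\sqcup\{(m,x_\star)\}$, with $g_\star=g_{m,x_\star}/(1+\delta_m(x_\star))=\delta_m\trn\delta_{x_\star}+\delta_{x_\star}\trn\delta_m$ the basis element of the excluded pair; moreover the function equal to $1$ on $\mathcal{E}$ and $0$ off it, call it $\unit_\mathcal{E}$, is the sum of the whole basis, so $g_\star=\unit_\mathcal{E}-\sum_{(i,j)\in T(\mathcal{E})}g_{ij}/(1+\delta_i(j))$.

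For the edge-measure formula, $P$ reversible means $Q=\diag(\pi)P$ is symmetric, so by the expansion above $Q=Q(m,x_\star)\,g_\star+\sum_{(i,j)\in T(\mathcal{E})}Q(i,j)\,g_{ij}/(1+\delta_i(j))$. It remains only to eliminate the distinguished coefficient $Q(m,x_\star)$: summing all entries of $Q$ and using symmetry gives $1=\sum_{x,x'}Q(x,x')=2\sum_{(i,j)\in\mathcal{E},\,i\ge j}Q(i,j)/(1+\delta_i(j))$, whence $Q(m,x_\star)=\tfrac12-\sum_{(i,j)\in T(\mathcal{E})}Q(i,j)/(1+\delta_i(j))$. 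Substituting this back and regrouping produces exactly $Q=\tfrac{g_\star}{2}+\sum_{(i,j)\in T(\mathcal{E})}(g_{ij}-g_\star)\,Q(i,j)/(1+\delta_i(j))$. Because each $g_{ij}-g_\star$ has zero total mass while $g_\star/2$ has total mass one, and because the $g_{ij}$, $(i,j)\in T(\mathcal{E})$, are linearly independent (as in the proof of Theorem~\ref{theorem:reversible-basis}), this simultaneously exhibits $Q$ in the form of Definition~\ref{definition:m-family-parametric}-$(ii)$ with mixture coordinates $\xi^{ij}=Q(i,j)$, confirming the m-family structure and its dimension $\abs{T(\mathcal{E})}$ from Theorem~\ref{theorem:quotient-space-dimension-e-family}.

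For the kernel formula, I would start from detailed balance $\pi(x)P(x,x')=\pi(x')P(x',x)$, which on an edge $(x,x')\in\mathcal{E}$ (where $P(x',x)>0$ as well, since $\mathcal{E}=\mathcal{E}^\star$) gives $\log P(x,x')-\log P(x',x)=\log\pi(x')-\log\pi(x)$; adding this to the trivial identity $\log P(x,x')+\log P(x',x)=2S(x,x')$, with $S(x,x')\eqdef\tfrac12\log\bigl(P(x,x')P(x',x)\bigr)$, yields $\log P(x,x')=S(x,x')+\tfrac12\log\pi(x')-\tfrac12\log\pi(x)$. Now $S\in\mathcal{F}\sym(\mathcal{X},\mathcal{E})$, so expanding it and splitting off the $(m,x_\star)$ term gives $S=S(m,x_\star)\,g_\star+\sum_{(i,j)\in T(\mathcal{E})}S(i,j)\,g_{ij}/(1+\delta_i(j))$; substituting $g_\star=\unit_\mathcal{E}-\sum_{(i,j)\in T(\mathcal{E})}g_{ij}/(1+\delta_i(j))$, absorbing $S(m,x_\star)\unit_\mathcal{E}$ into the additive constant $\tfrac12\log\bigl(P(m,x_\star)P(x_\star,m)\bigr)$ (valid on $\mathcal{E}$), and using $S(i,j)-S(m,x_\star)=\tfrac12\log\tfrac{P(i,j)P(j,i)}{P(m,x_\star)P(x_\star,m)}$, gives precisely the stated expression for $\log P(x,x')$ on $\mathcal{E}$, with $P(x,x')=0$ off $\mathcal{E}$; here $x_\star=\argmin_{x\in\mathcal{X}}\{(m,x)\in\mathcal{E}\}$ is simply the choice fixed in the definition of $T(\mathcal{E})$.

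I expect the only real obstacle to be bookkeeping rather than any conceptual difficulty: one must carry the factor $1+\delta_i(j)$ uniformly through diagonal and off-diagonal edges, verify the partition $\{(i,j)\in\mathcal{E}:i\ge j\}=T(\mathcal{E})\sqcup\{(m,x_\star)\}$ together with the two auxiliary identities for $g_\star$ and $\unit_\mathcal{E}$, and check that the detailed-balance rearrangement is legitimate on every edge (it is, because $\mathcal{E}$ is symmetric, so both sides are strictly positive there). An alternative, heavier route is to first compute the expectation parameters $\eta_{ij}=Q[g_{ij}]=2Q(i,j)$ and then recover the natural parameters via $\theta^{ij}=\bigl(\tfrac{\partial}{\partial\eta_{ij}}Q_\eta\bigr)[\log P_\eta]$ from Lemma~\ref{lemma:chart-transition-maps}-$(ii)$ with $K=0$; I would mention this but prefer the direct computation above, which is shorter and makes the m-family statement transparent.
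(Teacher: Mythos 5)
Your proposal is correct, and its second half takes a genuinely different route from the paper. For the edge-measure identity the two arguments are essentially the same computation in different clothing: the paper reads off the expectation coordinates $\eta_{ij}=Q_\theta[g_{ij}]=2Q(i,j)$ via Lemma~\ref{lemma:chart-transition-maps}-$(i)$ and then eliminates the distinguished coordinate using $Q_\eta\in\mathcal{P}(\mathcal{X}^2)$, which is exactly your expansion of the symmetric matrix $Q$ in the standard basis of $\mathcal{F}\sym(\mathcal{X},\mathcal{E})$ together with the total-mass constraint. For the kernel identity, however, the paper stays inside the information-geometric machinery: it differentiates $Q_\eta$ in $\eta_{ij}$, invokes the chart-transition formula $\theta^{ij}(\eta)=\bigl(\partial_{\eta_{ij}}Q_\eta\bigr)[\log P_\eta]$ of Lemma~\ref{lemma:chart-transition-maps}-$(ii)$ (with $K=0$) to get $\theta^{ij}=\tfrac{1}{2(1+\delta_i(j))}\log\tfrac{P(i,j)P(j,i)}{P(m,x_\star)P(x_\star,m)}$, and then uses Perron--Frobenius theory on the symmetric matrix $\widetilde{P}_\theta$ to identify $\exp[R_\theta]=\sqrt{\pi}$ and $\exp\psi_\theta=(P(m,x_\star)P(x_\star,m))^{-1/2}$. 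You instead verify the same identity directly from detailed balance, writing $\log P(x,x')=S(x,x')+\tfrac12\log\pi(x')-\tfrac12\log\pi(x)$ with $S=\tfrac12\log[P\circ P\trn]$ symmetric and expanding $S$ in the same basis; this bypasses Lemma~\ref{lemma:chart-transition-maps} and the PF argument entirely and is shorter and more elementary (the only points to keep straight are the ones you flag: the partition $\{(i,j)\in\mathcal{E}:i\geq j\}=T(\mathcal{E})\sqcup\{(m,x_\star)\}$, the identity $g_\star=\unit_{\mathcal{E}}-\sum_{T(\mathcal{E})}g_{ij}/(1+\delta_i(j))$, and positivity of both $P(x,x')$ and $P(x',x)$ on a symmetric $\mathcal{E}$). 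What the paper's heavier route buys is the explicit dual-coordinate picture: it shows that the displayed coefficients are precisely the natural parameters conjugate to the expectation parameters $\eta_{ij}$ of the chosen generators, rather than merely an algebraic identity for $\log P$; your closing remark correctly identifies this alternative as the route you chose not to take.
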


\begin{proof}
Let us consider the basis
$$g_{i j} = \delta_i \trn \delta_j + \delta_j \trn \delta_i,$$
and taking $K = 0$, we are looking for a parametrization of the type
\begin{equation*}
\begin{split}
    \widetilde{P}_\theta(x, x') & = \exp \left( \sum_{(i,j) \in T(\mathcal{E})} \theta^{ij} g_{ij}(x, x') \right), \\
    P_\theta(x, x') &= \stoch(\widetilde{P}_\theta)(x, x') =  \widetilde{P}_\theta(x, x') \exp \left( R_\theta(x') - R_\theta(x) - \psi_\theta \right),
\end{split}
\end{equation*}
where $\exp \psi_\theta$ and $\exp[R_\theta]$ are respectively 
the PF root and right PF eigenvector of $\widetilde{P}_\theta$.
We first derive a parametrization of the edge measure $Q_\eta$ as a member of an m-family \GW{(following Definition~\ref{definition:m-family-parametric}-$(ii)$} with respect to the expectation parameter $\eta$).
For $(i, j) \in \mathcal{X}^2$, by Lemma~\ref{lemma:chart-transition-maps}-$(i)$,
$$\eta_{ij} = Q_\eta[g_{ij}] = Q_\eta(i,j) + Q_\eta(j, i) = 2Q_\eta( i,j),$$
\GW{and thus, from symmetry of $Q_\eta$ and since $Q_\eta \in \mathcal{P}(\mathcal{X}^2)$,}
\begin{equation*}
\begin{split}
Q_\eta(i, j) = 
    \begin{cases}
    0 & \text{ when } (i,j) \not \in \mathcal{E} \\
    \eta_{ij} /2 & \text{ when } (i, j) \in T(\mathcal{E}), i \neq j \\
    \eta_{ji}/2 & \text{ when } (j, i) \in T(\mathcal{E}), i \neq j \\
    \eta_{ii}/2 & \text{ when } (i, i) \in T_0(\mathcal{E}) \\
    \text{ \GWS{$\frac{1}{2}\left(1 - \sum_{(i,j) \in T(\mathcal{E})}\frac{\eta_{ij}}{1 + \delta_i(j)} \right)$}}  & \text{ when } (i, j) \in \set{ (m, x_\star), (x_\star, m) },
    \end{cases}
\end{split}
\end{equation*}
and more compactly, for $(x, x') \in \mathcal{X}^2$,
\begin{equation*}
\begin{split}
    Q_\eta &= \frac{g_\star}{2} + \sum_{(i,j) \in T(\mathcal{E})} \frac{g_{ij} - g_\star}{2(1 + \delta_i(j))}\eta_{ij},
\end{split}
\end{equation*}
where  
$g_\star$ is defined as in the statement of the theorem.
We differentiate by $\eta_{i j}$ for $(i,j) \in T(\mathcal{E})$, to obtain
\begin{equation*}
    \frac{\partial Q_\eta}{\partial \eta_{ij}} = \frac{ g_{ij} - g_\star}{2(1 + \delta_{i}(j))}.
\end{equation*}
Invoking $(ii)$ of Lemma~\ref{lemma:chart-transition-maps}, we convert the expectation parametrization to a natural one, 
\begin{equation*}
    \theta^{ij}(\eta) = \sum_{(x, x') \in \mathcal{X}} \left( \frac{\partial }{\partial \eta_{ij}}Q_\eta(x,x') \right) \log P(x, x'),
\end{equation*}
so that
\begin{equation*}
\begin{split}
    \theta^{ij}(\eta) &= \frac{1}{1 + \delta_{i}(j)} \sum_{(x, x') \in \mathcal{X}} \left( \frac{ g_{ij}(x,x') -  g_\star(x, x') }{2}\right) \log P(x, x') \\
    &= \frac{1}{2(1 + \delta_{i}(j))} \log \frac{P(i, j)P(j, i)}{P(m, x_\star)P(x_\star, m)}. \\
\end{split}
\end{equation*}
Notice that $\widetilde{P}_\theta = \widetilde{P}_\theta \trn$,
hence the right and left PF eigenvector are identical, i.e.
$R_\theta = L_\theta$ and as is known (see \eqref{equation:parametric-stationary}), 
the stationary distribution is given by $\pi_\theta = \exp[2 R_\theta]/\sum_{x \in \mathcal{X}} \exp(2 R_\theta(x))$.
In fact, we can easily verify that the right PF 
eigenvector is given by $\exp[R_\theta] = \sqrt{\pi}$, 
and that the PF root is 
$$\exp \psi_\theta = (P(m , x_\star) P(x_\star, m))^{-1/2}.$$
Indeed, letting $x \in \mathcal{X}$, from detailed balance of $P$, we have
\begin{equation*}
\begin{split}
\sum_{x' \in \mathcal{X}} \widetilde{P}_\theta(x,x') \sqrt{\pi(x')} &= \sum_{x' \in \mathcal{X}} \sqrt{\frac{P(x , x') P(x', x)}{P(m , x_\star) P(x_\star, m)}} = \frac{\sqrt{\pi(x)}}{\sqrt{P(m , x_\star) P(x_\star, m)}}.
\end{split}
\end{equation*}
\end{proof}

\subsection{The doubly autoparallel submanifold of reversible kernels}
\label{section:doubly-autoparallel}
Recall that we can view $\mathcal{W} = \mathcal{W}(\mathcal{X}, \mathcal{E})$ as a smooth manifold of dimension $d = \dim \mathcal{W} = \abs{\mathcal{E}} - \abs{\mathcal{X}}$.
For each $P \in \mathcal{W}$,
we can then consider the tangent plane $T_P$ at $P$, endowed with a $d$-dimensional vector space structure.
Together with the manifold, we define an information geometric structure consisting of a Riemannian metric, called
the \emph{Fisher information metric} $\mathfrak{g}$,
and a pair of \emph{torsion-free affine connections} $\nabla^{(e)}$ and $\nabla^{(m)}$ respectively called \emph{e-connection} and \emph{m-connection}, that are dual with respect to $\mathfrak{g}$,
i.e. for any vector fields $X, Y, Z \in \Gamma(T\mathcal{W})$,
\begin{equation*}
\begin{split}
X \mathfrak{g}(Y, Z) = \mathfrak{g}(\nabla^{(e)}_X Y, Z) + \mathfrak{g}( Y, \nabla^{(m)}_X Z),
\end{split}
\end{equation*}
where $\Gamma(T \mathcal{W})$ is the set of all sections over the tangent bundle.
We now review an explicit construction for $\mathfrak{g}, \nabla^{(m)}, \nabla^{(e)}$.

\paragraph{Construction in the natural chart map.}

Consider a parametric family $\mathcal{V} = \set{P_\theta \colon \theta \in \Theta}$ with $\Theta$ open subset of $\R^d$.
For any $n \in \N$, we define the path measure $Q_\theta^{(n)} \in \mathcal{P}(\mathcal{X}^n)$ induced from the kernel $P_\theta$.
$$Q_\theta^{(n)}(x_1, x_2, \dots, x_n) = \pi_\theta(x_1) \prod_{t = 1}^{n - 1} P_\theta(x_t, x_{t+1}).$$
\citet{nagaoka2017exponential} defines the Fisher metric as
\begin{equation*}
\begin{split}
\mathfrak{g}_{ij}(\theta) &\eqdef \sum_{(x, x') \in \mathcal{E}} Q_\theta(x, x') \partial_i \log P_\theta(x, x') \partial_j \log P_\theta(x, x'), \\
&= \sum_{(x, x') \in \mathcal{E}} \partial_i \log P_\theta(x, x') \partial_j Q_\theta(x, x'), \\
&= \lim_{n \to \infty} \frac{1}{n} \mathfrak{g}_{ij}^{ n}(\theta),
\end{split}
\end{equation*}
and the dual affine e/m-connections of $\set{P_\theta \colon \theta \in \Theta}$ by their Christoffel symbols,
\begin{equation*}
\begin{split}
\Gamma^{(e)}_{ij, k}(\theta) &\eqdef \sum_{(x, x') \in \mathcal{E}} \partial_i \partial_j \log P_\theta(x, x') \partial_k Q_\theta(x,  x') = \lim_{n \to \infty} \frac{1}{n} \Gamma^{(e), n}_{ij, k}(\theta), \\
\Gamma^{(m)}_{ij, k}(\theta) &\eqdef \sum_{(x, x') \in \mathcal{E}} \partial_i \partial_j  Q_\theta(x, x') \partial_k \log P_\theta(x,  x') = \lim_{n \to \infty} \frac{1}{n} \Gamma^{(m), n}_{ij, k}(\theta), \\
\end{split}
\end{equation*}
where $\mathfrak{g}_{ij}^{n}(\theta), \Gamma^{(e), n}_{ij, k}(\theta), \Gamma^{(m), n}_{ij, k}(\theta)$ are the Fisher metric, and Christoffel symbols of the e/m-connections that pertain to the distribution family $\set{Q_\theta^{(n) }}_{\theta \in \Theta}$.

\paragraph{Autoparallelity.}

Connections allow us to talk about covariant derivatives and parallelity of vectors fields.

\begin{definition}
A submanifold $\mathcal{V}$  is called  autoparallel in $\mathcal{W}$ with respect to a connection $\nabla$, when for any vector fields $\forall X, Y \in \Gamma(T \mathcal{V})$, it holds that
$$\nabla_X Y \in \Gamma(T \mathcal{V}).$$
\end{definition}
A submanifold $\mathcal{V}$ of $\mathcal{W}$ is then an e-family (resp. m-family) if and only if it is autoparallel with respect to $\nabla^{(e)}$ (resp. $\nabla^{(m)}$) \citep[Theorem~6]{nagaoka2017exponential}.
As the manifold of reversible kernels is both an e-family and an m-family, it is called doubly autoparallel \citep[Definition~1]{ohara2016doubly}.

\begin{theorem}
\label{theorem:doubly-autoparallel-submanifold}
The manifold $\mathcal{W}\rev(\mathcal{X}, \mathcal{E})$ of irreducible and reversible Markov chains over $(\mathcal{X}, \mathcal{E})$ is a doubly autoparallel submanifold in $\mathcal{W}(\mathcal{X}, \mathcal{E})$ with
dimension 
\GW{$$\dim \mathcal{W}\rev(\mathcal{X}, \mathcal{E}) = \frac{\abs{\mathcal{E}} + \abs{T_0(\mathcal{E})}}{2} - 1,$$
where $T_0(\mathcal{E}) = \set{ (x,x') \in \mathcal{E} \colon x = x' }$.
}
\end{theorem}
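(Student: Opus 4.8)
The plan is to obtain the statement by combining the two flat structures already established for $\mathcal{W}\rev(\mathcal{X},\mathcal{E})$ with the dictionary, recalled above from \citet[Theorem~6]{nagaoka2017exponential}, between autoparallelity and the two families: a submanifold of $\mathcal{W}(\mathcal{X},\mathcal{E})$ is autoparallel with respect to $\nabla^{(e)}$ if and only if it is an e-family, and autoparallel with respect to $\nabla^{(m)}$ if and only if it is an m-family. It therefore suffices to verify that $\mathcal{W}\rev(\mathcal{X},\mathcal{E})$ is both, and then read off the dimension.

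First I would invoke Theorem~\ref{theorem:quotient-space-dimension-e-family}-$(ii)$ verbatim: $\mathcal{W}\rev(\mathcal{X},\mathcal{E}) = \Delta(\mathcal{G}\rev(\mathcal{X},\mathcal{E}))$ is an e-family, hence autoparallel with respect to $\nabla^{(e)}$, and $\dim \mathcal{W}\rev(\mathcal{X},\mathcal{E}) = \abs{T(\mathcal{E})}$. Second, I would argue that it is also an m-family. By \eqref{eq:equivalent-edge-measure-set} the edge measures in $\mathcal{Q}(\mathcal{X},\mathcal{E})$ are exactly the elements of $\mathcal{P}(\mathcal{E})$ cut out by the linear marginal-balance constraints, and a kernel $P$ is reversible precisely when its edge measure $Q$ is symmetric, i.e. satisfies the additional linear constraints $Q[\delta_i\trn\delta_j - \delta_j\trn\delta_i] = Q(i,j) - Q(j,i) = 0$ for all $(i,j) \in \mathcal{E}$. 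This places $\mathcal{W}\rev(\mathcal{X},\mathcal{E})$ in the form of Definition~\ref{definition:m-family-parametric}-$(iii)$, so it is an m-family, hence autoparallel with respect to $\nabla^{(m)}$. Being autoparallel with respect to both $\nabla^{(e)}$ and $\nabla^{(m)}$, it is doubly autoparallel in the sense of \citet[Definition~1]{ohara2016doubly}.

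For the dimension, the e-family count of Theorem~\ref{theorem:quotient-space-dimension-e-family} gives $\dim \mathcal{W}\rev(\mathcal{X},\mathcal{E}) = \abs{T(\mathcal{E})}$, and the last equality in \eqref{equation:T-cardinalities} rewrites this as $(\abs{\mathcal{E}} + \abs{T_0(\mathcal{E})})/2 - 1$. The one delicate point along this route is the dimension bookkeeping on the m-side: Definition~\ref{definition:m-family-parametric}-$(iii)$ does not by itself certify that the symmetry constraints are consistent with the marginal-balance ones without over-constraining, so one should check that the resulting m-family genuinely has dimension $\abs{T(\mathcal{E})}$, matching the e-side. I expect this to be the only real (and mild) obstacle. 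The cleanest way to dispatch it is to appeal instead to the explicit representation of Theorem~\ref{theorem:explicit-parametrization}, which already exhibits $Q = g_\star/2 + \sum_{(i,j)\in T(\mathcal{E})}(g_{ij}-g_\star)\,Q(i,j)/(1+\delta_i(j))$ as an affine chart in the $\abs{T(\mathcal{E})}$ parameters $\set{Q(i,j)}_{(i,j)\in T(\mathcal{E})}$; the required affine independence of the translates follows from Theorem~\ref{theorem:reversible-basis} together with the decomposition \eqref{eq:aternative-quotient-space}, so this simultaneously confirms the m-family structure and the dimension in one stroke.
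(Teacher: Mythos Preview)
Your proposal is correct and matches the paper's proof almost exactly: the paper's argument is simply ``The set of reversible Markov chains is an e-family (Theorem~\ref{theorem:quotient-space-dimension-e-family}), and an m-family (Theorem~\ref{theorem:explicit-parametrization}),'' together with the dimension formula \eqref{equation:T-cardinalities}. Your intermediate detour through Definition~\ref{definition:m-family-parametric}-$(iii)$ is not needed---and you correctly identify its weakness and then land on Theorem~\ref{theorem:explicit-parametrization} anyway, which is precisely what the paper does.
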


\begin{proof}
The set of reversible Markov chains is an e-family (Theorem~\ref{theorem:quotient-space-dimension-e-family}), and an m-family (Theorem~\ref{theorem:explicit-parametrization}).
\end{proof}

\subsection{Reversible geodesics}
\label{section:reversible-geodesics}
In this section, we let two
irreducible reversible kernels $P_0$ and $P_1$ over $(\mathcal{X}, \mathcal{E})$,
and discuss the geodesics that connect them with respect to $\nabla^{(e)}$ and $\nabla^{(m)}$.
Although already guaranteed (see for example \citet[Proposition~1]{ohara2016doubly}), we offer alternative elementary proofs that any kernel lying on either e/m-geodesic is irreducible and reversible.

\paragraph{m-geodesics.}

By irreducibility, there exist unique $Q_0, Q_1, \in \mathcal{Q}(\mathcal{X}, \mathcal{E})$ corresponding to $P_0, P_1$. Moreover, by reversibility $Q_0$ and $Q_1$ are symmetric.
We let
$$G_m(P_0, P_1) \eqdef \set{P_\xi \colon Q_\xi = \xi Q_1 + (1 - \xi) Q_0 \colon \xi \in [0, 1]},$$
be the m-geodesic (auto-parallel curve with respect to the m-connection) connecting $P_0$ and $P_1$. Then $G_m(P_0, P_1)$ forms an m-family of dimension $1$. 
For any $\xi \in [0,1]$, the matrix $Q_\xi$ is symmetric as convex combination of two symmetric matrices. $Q_\xi$ takes value $0$ exactly when $Q_0, Q_1$, i.e. $P_0, P_1$ take value $0$. 
Furthermore, writing $\pi_0$ (resp. $\pi_1$) the unique stationary distribution of $P_0$ (resp. $P_1$),
\begin{equation*}
    \begin{split}
        \sum_{x'} Q_\xi(x, x') &= \xi \pi_1(x) + (1 - \xi) \pi_0(x), \\
\sum_{x} Q_\xi(x, x') &= \xi \pi_1(x') + (1 - \xi) \pi_0(x'),
    \end{split}
\end{equation*}
thus $Q_\xi$ always defines a proper associated stochastic irreducible stochastic $P_\xi$.

\paragraph{e-geodesics.}

We consider the auto-parallel curve with respect to the e-connection that connect $P_0$ and $P_1$,
$$G_e(P_0, P_1) \eqdef \set{ P_0(x, x') \exp \left( \theta \ln \frac{P_{1}(x, x')}{P_0(x, x')} + R_\theta(x') - R_\theta(x) - \psi_\theta \right) : \theta \in [0,1] }.$$
The set $G_e(P_0, P_1)$ forms an e-family of dimension 1.
Indeed, from Theorem~\ref{theorem:characterization-reversible-e-family}, 
and since $P_0$ and $P_1$ are reversible by hypothesis, 
it suffices to verify that $(x, x') \mapsto P_1(x, x') / P_0(x, x')$  is a reversible function over $(\mathcal{X}, \mathcal{E})$. This follows from a simple application of the Kolmogorov criterion (Theorem~\ref{theorem:kolmgorov-criterion}).

\section{Reversible information projections}
\label{section:information-projections}
Reversible Markov kernels, as self-adjoint linear operators, 
enjoy a set of powerful yet brittle spectral properties.
The eigenvalues are real, 
the second largest in magnitude controls 
the time to stationarity of the Markov process \citep[Chapter~12]{levin2009markov},
and all are stable under perturbation and estimation \citep{hsu2019}.
However, any deviation from reversibility carries steep consequences, 
as the spectrum can suddenly become complex, and partially loses control over the
mixing time. Furthermore, eigenvalue perturbation results 
that were dimensionless \citep[Corollary~4.10 ~(Weyl's~inequality)]{stewart1990matrix} now come at a cost possibly 
exponential in the dimension \citep[Theorem~1.4 ~(Ostrowski-Elsner)]{stewart1990matrix}.
For some irreducible $P$ with stationary distribution $\pi$, 
it is therefore interesting to find the \emph{closest} 
representative that is reversible, 
so as to enable Hilbert space techniques.
Computing the closest reversible transition kernel with respect to a norm 
induced from an inner product was considered in \citet{nielsen2015computing},
who showed that the problem reduces to 
solving a convex minimization problem with a unique solution.

In this section, we examine this problem under a different notion of distance.
We consider information projections onto the reversible family of transition kernels 
$\mathcal{W}\rev(\mathcal{X}, \mathcal{E})$, for some symmetric edge set $\mathcal{E}$.
We define the m-projection and the e-projection of $P$ onto the set of reversible transition kernels $\mathcal{W} \rev (\mathcal{X}, \mathcal{E})$ respectively as
\begin{equation*}
\begin{split}
    P_m \eqdef \argmin_{\bar{P} \in \mathcal{W} \rev (\mathcal{X}, \mathcal{E})} \kl{P}{\bar{P}}, \qquad
    P_e \eqdef \argmin_{\bar{P} \in \mathcal{W} \rev (\mathcal{X}, \mathcal{E})} \kl{\bar{P}}{P},
\end{split}
\end{equation*}
where $\kl{\cdot}{\cdot}$ is the informational divergence, that was defined at \eqref{equation:kl-divergence}.
These two generally distinct projections ($D$ is not symmetric in its arguments) 
correspond to the closest reversible chains 
when considering information divergence as a measure of distance.
Under a careful choice of the connection graph of the reversible family,
we derive closed-form expressions for $P_m$ and $P_e$,
along with Pythagorean identities, as illustrated in Figure~\ref{figure:information-projection}.
\begin{theorem}
\label{theorem:e-m-projections}
Let $P$ be irreducible over $(\mathcal{X}, \mathcal{E})$. 

\paragraph{m-projection.}

The m-projection $P_m$ of $P$ onto $\mathcal{W} \rev (\mathcal{X}, \mathcal{E} \cup \mathcal{E}^\star)$ is given by
\begin{equation*}
\begin{split}
P_m &= \frac{P + P^\star}{2}.
\end{split}
\end{equation*}
Moreover, for any $\bar{P} \in \mathcal{W} \rev (\mathcal{X}, \mathcal{E} \cup \mathcal{E}^\star)$, 
$P_m$ satisfies the following Pythagorean identity.
\begin{equation*}
    \kl{P}{\bar{P}} = \kl{P}{P_m} + \kl{P_m}{\bar{P}}.
\end{equation*}

\paragraph{e-projection.}
When $\mathcal{E} \cap \mathcal{E}^\star$ is a strongly connected directed graph,
the e-projection $P_e$ of $P$ onto $\mathcal{W} \rev (\mathcal{X}, \mathcal{E} \cap \mathcal{E}^\star)$ is given by
\begin{equation*}
\begin{split}
P_e &= \stoch(\widetilde{P}_e), \qquad\text{ with } \widetilde{P}_e(x,x') = \sqrt{P(x,x' )P^\star(x,x')},
\end{split}
\end{equation*}
and where $\stoch$ is the stochastic rescaling mapping defined at \eqref{equation:stochastic-version}.
Moreover, for any $\bar{P} \in \mathcal{W} \rev (\mathcal{X}, \mathcal{E} \cap \mathcal{E}^\star)$, 
$P_e$ satisfies the following Pythagorean identity.
\begin{equation*}
\kl{\bar{P}}{P} = \kl{\bar{P}}{P_e} + \kl{P_e}{P}.
\end{equation*}
\end{theorem}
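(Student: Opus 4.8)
The plan is to handle the two projections by the same three-step template: first exhibit the claimed kernel as a member of the reversible family it is supposed to project onto (reversibility, irreducibility, and the correct support); then prove the displayed Pythagorean identity by expanding the three divergences and checking that the resulting ``cross term'' vanishes; and finally deduce from that identity, together with the fact that $D(\cdot\|\cdot)\ge 0$ with equality only on the diagonal, that the claimed kernel is the unique minimizer, hence the projection. I would keep the whole argument at the level of edge-measure matrices, using Lemma~\ref{lemma:q-expectation} and Corollary~\ref{corollary:easy-log-reversibility} as the main tools; the identities could alternatively be read off from the generalized Pythagorean theorem for dually flat manifolds, since $\mathcal{W}\rev(\mathcal{X},\mathcal{E})$ is doubly autoparallel (Theorem~\ref{theorem:doubly-autoparallel-submanifold}).

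For the \textbf{m-projection}, I would first note that $\pi$, the stationary distribution of $P$, is also stationary for $P^\star$, so the edge measure of $P^\star$ is $Q\trn$ and that of $P_m=(P+P^\star)/2$ is $(Q+Q\trn)/2$; this is symmetric with marginals $\pi$, so $P_m$ is reversible, irreducible over $\mathcal{E}\cup\mathcal{E}^\star$, and all three divergences in the claim are finite. Using that $P_m$ and $P$ share the stationary distribution $\pi$, the combination $\kl{P}{\bar P}-\kl{P}{P_m}-\kl{P_m}{\bar P}$ collapses to $(Q-Q_m)[\log P_m-\log\bar P]$, with $Q-Q_m=(Q-Q\trn)/2$ skew-symmetric, where $Q_m$ is the edge measure of $P_m$. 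Writing detailed balance for the reversible kernels $P_m$ and $\bar P$ shows that the antisymmetric part of $\log P_m-\log\bar P$ equals $\phi(x')-\phi(x)$ for $\phi=\log\pi-\log\bar\pi$, and pairing a skew-symmetric matrix with such a function gives zero --- this is essentially Lemma~\ref{lemma:q-expectation}$(iii)$, or just the observation that the row and column sums of $Q-Q\trn$ vanish. Non-negativity of $\kl{P_m}{\bar P}$ then yields $\kl{P}{\bar P}\ge\kl{P}{P_m}$, with equality iff $\bar P=P_m$.

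For the \textbf{e-projection}, the key preliminary is the factorization $\widetilde P_e(x,x')=\sqrt{\pi(x')/\pi(x)}\,\sqrt{P(x,x')P(x',x)}$, that is, $\widetilde P_e=\diag(\pi)^{-1/2}S\diag(\pi)^{1/2}$ with $S(x,x')=\sqrt{P(x,x')P(x',x)}$ symmetric on the (symmetric) set $\mathcal{E}\cap\mathcal{E}^\star$; hence $\log\widetilde P_e$ splits as a symmetric function plus an element of $\mathcal{N}_0(\mathcal{X},\mathcal{E}\cap\mathcal{E}^\star)$, so it is log-reversible, and since $\stoch$ only adds $\log v(x')-\log v(x)-\log\rho\in\mathcal{N}(\mathcal{X},\mathcal{E}\cap\mathcal{E}^\star)$, the kernel $P_e$ is reversible by Corollary~\ref{corollary:easy-log-reversibility} and, by the strong-connectedness hypothesis, irreducible over $\mathcal{E}\cap\mathcal{E}^\star$. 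For the identity, expanding $\kl{\bar P}{P}-\kl{\bar P}{P_e}-\kl{P_e}{P}$ reduces it to $(\bar Q-Q_e)[\log P_e-\log P]$, where $\bar Q-Q_e$ is symmetric as a difference of edge measures of reversible kernels. A short computation with $P^\star(x,x')=\pi(x')P(x',x)/\pi(x)$ gives $\log P_e(x,x')-\log P(x,x')=-\tfrac12\bigl(\log P(x,x')-\log P(x',x)\bigr)+\bigl(f(x')-f(x)\bigr)-\log\rho$ with $f=\tfrac12\log\pi+\log v$; the first two summands are antisymmetric under $x\leftrightarrow x'$, so the symmetric part of $\log P_e-\log P$ is just the constant $-\log\rho$. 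Since a symmetric matrix only sees the symmetric part of the function it is paired with, the cross term equals $-\log\rho\sum_{(x,x')}\bigl(\bar Q(x,x')-Q_e(x,x')\bigr)=-\log\rho(1-1)=0$, and one concludes exactly as in the m-projection case.

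I expect the e-projection half to be the main obstacle: one has to recognize that $\widetilde P_e$ is similar to a symmetric matrix in order to check $P_e\in\mathcal{W}\rev(\mathcal{X},\mathcal{E}\cap\mathcal{E}^\star)$ at all, and then carry out the symmetric/antisymmetric decomposition of $\log P_e-\log P$ cleanly enough to see that nothing but a harmless additive constant survives. By comparison, the edge-set bookkeeping --- checking that $\mathcal{E}\cup\mathcal{E}^\star$, respectively $\mathcal{E}\cap\mathcal{E}^\star$, is exactly the relevant common support so that every divergence that appears is finite --- is routine but should be spelled out.
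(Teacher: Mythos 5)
Your proposal is correct and follows essentially the same route as the paper's proof: verify membership of $P_m$ and $P_e$ in the reversible family via log-reversibility (Corollary~\ref{corollary:easy-log-reversibility}), reduce each Pythagorean identity to a cross term of the form $(Q-Q_m)[\log P_m-\log\bar P]$ resp. $(\bar Q-Q_e)[\log P_e-\log P]$, and kill it by symmetry/skew-symmetry considerations equivalent to Lemma~\ref{lemma:q-expectation}. The only cosmetic differences are that you phrase the cancellation through an explicit symmetric/antisymmetric decomposition (with vanishing row and column sums) rather than invoking Lemma~\ref{lemma:q-expectation}$(i)$--$(ii)$ directly, and that you spell out the final optimality step from non-negativity of the divergence, which the paper leaves implicit.
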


\begin{proof}
Our first order of business is to show that $P_m$ and $P_e$ belong 
respectively to $\mathcal{W} \rev (\mathcal{X}, \mathcal{E} \cup \mathcal{E}^\star)$ 
and $\mathcal{W} \rev (\mathcal{X}, \mathcal{E} \cap \mathcal{E}^\star)$.
It is easy to see that $P_m(x, x') > 0$ exactly 
when $(x, x')$ or $(x', x)$ belongs to $\mathcal{E}$,
hence $P_m \in \mathcal{W} \rev (\mathcal{X}, \mathcal{E} \cup \mathcal{E} ^\star)$,
and that $P_e(x, x') > 0$ whenever $(x, x')$ 
belongs to both $\mathcal{E}$ and $\mathcal{E}^\star$.
Moreover, since the time-reversal operation preserves the 
stationary distribution of an irreducible chain, $P_m$ has
the same stationary distribution $\pi_m = \pi$, and a
straightforward computation shows that $P_m$ satisfies 
the detailed balance equation. 
To prove reversibility of $P_e$, we rewrite
\GW{
\begin{equation*}
\begin{split}
\log P_e(x, x') &= \frac{1}{2}\log P(x,x')P(x',x) - \log \rho(\widetilde{P}_e) \\
&+ \log \left( \sqrt{\pi(x')} v_e(x') \right) - \log \left( \sqrt{\pi(x)} v_e(x) \right).
\end{split}
\end{equation*}
From Corollary~\ref{corollary:easy-log-reversibility}, $\log[P_e] \in \mathcal{F} \rev(\mathcal{X},\mathcal{E})$, thus $P_e \in \mathcal{W} \rev(\mathcal{X},\mathcal{E})$.
}

To prove optimality of $P_m$, it suffices to verify the following Pythagorean identity
\begin{equation*}
    \kl{P}{\bar{P}} = \kl{P}{P_m} + \kl{P_m}{\bar{P}}.
\end{equation*}
Writing $Q_m = \diag(\pi) P_m$,
notice that $P_m = (P + P^\star)/2$ is equivalent to 
$Q_m = (Q + Q^\star)/2$. We then have
\begin{equation*}
\begin{split}
    &\kl{P}{P_m} + \kl{P_m}{\bar{P}} - \kl{P}{\bar{P}} \\ 
    &= \sum_{x, x' \in \mathcal{X}} \bigg( Q(x,x') \log \frac{P(x,x')}{P_m(x, x')} + Q_m(x,x') \log \frac{P_m(x,x')}{\bar{P}(x,x')} \\
    &\qquad - Q(x,x') \log \frac{P(x,x')}{\bar{P}(x,x')} \bigg) \\
    &= \sum_{x, x' \in \mathcal{X}} \left( Q_m(x,x') - Q(x,x') \right) \log \frac{P_m(x,x')}{\bar{P}(x,x')}  \\
    &= \sum_{x, x' \in \mathcal{X}} \left(\frac{ Q^\star(x,x') - Q(x,x')}{2}\right) \log \frac{P_m(x,x')}{\bar{P}(x,x')} \\
		&= \frac{1}{2} Q^\star \left[ \log (P_m / \bar{P}) \right] - \frac{1}{2} Q\left[ \log (P_m / \bar{P}) \right] = 0, \\
\end{split}
\end{equation*}
where the last equality stems from $(i)$ of Lemma~\ref{lemma:q-expectation} 
and reversibility of $P_m$ and $\bar{P}$.
Similarly, to prove optimality of $P_e$, it suffices to verify that
\begin{equation*}
\kl{\bar{P}}{P} = \kl{\bar{P}}{P_e} + \kl{P_e}{P}.
\end{equation*}
By reorganizing terms
\begin{equation*}
\begin{split}
    \kl{\bar{P}}{P_e} + \kl{P_e}{P} - \kl{\bar{P}}{P}
		= \bar{Q}\left[\log (P / P_e) \right] - Q_e\left[ \log (P / P_e) \right].  \\
\end{split}
\end{equation*}
From the definition of $P_e(x,x')$,
\begin{equation*}
\begin{split}
		\log \frac{P(x,x')}{P_e(x,x')} = \frac{1}{2}\log \frac{P(x,x')}{P(x',x)} + \frac{1}{2}\log \frac{\pi(x)}{\pi(x')} + \log \frac{v_e(x)}{v_e(x')} + \log \rho(\widetilde{P}_e). \\
\end{split}
\end{equation*}
The first three terms being skew-symmetric, reversibility of $\bar{P}$ and $(ii)$ 
of Lemma~\ref{lemma:q-expectation} yield that
\begin{equation*}
\begin{split}
		\bar{Q} \left[ \log (P / P_e) \right] = \log \rho(\widetilde{P}_e). \\
\end{split}
\end{equation*}
By a similar argument, $Q_e \left[ \log ( P /P_e) \right] = \log \rho(\widetilde{P}_e)$,
which concludes the proof.
\end{proof}

In other words, the m-projection is given by the natural \emph{additive reversiblization} \citep[(2.4)]{fill1991eigenvalue} of $P$, while the e-projection is achieved by some newly defined \emph{exponential reversiblization} of $P$.

The difference between the m-projection and the e-projection is illustrated in the following example.
\begin{example}
Let us consider the family of biased lazy random walks \GW{$P_\theta = P_{(\theta_1, \theta_2)}$}, given in Example \ref{example:lazy-random-walk}.
Note that $\mathcal{E}=\mathcal{E}^\star$.
The m-projection $P_m$ of $P_\theta$ onto $\mathcal{W} \rev (\mathcal{X}, \mathcal{E})$ is 
the unbiased lazy random walk given by \GW{$P_m = P_{(\theta', 0)}$ with $\theta' = \theta_1 - \log \cosh \theta_2$, i.e. }
\begin{align*}
P_m(x,x) = \frac{e^{\theta_1}}{e^{\theta_1}+e^{\theta_2}+e^{-\theta_2}},
\end{align*}
\begin{align*}
P_m(x,x+1) = P_m(x+1,x)= \frac{e^{\theta_2}+e^{-\theta_2}}{2(e^{\theta_1}+e^{\theta_2}+e^{-\theta_2})}.
\end{align*}
On the other hand, the e-projection $P_e$ of $P_\theta$ onto $\mathcal{W} \rev (\mathcal{X}, \mathcal{E})$ is 
the unbiased lazy random walk given by \GW{$P_e = P_{(\theta_1, 0)}$, i.e.}
\begin{align*}
P_e(x,x) = \frac{e^{\theta_1}}{e^{\theta_1}+2},~~~
P_e(x,x+1) = P_e(x+1,x)= \frac{1}{e^{\theta_1}+2}.
\end{align*}
\end{example}

\begin{remark}
We observe that, although the m-projection preserves the stationary distribution,
this is not true for $P_e$, which
exhibits a stationary distribution $\pi_e$ generally different from $\pi$. 
Furthermore, while the solution for the m-projection is always 
properly defined by taking union of the edge sets, our expression 
for the e-projection requires additional constraints on the
connection graph of $P$. Indeed, taking the intersection $\mathcal{E} \cap \mathcal{E}^\star$, 
we always obtain a symmetric set, but can lose strong connectedness. 
We note but do not pursue the fact that reversibility 
can be  defined for the less well-behaved set of reducible chains.
In this case, $\pi$ need not be unique, or could take null values, and the kernel could have a complex spectrum.
\end{remark}

Finally, we show that for 
any irreducible $P$,
both its reversible projections $P_m$ and $P_e$ are equidistant from $P$ 
 and its time-reversal $P^\star$ (see also Figure~\ref{figure:information-projection}). 

\begin{proposition}[Bisection property]
\label{proposition:divergence-symmetry}
Let $P$ irreducible, and let $P_m$ (resp. $P_e$) the m-projection
(resp. e-projection) of $P$ onto $\mathcal{W}\rev(\mathcal{X}, \mathcal{E})$.
\begin{equation*}
\begin{split}
    \kl{P}{P_m} = \kl{P^\star}{P_m}, \qquad \kl{P_e}{P} = \kl{P_e}{P^\star}. \\
\end{split}
\end{equation*}

\end{proposition}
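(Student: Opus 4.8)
The plan is to exploit the symmetry between $P$ and $P^\star$ together with the closed-form expressions for $P_m$ and $P_e$ from Theorem~\ref{theorem:e-m-projections}, and to reduce each equidistance claim to one of the symmetry lemmas in Lemma~\ref{lemma:q-expectation}. The key structural observation is that both $P_m = (P+P^\star)/2$ and $\widetilde{P}_e(x,x') = \sqrt{P(x,x')P^\star(x,x')}$ are manifestly invariant under swapping $P \leftrightarrow P^\star$ (note $(P^\star)^\star = P$), so $P_m$ and $P_e$ are the same whether we project $P$ or $P^\star$. Hence the entire content is in comparing $\kl{P}{P_m}$ with $\kl{P^\star}{P_m}$ (and the dual pair), which differ only in the ``source'' chain.

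For the m-projection identity, I would compute the difference directly:
\begin{equation*}
\kl{P}{P_m} - \kl{P^\star}{P_m} = \sum_{x,x'} \big( Q(x,x') - Q^\star(x,x') \big) \log \frac{P_m(x,x')}{\text{?}},
\end{equation*}
but care is needed because the two divergences have different edge-measure weights \emph{and} different numerators inside the logarithm. The cleaner route: write $\kl{P}{P_m} = Q[\log P] - Q[\log P_m]$ and $\kl{P^\star}{P_m} = Q^\star[\log P^\star] - Q^\star[\log P_m]$. Now $Q^\star[\log P^\star] = Q[\log P]$ follows by re-indexing since $Q^\star(x,x') = Q(x',x)$ and $\log P^\star(x,x') = \log P(x',x) + \log\pi(x') - \log\pi(x)$, with the shift terms contributing zero by Lemma~\ref{lemma:q-expectation}-$(iii)$ (applied to $Q$ and to $Q^\star$, which is also an edge measure). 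For the remaining pieces, $Q^\star[\log P_m] = Q[\log P_m]$ by Lemma~\ref{lemma:q-expectation}-$(i)$, since $P_m$ is reversible, hence $\log P_m$ is log-reversible. Subtracting gives zero.

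For the e-projection identity, the roles of the arguments swap: $\kl{P_e}{P} = Q_e[\log P_e] - Q_e[\log P]$ and $\kl{P_e}{P^\star} = Q_e[\log P_e] - Q_e[\log P^\star]$, so it suffices to show $Q_e[\log P] = Q_e[\log P^\star]$. Using $\log P^\star(x,x') = \log P(x',x) + \log\pi(x') - \log\pi(x)$, the shift part integrates to zero against the edge measure $Q_e$ by Lemma~\ref{lemma:q-expectation}-$(iii)$; the remaining part is $Q_e[\log P] - Q_e[(\log P)\trn]$, which vanishes because $Q_e$ is symmetric ($P_e$ is reversible) — this is essentially the re-indexing argument in the proof of Lemma~\ref{lemma:q-expectation}-$(ii)$, or one can note $(\log P) - (\log P)\trn$ is skew-symmetric and invoke Lemma~\ref{lemma:q-expectation}-$(ii)$ directly with $P_e$ reversible.

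The main obstacle is bookkeeping rather than conceptual: one must be careful that $\log P$ and $\log P^\star$ are \emph{not} log-reversible in general (only $P$ reversible would give that), so the symmetrization must be pushed onto $P_m$, $P_e$, or $Q_e$ — the reversible objects — rather than onto the source chain. Getting the ``which edge measure weights which logarithm'' matching right, and correctly peeling off the $\log\pi(x') - \log\pi(x)$ gradient terms via part $(iii)$ of the lemma, is the only place where a sign slip could occur. No new machinery beyond Lemma~\ref{lemma:q-expectation} and the formulas of Theorem~\ref{theorem:e-m-projections} is needed.
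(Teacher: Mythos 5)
Your proof is correct, and every step checks out: the decompositions $\kl{P}{P_m}=Q[\log P]-Q[\log P_m]$, $\kl{P^\star}{P_m}=Q^\star[\log P^\star]-Q^\star[\log P_m]$, the identity $Q^\star[\log P^\star]=Q[\log P]$ via re-indexing plus Lemma~\ref{lemma:q-expectation}-$(iii)$, the matching $Q^\star[\log P_m]=Q[\log P_m]$ from Lemma~\ref{lemma:q-expectation}-$(i)$, and the e-side reduction to $Q_e[\log P]=Q_e[\log P^\star]$ via Lemma~\ref{lemma:q-expectation}-$(ii)$/$(iii)$ are all sound, and you correctly keep the symmetrization on the reversible objects rather than on $P$. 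The paper's own proof takes a shorter, more abstract route: it isolates the single invariance $\mathcal{E}_1 \subset \mathcal{E}_2 \implies \kl{P_1}{P_2}=\kl{P_1^\star}{P_2^\star}$ and then concludes immediately because the projections are fixed points of time reversal ($P_m^\star=P_m$, $P_e^\star=P_e$); your computation is essentially an inlined proof of that invariance in the two instances needed, with Lemma~\ref{lemma:q-expectation} doing the bookkeeping that the paper leaves implicit in ``it is easy to see.'' Two minor observations: the closed-form expressions of Theorem~\ref{theorem:e-m-projections} and your opening remark that $P_m$, $P_e$ are unchanged under $P \leftrightarrow P^\star$ are never actually used --- only reversibility of the projections, $\pi_{P^\star}=\pi$, $Q^\star=Q\trn$, and the support inclusions $\mathcal{E}^\star \subset \mathcal{E}\cup\mathcal{E}^\star$, $\mathcal{E}\cap\mathcal{E}^\star \subset \mathcal{E}^\star$ matter; and when invoking Lemma~\ref{lemma:q-expectation}-$(i)$ for $\log P_m$ one should note the detailed-balance identity is only needed on $\mathcal{E}\cup\mathcal{E}^\star$, where both $Q$ and $Q^\star$ are supported, which is exactly how the paper itself uses the lemma in Theorem~\ref{theorem:e-m-projections}. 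The paper's version buys brevity and a reusable invariance statement; yours makes the cancellation mechanism explicit at the level of edge-measure expectations.
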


\begin{proof}
\GW{For $P_1$ irreducible over $(\mathcal{X}, \mathcal{E}_1)$ and $P_2$ irreducible over $(\mathcal{X}, \mathcal{E}_2)$, it is easy to see that
$$\mathcal{E}_1 \text{ \GW{$\subset$} } \mathcal{E}_2 \implies \kl{P_1}{P_2} = \kl{P_1^\star}{P_2^\star}.$$
Then take $P_2 = P_m$ for the first equality, and $P_1 = P_e$ for the second.
}
\end{proof}

\tikzset{every picture/.style={line width=0.75pt}} %

\begin{figure}
\centering

\begin{tikzpicture}[x=0.75pt,y=0.75pt,yscale=-1,xscale=1]

\draw   (192,136) .. controls (214.4,138.1) and (277.2,168.3) .. (413.8,88.2) .. controls (550.4,8.1) and (578.4,168.1) .. (522.4,224.1) .. controls (466.4,280.1) and (207.4,281.1) .. (172.4,259.1) .. controls (137.4,237.1) and (169.6,133.9) .. (192,136) -- cycle ;
\draw [color={rgb, 255:red, 208; green, 2; blue, 27 }  ,draw opacity=1 ]   (332.8,10.2) .. controls (308.52,26.02) and (296.52,88.47) .. (305.66,157.71) ;
\draw [shift={(305.8,158.75)}, rotate = 262.31] [color={rgb, 255:red, 208; green, 2; blue, 27 }  ,draw opacity=1 ][line width=0.75]    (10.93,-3.29) .. controls (6.95,-1.4) and (3.31,-0.3) .. (0,0) .. controls (3.31,0.3) and (6.95,1.4) .. (10.93,3.29)   ;
\draw [color={rgb, 255:red, 208; green, 2; blue, 27 }  ,draw opacity=1 ]   (386.8,142.2) .. controls (382.42,120.21) and (389.33,57.73) .. (340,10.9) ;
\draw [shift={(339.25,10.2)}, rotate = 403.08000000000004] [color={rgb, 255:red, 208; green, 2; blue, 27 }  ,draw opacity=1 ][line width=0.75]    (10.93,-3.29) .. controls (6.95,-1.4) and (3.31,-0.3) .. (0,0) .. controls (3.31,0.3) and (6.95,1.4) .. (10.93,3.29)   ;
\draw  [fill={rgb, 255:red, 255; green, 255; blue, 255 }  ,fill opacity=1 ] (326.35,10.2) .. controls (326.35,6.64) and (329.24,3.75) .. (332.8,3.75) .. controls (336.36,3.75) and (339.25,6.64) .. (339.25,10.2) .. controls (339.25,13.76) and (336.36,16.65) .. (332.8,16.65) .. controls (329.24,16.65) and (326.35,13.76) .. (326.35,10.2) -- cycle ;
\draw  [color={rgb, 255:red, 208; green, 2; blue, 27 }  ,draw opacity=1 ][fill={rgb, 255:red, 255; green, 255; blue, 255 }  ,fill opacity=1 ] (380.35,142.2) .. controls (380.35,138.64) and (383.24,135.75) .. (386.8,135.75) .. controls (390.36,135.75) and (393.25,138.64) .. (393.25,142.2) .. controls (393.25,145.76) and (390.36,148.65) .. (386.8,148.65) .. controls (383.24,148.65) and (380.35,145.76) .. (380.35,142.2) -- cycle ;
\draw [dotted, color={rgb, 255:red, 208; green, 2; blue, 27 }  ,draw opacity=1 ]   (354.4,242.1) .. controls (338.56,231.21) and (319.78,221.3) .. (306.21,173.12) ;
\draw [shift={(305.8,171.65)}, rotate = 434.62] [color={rgb, 255:red, 208; green, 2; blue, 27 }  ,draw opacity=1 ][line width=0.75]    (10.93,-3.29) .. controls (6.95,-1.4) and (3.31,-0.3) .. (0,0) .. controls (3.31,0.3) and (6.95,1.4) .. (10.93,3.29)   ;
\draw [dotted, color={rgb, 255:red, 208; green, 2; blue, 27 }  ,draw opacity=1 ]   (386.8,148.65) .. controls (388.37,179.47) and (377.83,212.74) .. (367.06,240.41) ;
\draw [shift={(366.4,242.1)}, rotate = 291.45] [color={rgb, 255:red, 208; green, 2; blue, 27 }  ,draw opacity=1 ][line width=0.75]    (10.93,-3.29) .. controls (6.95,-1.4) and (3.31,-0.3) .. (0,0) .. controls (3.31,0.3) and (6.95,1.4) .. (10.93,3.29)   ;
\draw  [color={rgb, 255:red, 208; green, 2; blue, 27 }  ,draw opacity=1 ][fill={rgb, 255:red, 255; green, 255; blue, 255 }  ,fill opacity=1 ] (299.35,165.2) .. controls (299.35,161.64) and (302.24,158.75) .. (305.8,158.75) .. controls (309.36,158.75) and (312.25,161.64) .. (312.25,165.2) .. controls (312.25,168.76) and (309.36,171.65) .. (305.8,171.65) .. controls (302.24,171.65) and (299.35,168.76) .. (299.35,165.2) -- cycle ;
\draw  [fill={rgb, 255:red, 255; green, 255; blue, 255 }  ,fill opacity=1 ] (353.95,247.1) .. controls (353.95,243.54) and (356.84,240.65) .. (360.4,240.65) .. controls (363.96,240.65) and (366.85,243.54) .. (366.85,247.1) .. controls (366.85,250.66) and (363.96,253.55) .. (360.4,253.55) .. controls (356.84,253.55) and (353.95,250.66) .. (353.95,247.1) -- cycle ;
\draw    (495.4,131.1) .. controls (495.4,89.31) and (415.21,16.83) .. (340.38,10.29) ;
\draw [shift={(339.25,10.2)}, rotate = 364.49] [color={rgb, 255:red, 0; green, 0; blue, 0 }  ][line width=0.75]    (10.93,-3.29) .. controls (6.95,-1.4) and (3.31,-0.3) .. (0,0) .. controls (3.31,0.3) and (6.95,1.4) .. (10.93,3.29)   ;
\draw  [fill={rgb, 255:red, 255; green, 255; blue, 255 }  ,fill opacity=1 ] (488.95,131.1) .. controls (488.95,127.54) and (491.84,124.65) .. (495.4,124.65) .. controls (498.96,124.65) and (501.85,127.54) .. (501.85,131.1) .. controls (501.85,134.66) and (498.96,137.55) .. (495.4,137.55) .. controls (491.84,137.55) and (488.95,134.66) .. (488.95,131.1) -- cycle ;
\draw    (326.8,13.2) .. controls (292,40.05) and (200.32,80.69) .. (209.26,174.68) ;
\draw [shift={(209.4,176.1)}, rotate = 263.99] [color={rgb, 255:red, 0; green, 0; blue, 0 }  ][line width=0.75]    (10.93,-3.29) .. controls (6.95,-1.4) and (3.31,-0.3) .. (0,0) .. controls (3.31,0.3) and (6.95,1.4) .. (10.93,3.29)   ;
\draw  [fill={rgb, 255:red, 255; green, 255; blue, 255 }  ,fill opacity=1 ] (202.95,182.55) .. controls (202.95,178.99) and (205.84,176.1) .. (209.4,176.1) .. controls (212.96,176.1) and (215.85,178.99) .. (215.85,182.55) .. controls (215.85,186.11) and (212.96,189) .. (209.4,189) .. controls (205.84,189) and (202.95,186.11) .. (202.95,182.55) -- cycle ;
\draw    (299.35,165.2) -- (217.81,182.14) ;
\draw [shift={(215.85,182.55)}, rotate = 348.26] [color={rgb, 255:red, 0; green, 0; blue, 0 }  ][line width=0.75]    (10.93,-3.29) .. controls (6.95,-1.4) and (3.31,-0.3) .. (0,0) .. controls (3.31,0.3) and (6.95,1.4) .. (10.93,3.29)   ;
\draw    (488.95,131.1) -- (395.24,141.97) ;
\draw [shift={(393.25,142.2)}, rotate = 353.38] [color={rgb, 255:red, 0; green, 0; blue, 0 }  ][line width=0.75]    (10.93,-3.29) .. controls (6.95,-1.4) and (3.31,-0.3) .. (0,0) .. controls (3.31,0.3) and (6.95,1.4) .. (10.93,3.29)   ;

\draw  [color={rgb, 255:red, 65; green, 117; blue, 5 }  ,draw opacity=1 ] (384.62,118.04) -- (406.98,115.7) -- (409.59,140.63) ;
\draw [color={rgb, 255:red, 208; green, 2; blue, 27 }  ,draw opacity=1 ]   (293.98,103.93) -- (311.18,104.02) ;
\draw [color={rgb, 255:red, 208; green, 2; blue, 27 }  ,draw opacity=1 ]   (294.22,98.28) -- (311.42,98.37) ;
\draw [color={rgb, 255:red, 208; green, 2; blue, 27 }  ,draw opacity=1 ]   (333.78,213.96) -- (321.73,226.23) ;
\draw [color={rgb, 255:red, 208; green, 2; blue, 27 }  ,draw opacity=1 ]   (337.67,218.07) -- (325.62,230.34) ;
\draw [color={rgb, 255:red, 208; green, 2; blue, 27 }  ,draw opacity=1 ]   (362.38,57.15) -- (379.37,54.53) ;
\draw [color={rgb, 255:red, 208; green, 2; blue, 27 }  ,draw opacity=1 ]   (376.07,176.68) -- (392.7,181.03) ;
\draw  [color={rgb, 255:red, 65; green, 117; blue, 5 }  ,draw opacity=1 ] (280.47,168.53) -- (278.01,146.19) -- (304.01,143.32) ;

\draw (314,152.4) node [anchor=north west][inner sep=0.75pt]  [color={rgb, 255:red, 208; green, 2; blue, 27 }  ,opacity=1 ]  {$P_{e}$};
\draw (361.8,139.6) node [anchor=north west][inner sep=0.75pt]  [color={rgb, 255:red, 208; green, 2; blue, 27 }  ,opacity=1 ]  {$P_{m}$};
\draw (307,-0.6) node [anchor=north west][inner sep=0.75pt]    {$P$};
\draw (371,238.4) node [anchor=north west][inner sep=0.75pt]    {$P^{\star }$};
\draw (506,119.4) node [anchor=north west][inner sep=0.75pt]    {$\bar{P}$};
\draw (208,242.4) node [anchor=north west][inner sep=0.75pt]    {$\mathcal{W} \rev (\mathcal{X}, \mathcal{X}^2)$};
\draw (185,172.4) node [anchor=north west][inner sep=0.75pt]    {$\bar{P}$};
\draw (185,53.4) node [anchor=north west][inner sep=0.75pt]    {$\kl{\bar{P}}{P}$};
\draw (430,23.4) node [anchor=north west][inner sep=0.75pt]    {$\kl{P}{\bar{P}}$};
\draw (420,138.4) node [anchor=north west][inner sep=0.75pt]    {$\kl{P_m}{\bar{P}}$};
\draw (238,177.4) node [anchor=north west][inner sep=0.75pt]    {$\kl{\bar{P}}{P_e}$};
\draw (305,74.4) node [anchor=north west][inner sep=0.75pt]  [color={rgb, 255:red, 208; green, 2; blue, 27 }  ,opacity=1 ]  {$\kl{P_e}{P}$};
\draw (377,57.4) node [anchor=north west][inner sep=0.75pt]  [color={rgb, 255:red, 208; green, 2; blue, 27 }  ,opacity=1 ]  {$\kl{P}{P_m}$};
\draw (383,193.4) node [anchor=north west][inner sep=0.75pt]  [color={rgb, 255:red, 208; green, 2; blue, 27 }  ,opacity=1 ]  {$\kl{P^\star}{P_m}$};
\draw (250,205.4) node [anchor=north west][inner sep=0.75pt]  [color={rgb, 255:red, 208; green, 2; blue, 27 }  ,opacity=1 ]  {$\kl{P_e}{P^\star}$};

\end{tikzpicture}

\caption{Information projections $P_e$ and $P_m$ of $P$ onto $\mathcal{W} \rev (\mathcal{X}, \mathcal{X}^2)$ in the full support case ($\mathcal{E} = \mathcal{X}^2$) \GW{(Theorem~\ref{theorem:e-m-projections})}, Pythagorean identities (Theorem~\ref{theorem:e-m-projections}), and \GW{the} bisection property (Proposition~\ref{proposition:divergence-symmetry}).} \label{figure:information-projection}
\end{figure}
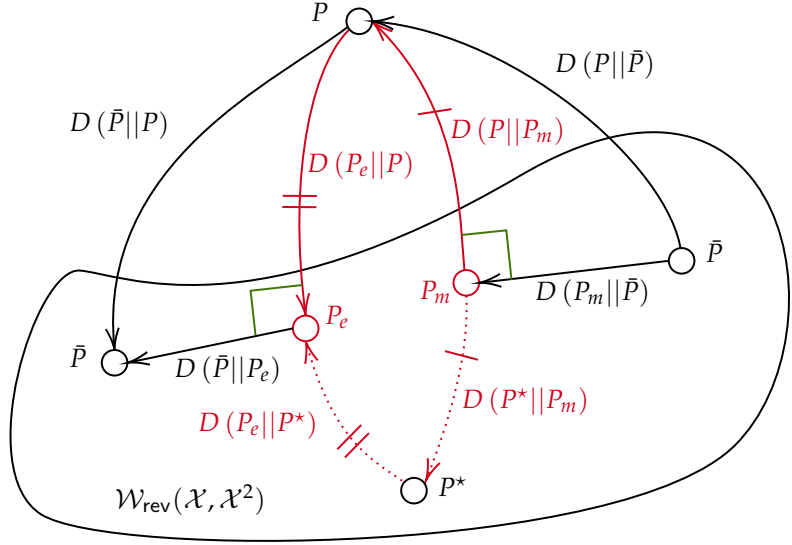

\section{The e-family of reversible edge measures}
\label{section:edge-measure-family}
Recall that $\mathcal{P}(\mathcal{X}^2)$, the set of all distributions over $\mathcal{X}^2$, forms an e-family \citep[Example~2.8]{amari2007methods}.
For some e-family of irreducible transition kernels $\mathcal{V}_e \text{ \GW{$\subset$} } \mathcal{W}(\mathcal{X}, \mathcal{X}^2)$,
one may wonder whether the corresponding family of edge measures also forms an e-family \GWS{of} distributions in $\mathcal{P}(\mathcal{X}^2)$.
We begin by illustrating that this holds in particular
for the e-family obtained by tilting a memoryless Markov kernel.

\begin{example}
\label{example:degenerate-iid}
Consider the degenerate Markov kernel corresponding to an iid process
$P(x, x') = \pi(x')$ for $\pi \in \mathcal{P}(\mathcal{X})$.
For a given function $g \colon \mathcal{X} \to \R$,
and $\theta \in \R$, construct $\widetilde{P}_\theta(x, x') = P(x, x') e^{\theta g(x')} = \pi(x') e^{\theta g(x')}$. 
Then $v_\theta = \unit$ is right eigenvector of $\widetilde{P}_\theta$ with eigenvalue $\rho(\theta) = \sum_{x' \in \mathcal{X}} \pi(x') e^{\theta g(x')}$. Letting
$\pi_\theta(x) = \pi(x) e^{\theta g(x)}/\rho(\theta)$, we see that $\pi_\theta$ is the left PF eigenvector of $\widetilde{P}_\theta$,
and the stationary distribution of the rescaled $P_\theta$. We can therefore write,
\begin{equation*}
    Q_\theta(x,x') = \exp \left(\log \pi(x)\pi(x') + \theta(g(x) + g(x')) - 2 \log \rho(\theta) \right),
\end{equation*}
thus $\set{Q_\theta}_{\theta \in \Theta}$ forms an exponential family of distributions over $\mathcal{X}^2$.
\GW{This fact can be further understood in the following manner. 
An e-family of distributions $\set{\pi_\theta}_{\theta}$ induces an e-family of memoryless Markov kernels $\set{P_\theta}_\theta$ 
with $P_\theta(x,x') = \pi_\theta(x')$ (see Lemma~\ref{lemma:iid-e-family} for a proof of this fact for the set of all memoryless kernels), and thus with edge measures $Q_\theta(x,x') = \pi_\theta(x) \pi_\theta(x')$.
Since the 2-iid extension $\set{\pi_\theta(x)\pi_\theta(x')}_\theta$ of the e-family $\set{\pi_\theta}_\theta$ is also an e-family, it follows that
$\set{Q_\theta(x,x')}_\theta$ forms an e-family.
}
\end{example}

In the remainder of this section, we show that the subset of positive reversible edge measures $\mathcal{Q} \rev = \mathcal{Q} \rev (\mathcal{X}, \mathcal{X}^2)$, induced from the e-family of reversible positive kernels, forms a submanifold of $\mathcal{P}(\mathcal{X}^2)$ that is autoparallel with respect to the e-connection, i.e.
$\mathcal{Q}\rev$ is an e-family of distribution of over pairs.
Our proof will rely on the definition of a \emph{Markov map}.

\begin{definition}[e.g. \citet{nagaoka2017information}]
\label{definition:markov-map}
We say that $M \colon \mathcal{P}(\mathcal{X}) \to  \mathcal{P}(\mathcal{Y})$ is a Markov map, when there exists a transition kernel $P_M$ from $\mathcal{X}$ to $\mathcal{Y}$ (also called a channel) such that for any $\mu \in \mathcal{P}(\mathcal{X})$, 
$$M(\mu) = \sum_{x \in \mathcal{X}} P_M(x, \cdot) \mu(x).$$
Let $\mathcal{U}$ and $\mathcal{V}$ be smooth submanifolds (statistical models) of $\mathcal{P}(\mathcal{X})$ and $  \mathcal{P}(\mathcal{Y})$ respectively.
When there exists a pair of Markov maps $M \colon \mathcal{P}(\mathcal{X}) \to  \mathcal{P}(\mathcal{Y})$, 
$N \colon \mathcal{P}(\mathcal{Y}) \to  \mathcal{P}(\mathcal{X})$ 
such that their restrictions $M|_\mathcal{U}$, $N|_\mathcal{V}$ are bijections between $\mathcal{U}$ and $\mathcal{V}$,
and are the inverse mappings of each other, we say that $\mathcal{U}$ and $\mathcal{V}$ are Markov equivalent, and write $\mathcal{U} \cong \mathcal{V}$.
\end{definition}

\begin{lemma}
\label{lemma:qrev-markov-map}
It holds that
$$\mathcal{Q} \rev (\mathcal{X}, \mathcal{X}^2) \cong \mathcal{P}\left( \left[ \frac{ \abs{\mathcal{X}}\left(  \abs{\mathcal{X}} + 1 \right)}{2} \right] \right).$$
\end{lemma}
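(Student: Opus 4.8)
The plan is to realize the Markov equivalence through an explicit pair of ``folding/unfolding'' channels, reducing everything to elementary bookkeeping. First I would identify $\mathcal{Q}\rev(\mathcal{X},\mathcal{X}^2)$ concretely: by the reversibility criterion together with \eqref{eq:equivalent-edge-measure-set}, a positive edge measure $Q \in \mathcal{Q}(\mathcal{X},\mathcal{X}^2)$ is reversible if and only if it is a symmetric matrix (once $Q^\star = Q$, the marginal-symmetry condition in \eqref{eq:equivalent-edge-measure-set} is automatic). Hence, writing $m = \abs{\mathcal{X}}$, the set $\mathcal{Q}\rev(\mathcal{X},\mathcal{X}^2)$ is precisely the set of entry-wise positive symmetric $m\times m$ matrices whose entries sum to $1$; it is an open subset of an affine hyperplane of dimension $m(m+1)/2 - 1$ (equivalently, the diffeomorphic image of the e-family $\mathcal{W}\rev(\mathcal{X},\mathcal{X}^2)$ under $P \mapsto \diag(\pi)P$), so it is a smooth submanifold of $\mathcal{P}(\mathcal{X}^2)$ in the sense of Definition~\ref{definition:markov-map}.

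Next I would set $N \eqdef m(m+1)/2$ and identify $[N]$ with the set $S \eqdef \set{(i,j) \in \mathcal{X}^2 \colon i \leq j}$ of sorted pairs, and introduce two channels. The \emph{folding} channel $P_M$ from $\mathcal{X}^2$ to $S$ sends $(x,x')$ deterministically to $(\min\set{x,x'},\max\set{x,x'})$; by Definition~\ref{definition:markov-map} it induces a Markov map $M \colon \mathcal{P}(\mathcal{X}^2) \to \mathcal{P}(S)$, explicitly $M(\mu)(i,i) = \mu(i,i)$ and $M(\mu)(i,j) = \mu(i,j) + \mu(j,i)$ for $i<j$. The \emph{unfolding} channel $P_N$ from $S$ to $\mathcal{X}^2$ sends $(i,i)$ to $(i,i)$ and, for $i<j$, sends $(i,j)$ to $\tfrac12\delta_{(i,j)} + \tfrac12\delta_{(j,i)}$; each row being a probability vector, this yields a Markov map $N \colon \mathcal{P}(S) \to \mathcal{P}(\mathcal{X}^2)$ with $N(\nu)(i,i) = \nu(i,i)$ and $N(\nu)(i,j) = N(\nu)(j,i) = \nu(i,j)/2$ for $i<j$.

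Then I would verify the compositions. On one hand, $N$ carries $\mathcal{P}_+(S)$ (positive distributions on $S$) into $\mathcal{Q}\rev(\mathcal{X},\mathcal{X}^2)$, since $N(\nu)$ is symmetric, positive, and has total mass $\sum_i \nu(i,i) + \sum_{i<j} 2\cdot\nu(i,j)/2 = \sum_{(i,j)\in S}\nu(i,j) = 1$; conversely $M$ carries $\mathcal{Q}\rev(\mathcal{X},\mathcal{X}^2)$ into $\mathcal{P}_+(S)$. Composing: for symmetric $Q$ one has $M(Q)(i,j) = 2Q(i,j)$ for $i<j$ and $M(Q)(i,i) = Q(i,i)$, whence $N(M(Q)) = Q$; and $M(N(\nu)) = \nu$ holds for every $\nu \in \mathcal{P}(S)$ by direct substitution. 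Therefore $M|_{\mathcal{Q}\rev(\mathcal{X},\mathcal{X}^2)}$ and $N|_{\mathcal{P}_+(S)}$ are bijections that are mutual inverses, which is exactly the claimed Markov equivalence $\mathcal{Q}\rev(\mathcal{X},\mathcal{X}^2) \cong \mathcal{P}([N])$ (understood, as is standard for statistical models, at the level of the positive parts, which is the locus corresponding to genuine positive edge measures).

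I do not expect any serious obstacle: the argument is a dimension count made effective by two transparent channels. The only points requiring a little care are (i) recognizing that, on the complete graph, reversibility of an edge measure coincides with symmetry, so that $\mathcal{Q}\rev(\mathcal{X},\mathcal{X}^2)$ really is the $\bigl(m(m+1)/2 - 1\bigr)$-dimensional model it ought to be; (ii) that the unfolding channel must split mass \emph{equally} between $(i,j)$ and $(j,i)$, since this is precisely what forces its image to be symmetric and what makes $M\circ N$ the identity; and (iii) the benign mismatch between the open simplex $\mathcal{P}_+([N])$ and the closed $\mathcal{P}([N])$ in the statement, which is immaterial for the Markov-equivalence of the underlying (open) manifolds.
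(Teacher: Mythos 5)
Your proposal is correct and is essentially the paper's own argument: the paper's block-stochastic matrices $\begin{psmallmatrix} I_m & 0 \\ 0 & E \end{psmallmatrix}$ and $\begin{psmallmatrix} I_m & 0 \\ 0 & \frac{1}{2}E\trn \end{psmallmatrix}$ are exactly your deterministic ``sort'' (folding) channel and your equal-mass-splitting (unfolding) channel, with the same verification that the compositions are the identity on the respective models. Your explicit check of both compositions and the remark that reversibility of a positive edge measure on the complete graph is just symmetry match what the paper does implicitly, so there is nothing to add.
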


\begin{proof}
Identify $\mathcal{X} = [m]$, and consider $Q \in \mathcal{Q}\rev$ such that
\begin{equation*}
    \begin{split}
    Q = \begin{pmatrix}
        \eta_{11} & \frac{\eta_{12}}{2} & \frac{\eta_{13}}{2} & \dots &  \frac{\eta_{1m}}{2} \\
        \frac{\eta_{12}}{2} & \eta_{22} & \frac{\eta_{23}}{2} & \dots &  \frac{\eta_{2m}}{2} 
        \\
        \vdots & & \ddots &  &  \vdots \\
        \vdots & & & \eta_{(m-1)(m-1)} &  \frac{\eta_{(m-1)m}}{2} \\
        \frac{\eta_{1m}}{2} & \dots & \frac{\eta_{23}}{2} & \dots & \eta_{mm}, \\
    \end{pmatrix}
    \end{split}
\end{equation*}
and where $\eta_{mm} = 1 - \sum_{i \leq j, (i,j) \neq (m, m)} \eta_{ij}$.
We flatten the definition of $Q$.
$$Q = \left(\eta_{11}, \eta_{22}, \dots, \eta_{mm}, \frac{\eta_{12}}{2}, \frac{\eta_{12}}{2}, \dots, \underbrace{\frac{\eta_{ij}}{2}, \frac{\eta_{ij}}{2},}_{(i,j) \colon i < j} \dots, \frac{\eta_{(m-1)m}}{2}, \frac{\eta_{(m-1)m}}{2} \right).$$
Let the matrix $E$ with $m(m-1)/2$ columns and $m(m-1)$ rows be such that,
\begin{equation*}
    \begin{split}
    E \trn =\begin{pmatrix}
        1 &  1   & 0  & 0  & \dots & 0 & 0 & 0 & 0 \\
        0 &  0   & 1  & 1  & \dots & 0 & 0 & 0 & 0 \\
          0  & 0 &0    & 0 & \dots & 1 & 1 & 0 & 0 \\
         0   & 0 &     0 & 0 & \dots & 0 & 0 & 1 & 1 \\
    \end{pmatrix}  .
    \end{split}
\end{equation*}
Block matrix multiplication yields
$$Q \begin{psmallmatrix} I_m & 0 \\ 0 & E \end{psmallmatrix} = \left(\eta_{11}, \eta_{22}, \dots, \eta_{mm}, \eta_{12}, \dots, \eta_{ij}, \dots, \eta_{(m-1)m} \right) \in \mathcal{P}\left( \left[ \frac{m(m+1)}{2} \right] \right),$$
and further observing that for $F = \frac{1}{2} E \trn$,
it holds that $FE = I_{m(m-1)/2}$. Thus
the mappings defined by $\begin{psmallmatrix} I_m & 0 \\ 0 & E \end{psmallmatrix}$ and $\begin{psmallmatrix} I_m & 0 \\ 0 & \frac{1}{2}E \trn \end{psmallmatrix}$ are Markov maps and verify
$\begin{psmallmatrix} I_m & 0 \\ 0 & \frac{1}{2}E \trn \end{psmallmatrix} \begin{psmallmatrix} I_m & 0 \\ 0 & E \end{psmallmatrix} = I_{m(m+1)/2}$.
This finishes proving the claim.
\end{proof}

\begin{theorem}
\label{theorem:edge-measures-e-family}
The set $\mathcal{Q}\rev $ forms an e-family and an m-family of $\mathcal{P}(\mathcal{X}^2)$ with dimension $\abs{\mathcal{X}}(\abs{\mathcal{X}} + 1)/2 - 1$.
Moreover, 
$\mathcal{Q}$ does not form an e-family in $\mathcal{P}(\mathcal{X}^2)$ (except when $\abs{\mathcal{X}} = 2$).
\end{theorem}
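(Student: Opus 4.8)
I would obtain the first assertion from the Markov equivalence of Lemma~\ref{lemma:qrev-markov-map}, and establish that $\mathcal{Q}$ fails to be an e-family by comparing the exponential and mixture representations of its tangent space at two well-chosen points.

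\textbf{That $\mathcal{Q}\rev$ is an e-family and an m-family.} The full simplex $\mathcal{P}([n])$ is simultaneously an e-family and an m-family of dimension $n-1$ (it is the ambient model; see \citep[Example~2.8]{amari2007methods}). By Lemma~\ref{lemma:qrev-markov-map}, $\mathcal{Q}\rev$ is Markov equivalent to $\mathcal{P}([n])$ with $n = \abs{\mathcal{X}}(\abs{\mathcal{X}}+1)/2$, the equivalence being realized by the two Markov maps displayed in the proof of that lemma. I would observe that the relevant embedding $\mathcal{P}([n]) \hookrightarrow \mathcal{P}(\mathcal{X}^2)$ is affine in the mixture (probability) coordinates, being given by a constant matrix, and \emph{also} affine in the exponential (log-probability) coordinates, since it merely duplicates coordinates up to an additive constant. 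An embedding affine in both coordinate systems carries e-families to e-families and m-families to m-families \citep{amari2007methods, nagaoka2017information}; pushing forward $\mathcal{P}([n])$ then shows that $\mathcal{Q}\rev$ is an e-family and an m-family of $\mathcal{P}(\mathcal{X}^2)$ of dimension $n-1 = \abs{\mathcal{X}}(\abs{\mathcal{X}}+1)/2 - 1$. (Alternatively, and self-containedly: reversibility of an edge measure is exactly its symmetry, so $\mathcal{Q}\rev(\mathcal{X}, \mathcal{X}^2) = \set{Q \in \mathcal{P}_+(\mathcal{X}^2) \colon Q = Q\trn}$ is cut out of the simplex by the homogeneous linear equations $Q(x,x') = Q(x',x)$, which are affine both in the probabilities — so $\mathcal{Q}\rev$ is an m-family — and, since $Q(x,x') = Q(x',x)$ iff $\log Q(x,x') = \log Q(x',x)$, in the log-probabilities — so it is an e-family; the log-image is $\mathcal{F}\sym(\mathcal{X},\mathcal{X}^2)$, giving the dimension $\dim\mathcal{F}\sym(\mathcal{X},\mathcal{X}^2) - 1$.)

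\textbf{That $\mathcal{Q}$ is not an e-family unless $\abs{\mathcal{X}} = 2$.} For $\abs{\mathcal{X}} = 2$ every kernel is reversible, so $\mathcal{Q} = \mathcal{Q}\rev$ and there is nothing to prove. Assume $\abs{\mathcal{X}} = m \geq 3$. By \eqref{eq:equivalent-edge-measure-set}, $\mathcal{Q}(\mathcal{X},\mathcal{X}^2) = \set{Q \in \mathcal{P}_+(\mathcal{X}^2) \colon \sum_{x'}Q(x,x') = \sum_{x'}Q(x',x)\ \forall x}$, cut out by linear constraints on the probabilities; hence $\mathcal{Q}$ is an m-family, and its mixture-tangent space at every point is the fixed subspace $W = \set{B \in \R^{\mathcal{X}^2} \colon \sum_{x,x'}B(x,x') = 0,\ \sum_{x'}B(x,x') = \sum_{x'}B(x',x)\ \forall x}$. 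Suppose toward a contradiction that $\mathcal{Q}$ is also an e-family. Then, modulo the constant functions, the exponential representation of its tangent space is a subspace $\mathcal{A}_0 \subseteq \R^{\mathcal{X}^2}$ independent of the base point; since the exponential and mixture representations of a tangent vector at $Q$ differ by entrywise division by $Q$, evaluation at the uniform distribution forces $\mathcal{A}_0 = W + \R\unit$, whereas evaluation at an arbitrary $Q_1 \in \mathcal{Q}$ gives $\set{B(x,x')/Q_1(x,x') \colon B \in W} + \R\unit$. Equating these and clearing denominators, using that $Q_1 \in \mathcal{Q}$ already has equal marginals, reduces the identity $\mathcal{A}_0 = W + \R\unit$ to the requirement that $V \circ Q_1$ have equal marginals for \emph{every} $V \in W$. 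Taking $V$ to be the symmetric, zero-sum function supported on $\set{1,2}^2$ with $V(1,1) = V(2,2) = -1$, $V(1,2) = V(2,1) = 1$, a one-line computation of the marginals of $V \circ Q_1$ shows this forces $Q_1(1,2) = Q_1(2,1)$. Since $m \geq 3$, the set $\mathcal{Q}$ contains members violating this — for instance $Q_1 = U + \eps A$ with $U$ uniform, $\eps > 0$ small, and $A$ a matrix supported on $\set{1,2,3}^2$ with vanishing row and column sums and $A(1,2) \neq A(2,1)$ — so we reach a contradiction. Hence $\mathcal{Q}$ is not an e-family for $m \geq 3$.

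\textbf{Expected main obstacle.} The computational parts (checking that $Q_1 = U + \eps A$ lies in $\mathcal{Q}$, and computing the marginals of $V \circ Q_1$) are bookkeeping. The delicate point is the logic of the non-e-family argument. A tempting shortcut — extend the e-geodesic through two members of the purported e-family and show it leaves $\mathcal{Q}$ — is not airtight, because that geodesic is tangent to $\mathcal{Q}$ only when the two members are close and the natural parameter domain is convex between them. I therefore expect the cleanest, unconditional route to be the tangent-space comparison above, whose only subtlety is careful handling of the ``modulo constants'' normalization.
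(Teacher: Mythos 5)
Your proposal is correct. The first assertion is handled essentially as in the paper: both arguments route through Lemma~\ref{lemma:qrev-markov-map} and the fact that Markov equivalence with a simplex transports the e- and m-structures (the paper cites the equivalence of $(i)$ and $(ii)$ in Nagaoka's theorem where you argue directly that the embedding is affine in both the probability and log-probability coordinates); your self-contained alternative, identifying $\mathcal{Q}\rev(\mathcal{X},\mathcal{X}^2)$ with the symmetric positive distributions and noting that the symmetry constraints are linear both in $Q$ and in $\log Q$, is also valid and gives the dimension $\dim\mathcal{F}\sym - 1$ directly. Where you genuinely diverge is the negative statement about $\mathcal{Q}$. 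The paper exhibits two explicit $3\times 3$ edge measures $Q_0^{(3)}, Q_1^{(3)}$, computes the e-geodesic midpoint (normalized entrywise geometric mean), checks that its marginals fail the stationarity condition \eqref{eq:equivalent-edge-measure-set}, and extends to $\abs{\mathcal{X}}>3$ by one-padding; your argument is instead infinitesimal: since $\mathcal{Q}$ is a relatively open piece of an affine subspace, its m-representation tangent space is the fixed space $W$ at every point, and if $\mathcal{Q}$ were an e-family its generator span (mod constants), pinned down at the uniform distribution to be $W$, would force $Q_1\circ V$ to have equal marginals for every $V\in W$ and every $Q_1\in\mathcal{Q}$; the symmetric test function $V$ supported on $\{1,2\}^2$ then forces $Q_1(1,2)=Q_1(2,1)$, which non-reversible perturbations $U+\eps A$ of the uniform edge measure violate for $\abs{\mathcal{X}}\ge 3$ (I checked the marginal computations and the membership $U+\eps A\in\mathcal{Q}$; they are as you say). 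The paper's route buys a very short, concrete certificate of failure; yours avoids any appeal to global e-geodesic closure of an e-family (the concern you raise about the parameter domain, which the paper handles elsewhere by citing \citet[Corollary~3]{nagaoka2017exponential}) and works uniformly in $\abs{\mathcal{X}}\ge 3$ without a padding step. Both are complete proofs.
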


\begin{proof}
Since $\mathcal{Q}\rev \text{ \GW{$\subset$} } \mathcal{P}(\mathcal{X}^2)$, the claim stems from  the equivalence between $(i)$ and $(ii)$ of \citet[Theorem~1]{nagaoka2017information},
and application of Lemma~\ref{lemma:qrev-markov-map}, and the fact that
$\dim \mathcal{P} \left( \left[ \frac{\abs{\mathcal{X}}(\abs{\mathcal{X}} + 1)}{2} \right]\right) = \frac{\abs{\mathcal{X}}(\abs{\mathcal{X}} + 1)}{2} - 1$.
In order to prove that $\mathcal{Q}$ is not an e-family in $\mathcal{P}(\mathcal{X}^2)$, 
we first construct the following family of edge measures over three states.
\begin{equation*}
\begin{split}
Q_0^{(3)} = \frac{1}{13}\begin{pmatrix} 1 & 1 & 2 \\ 2 & 1 & 2 \\ 1 & 3 & 1 \end{pmatrix}, \qquad
Q_1^{(3)} = \frac{1}{13}\begin{pmatrix} 1 & 3 & 1 \\ 2 & 1 & 2 \\ 2 & 1 & 1 \end{pmatrix}.
\end{split}
\end{equation*}
Computing the point on the e-geodesic in $\mathcal{P}(\mathcal{X}^2)$ at parameter value $1/2$, yields
\begin{equation*}
\begin{split}
Q_{1/2}^{(3)} \propto \begin{pmatrix} 1 & \sqrt{3} & \sqrt{2} \\ 2 & 1 & 2 \\ \sqrt{2} & \sqrt{3} & 1 \end{pmatrix},
\end{split}
\end{equation*}
which does not belong to $\mathcal{Q}$.
We can readily expand the above example to general state space size, $m > 3$, by considering the one-padded versions of the above
$Q_i^{(m)} \propto \begin{psmallmatrix} Q_i^{(3)} & \unit_3 \trn \unit_{m - 3} \\ \unit_{m - 3} \trn \unit_{3} & \unit_{m - 3} \trn \unit_{m - 3} \end{psmallmatrix}$,
for $i \in \set{0, 1}$.
\end{proof}

\begin{remark}
~\begin{enumerate}
    \item[$(i)$] \citet[Theorem~1-$(iv)$]{nagaoka2017information},
actually proves the stronger result that
$\mathcal{Q} \rev$ forms an $\alpha$-family in $\mathcal{P}(\mathcal{X}^2)$, for any $\alpha \in \R$ (see \citet[Section~2.6]{amari2007methods} for a definition of $\alpha$-families).
    \item[$(ii)$] We note but do not pursue here the fact that a more refined treatment over some irreducible edge set $\mathcal{E} \subsetneq \mathcal{X}^2$ is possible.
\end{enumerate}
\end{remark}

\section{Comparison of remarkable families of Markov chains}
\label{section:comparison-usual-classes}

We briefly compare the geometric properties of reversible kernels with that of several other remarkable families of Markov chains, and compile a summary in Table~\ref{table:submanifold-summary}.

\paragraph{Family of all kernels irreducible over $(\mathcal{X}, \mathcal{E})$: $\mathcal{W}(\mathcal{X}, \mathcal{E})$.}

This family is known to form both an e-family and an m-family of dimension $\abs{\mathcal{E}} - \abs{\mathcal{X}}$ \citep[Corollary~1]{nagaoka2017exponential}.

\paragraph{Family of all reversible kernels irreducible over $(\mathcal{X}, \mathcal{E})$: $\mathcal{W} \rev (\mathcal{X}, \mathcal{E})$.}

We show in Theorem~\ref{theorem:quotient-space-dimension-e-family} and Theorem~\ref{theorem:doubly-autoparallel-submanifold} that $\mathcal{W} \rev (\mathcal{X}, \mathcal{E})$ is both an e-family and m-family or dimension $T(\mathcal{E})$,
where
\GW{$$\abs{T(\mathcal{E})} = \frac{\abs{\mathcal{E}}+ \abs{T_0(\mathcal{E})}}{2}  - 1,$$
with $T_0(\mathcal{E}) = \set{ (x,x') \in \mathcal{E} \colon x = x' }$.}

\paragraph{Family of positive memoryless (iid) kernels: $\mathcal{W} \iid (\mathcal{X}, \mathcal{X}^2)$.}

This family comprises degenerate irreducible kernels that correspond to iid processes, i.e. where all rows are equal to the stationary distribution.
Notice that for $P \in \mathcal{W} \iid$, irreducibility forces $P$ to be positive. We show that $\mathcal{W} \iid$ is an e-family of dimension $\abs{\mathcal{X}} - 1$ (Lemma~\ref{lemma:iid-e-family}), but not an m-family (Lemma~\ref{lemma:iid-not-m-family}).

\begin{lemma}
\label{lemma:iid-e-family}
$\mathcal{W} \iid$ forms an e-family of dimension $\abs{\mathcal{X}} - 1$.
\end{lemma}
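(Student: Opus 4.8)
The plan is to exhibit $\mathcal{W}\iid$ explicitly as the image of an affine subspace under the diffeomorphism $\Delta$ of \eqref{equation:delta-diffeo}, and then invoke the correspondence of \citet[Theorem~2]{nagaoka2017exponential} that an e-family is precisely $\Delta(\mathcal{A})$ for $\mathcal{A}$ affine in $\mathcal{G}(\mathcal{X},\mathcal{X}^2)$, with matching dimension. First I would observe that $P \in \mathcal{W}\iid$ means $P(x,x') = \mu(x')$ for some $\mu \in \mathcal{P}_+(\mathcal{X})$ (irreducibility forces positivity), so $\log P(x,x') = \log \mu(x')$ depends on $x'$ only. Thus, setting $g_i(x,x') = \delta_i(x')$ for $i \in [\abs{\mathcal{X}}-1]$ and carrier kernel $K = 0$, I would check that $\{P_\theta : \theta \in \R^{\abs{\mathcal{X}}-1}\}$ with $P_\theta = \stoch(\exp[\sum_i \theta^i g_i])$ recovers exactly $\mathcal{W}\iid$: the matrix $\widetilde{P}_\theta(x,x') = e^{\theta^{x'}}$ (with $\theta^{\abs{\mathcal{X}}} \eqdef 0$) has rank one, its PF root is $\sum_{x'} e^{\theta^{x'}}$, its right PF eigenvector is constant, so after rescaling $P_\theta(x,x') = e^{\theta^{x'}}/\sum_{x''}e^{\theta^{x''}}$, which is memoryless; conversely any positive $\mu$ arises by taking $\theta^{x'} = \log\mu(x') - \log\mu(\abs{\mathcal{X}})$.

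Next I would verify that this is a bona fide e-family in the sense of Definition~\ref{definition:e-family-parametric}: condition $(i)$ holds since each $P_\theta$ is positive hence irreducible over $\mathcal{X}^2$, and condition $(ii)$ is the display above with $R(\theta,x) = 0$ and $\psi(\theta) = \log\sum_{x'}e^{\theta^{x'}}$. Finally, for the dimension count I would check that the chosen $g_i = \delta_i(x')$, $i \in [\abs{\mathcal{X}}-1]$, are linearly independent in the quotient space $\mathcal{G}(\mathcal{X},\mathcal{X}^2) = \mathcal{F}/\mathcal{N}$: a relation $\sum_i \alpha_i \delta_i(x') = f(x') - f(x) + c$ on all of $\mathcal{X}^2$, evaluated by fixing $x'$ and varying $x$, forces $f$ constant, whence $\sum_i \alpha_i \delta_i(x') = c$ for all $x'$, which (since the index set omits one state, where the left side is $0$) forces $c = 0$ and then all $\alpha_i = 0$. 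Hence the family has dimension exactly $\abs{\mathcal{X}}-1$.

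The only genuinely delicate point is the independence-in-the-quotient check, since naive independence in $\mathcal{F}$ is not enough — membership in $\mathcal{N}(\mathcal{X},\mathcal{X}^2)$ must be excluded — but this is handled by the short argument above exploiting that $\delta_i(x')$ is a function of the second coordinate only. Everything else is a routine unwinding of the $\stoch$ map on a rank-one matrix. I would close by noting that the same computation makes the carrier-kernel-free exponential form transparent and, if desired, remark that this family sits inside $\mathcal{W}\rev(\mathcal{X},\mathcal{X}^2)$ since each memoryless kernel trivially satisfies detailed balance with respect to its own stationary distribution — a fact that will be used again in Section~\ref{section:generation-by-completion}.
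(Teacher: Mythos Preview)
Your proof is correct. The approach is closely related to the paper's but differs in framing. The paper invokes Ito's global parametrization of all of $\mathcal{W}(\mathcal{X},\mathcal{X}^2)$ with basis $g_i = \unit\trn\delta_i$ for $i \in [m-1]$ together with $g_{ij} = \delta_i\trn\delta_j$ for $i,j \in [m-1]$, and then argues both directions that $P$ is memoryless if and only if all the mixed parameters $\theta^{ij}$ vanish; $\mathcal{W}\iid$ is thus realized as a coordinate slice of a known e-chart, and the dimension $m-1$ is read off from the number of surviving coordinates. You instead build the e-family from scratch using only the generators $g_i(x,x') = \delta_i(x')$, verify Definition~\ref{definition:e-family-parametric} directly via the rank-one structure of $\widetilde{P}_\theta$, and check independence of the $g_i$ in the quotient $\mathcal{G}$ by hand.

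Both routes use the same generators and the same rank-one/constant-eigenvector observation for one direction. Your argument is more self-contained, since it does not appeal to Ito's ambient chart and handles the quotient-independence explicitly (which the paper gets for free from Ito's parametrization being a known chart). The paper's version, in exchange, places $\mathcal{W}\iid$ visibly inside the full e-coordinate system of $\mathcal{W}$, which dovetails with the hierarchy remark $\mathcal{W}\iid \subsetneq \mathcal{W}\rev \subsetneq \mathcal{W}$ that follows.
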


\begin{proof}
For $\mathcal{X} = [m]$, let us consider the following parametrization proposed by \citet{ito1988}:
\begin{equation*}
\begin{split}
    \log P(x,x') = &\sum_{i = 1}^{m - 1} \log \frac{P(m, i)P(i, m)}{P(m, m)P(m, m)} \delta_i(x') \\
    &+ \sum_{i = 1}^{m-1} \sum_{j = 1}^{m-1} \log \frac{P(i, j) P(m,m)}{P(m,j)P(i,m)} \delta_i(x) \delta_j(x') \\
    &+ \log P(x, m) - \log P(x', m) + \log P(m, m).
\end{split}
\end{equation*}
This corresponds to the basis
\begin{equation*}
\begin{split}
g_i &= \unit \trn \delta_i, \qquad i \in [m-1], \\
g_{ij} &= \delta_i \trn \delta_j, \qquad i,j \in [m-1], \\
\end{split}
\end{equation*}
with parameters
\begin{equation*}
\begin{split}
\theta^i &= \log \frac{P(m, i)P(i, m)}{P(m,m)P(m,m)}, \qquad \theta^{ij} = \log \frac{P(i, j)P(m, m)}{P(m,j)P(i,m)}. \\
\end{split}
\end{equation*}
Let $P$ irreducible with stationary distribution $\pi$.
Suppose first that $P$ is memoryless, i.e. for all $x, x' \in \mathcal{X}$, $P(x,x') = \pi(x')$.
In this case, for all $i,j \in [m - 1]$, the coefficient $\theta^{ij}$ vanishes, and for all $i \in [m - 1]$, it holds that $\theta^i = \pi(i)/\pi(m)$, so that we can write more simply
\begin{equation*}
\begin{split}
    \log P(x,x') = &\sum_{i = 1}^{m - 1} \log \frac{\pi(i)}{\pi(m)} \delta_i(x')  + \log \pi(m).
\end{split}
\end{equation*}
Conversely, now suppose that $\theta^{i j} = 0$ for any $i,j \in [m - 1]$.
Then the matrix
$$ \widetilde{P}(x,x') = \exp \left( \sum_{i = 1}^{m - 1} \log \frac{P(m, i)P(i, m)}{P(m, m)P(m, m)} \delta_i(x') \right)$$
has rank one, the right PF eigenvector is constant, and $P$ is memoryless.
As a result, $\mathcal{W} \iid$ is an e-family of $\mathcal{W}$ such that $\theta^{ij} = 0$ for every $i,j \in [m - 1]$.
\end{proof}

\begin{lemma}
\label{lemma:iid-not-m-family}
$\mathcal{W} \iid$ does not form an m-family.
\end{lemma}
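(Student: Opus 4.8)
The plan is to exhibit a concrete obstruction to $\mathcal{W}\iid$ being an m-family, using the characterization of m-families via affine constraints on edge measures (Definition~\ref{definition:m-family-parametric}-$(iii)$): if $\mathcal{W}\iid$ were an m-family, then there would exist functions $g_1, \dots, g_k \in \mathcal{F}(\mathcal{X},\mathcal{X}^2)$ and constants $c_1, \dots, c_k$ such that $P \in \mathcal{W}\iid$ if and only if $Q[g_i] = c_i$ for all $i$. Equivalently --- and this is the cleaner formulation --- the set of edge measures $\mathcal{Q}\iid \eqdef \set{\diag(\pi)P \colon P \in \mathcal{W}\iid}$ would have to be the intersection of $\mathcal{Q}(\mathcal{X},\mathcal{X}^2)$ with an affine subspace of $\mathcal{F}(\mathcal{X},\mathcal{X}^2)$, i.e. a ``flat'' piece inside the edge-measure simplex. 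But $\mathcal{Q}\iid = \set{\pi \trn \pi \colon \pi \in \mathcal{P}_+(\mathcal{X})}$ is the set of rank-one symmetric edge measures of the tensor-square form, and this set is manifestly not convex (let alone affine) for $\abs{\mathcal{X}} \geq 2$: a convex combination of two such products $\pi_0 \trn \pi_0$ and $\pi_1 \trn \pi_1$ is generally not of the form $\pi \trn \pi$.

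**Key steps.** First I would recall from Lemma~\ref{lemma:iid-e-family} (and the memoryless computation therein) that $P \in \mathcal{W}\iid$ has stationary distribution $\pi$ with $P(x,x') = \pi(x')$, hence edge measure $Q(x,x') = \pi(x)\pi(x')$, so $\mathcal{Q}\iid = \set{\pi\trn\pi \colon \pi \in \mathcal{P}_+(\mathcal{X})}$. Second, I would invoke the fact (already used implicitly throughout, e.g. in Definition~\ref{definition:m-family-parametric}) that an m-family of kernels corresponds, under the edge-measure identification, to an affine slice of $\mathcal{P}(\mathcal{E})$; in particular $\mathcal{Q}\iid$ would have to be closed under those convex combinations that stay inside $\mathcal{P}_+(\mathcal{X}^2)$. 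Third, I would produce an explicit counterexample already for $\abs{\mathcal{X}} = 2$: take $\pi_0 = (1/3, 2/3)$ and $\pi_1 = (2/3, 1/3)$, form the midpoint $Q_{1/2} = \tfrac12 \pi_0\trn\pi_0 + \tfrac12 \pi_1\trn\pi_1$, compute its diagonal entries $Q_{1/2}(1,1) = \tfrac12(1/9 + 4/9) = 5/18$ and $Q_{1/2}(2,2) = 5/18$, and its off-diagonal $Q_{1/2}(1,2) = \tfrac12(2/9 + 2/9) = 4/18$; if $Q_{1/2} = \pi\trn\pi$ then $\pi(1)^2 = \pi(2)^2 = 5/18$ forces $\pi(1) = \pi(2) = \sqrt{5/18}$, but then $\pi(1)\pi(2) = 5/18 \neq 4/18$, a contradiction. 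Hence $Q_{1/2} \notin \mathcal{Q}\iid$ although $Q_{1/2}$ is a strictly positive member of the simplex on the relevant (full) edge set, so $\mathcal{Q}\iid$ is not an affine slice and $\mathcal{W}\iid$ is not an m-family. Finally, for general $\abs{\mathcal{X}} > 2$ I would pad: embed the two-state example by taking $\pi_i$ supported with the above values on coordinates $1,2$ and a fixed positive spread on the remaining coordinates, or more simply note that $\mathcal{Q}\iid$ restricted to the $\set{1,2}$-block already fails convexity, which suffices.

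**Main obstacle.** There is no serious obstacle here; the statement is essentially a non-convexity observation, and the only care needed is (a) to phrase the m-family obstruction correctly --- it is the edge-measure set, not the kernel set, that must be affine, and one must check the counterexample point $Q_{1/2}$ genuinely lies in the ambient simplex $\mathcal{P}(\mathcal{E})$ with the correct support so that it would be forced to lie in $\mathcal{W}\iid$ if the latter were an m-family --- and (b) to make sure the ``padding'' to larger $\abs{\mathcal{X}}$ preserves irreducibility and the memoryless form. Both are routine. The contrast with the e-family case (Lemma~\ref{lemma:iid-e-family}) is the conceptual point worth emphasizing: the m-closure of $\mathcal{W}\iid$ is strictly larger, and indeed by Theorem~\ref{theorem:m-hull-iid-is-reversible} it is precisely $\mathcal{W}\rev$.
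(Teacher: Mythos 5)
Your proposal is correct and follows essentially the same route as the paper: the paper also takes two binary memoryless kernels with swapped biases (yours is the case $p=1/3$), forms the midpoint of their edge measures, checks the resulting kernel fails to be memoryless, and pads to larger state spaces. The only cosmetic difference is that you argue at the level of edge measures (showing $Q_{1/2}$ is not of the form $\pi\trn\pi$) while the paper computes the midpoint kernel $P_{1/2}$ explicitly and observes it is not in $\mathcal{W}\iid$; these are equivalent.
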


\begin{proof}
We prove the case $\abs{\mathcal{X}} = 2$ and $p \neq 1/2$,
\begin{equation*}
\begin{split}
P_0 = \begin{pmatrix} p & 1 - p \\ p & 1 - p \end{pmatrix}, \qquad P_1 = \begin{pmatrix}  1 - p & p \\ 1 - p & p \end{pmatrix}.
\end{split}
\end{equation*}
Computing the corresponding edge measures,
\begin{equation*}
\begin{split}
Q_0 = \begin{pmatrix} p^2 & p(1 - p) \\ p(1 - p) & (1 - p)^2 \end{pmatrix}, \qquad Q_1 = \begin{pmatrix}  (1 - p)^2 & p(1-p) \\ p(1 - p) & p^2 \end{pmatrix}.
\end{split}
\end{equation*}
But then if we let
\begin{equation*}
\begin{split}
Q_{1/2} = \frac{1}{2} Q_{0} + \frac{1}{2} Q_1 = \begin{pmatrix} \frac{1}{2}(p^2 + (1- p)^2) & p(1 - p) \\ p(1 - p) & \frac{1}{2}(p^2 + (1- p)^2) \end{pmatrix},
\end{split}
\end{equation*}
we see that the stationary distribution is $\pi_{1/2} = \unit / 2$, and 
$$P_{1/2} = \begin{pmatrix} p^2 + (1 - p)^2 & 2p(1-p) \\ 2p(1-p) & p^2 + (1 - p)^2 \end{pmatrix}.$$ 
But for $p \neq 0$, $P_{1/2}$ does not belong to $\mathcal{W} \iid $, hence 
the family is not an m-family.
The proof can be extended to the more general $\mathcal{X} = [m], m >2 $ by considering instead the two kernels defined by $\pi_{p} = ( p, 1 - p, 1, \dots, 1 )/(m - 1)$ and $\pi_{1 - p}$ for $p \in (0, 1), p \neq 1/2$.
\end{proof}

For simplicity, in the remainder of this section, we mostly consider the full support case.

\paragraph{Family of positive doubly-stochastic kernel: $\mathcal{W} \bis  (\mathcal{X}, \mathcal{X}^2)$.}

Recall that a kernel $P$ is said to be doubly-stochastic, or bi-stochastic, when $P$ and $P \trn$ are both stochastic matrices. In this case, the stationary distribution is always uniform.
It is known that the set of doubly stochastic Markov chains forms an m-family of dimension $(\abs{\mathcal{X}} - 1)^2$ \citep[Example~4]{hayashi2014information}.
However, as a consequence of Lemma~\ref{lemma:symmetric-not-exponential}, it does not form an e-family (except when $\abs{\mathcal{X}} = 2$).

\paragraph{Family of positive symmetric kernel: $\mathcal{W} \sym (\mathcal{X}, \mathcal{X}^2)$.}

A Markov kernel is symmetric, when $P = P \trn$,
hence this family lies at the intersection between reversible and doubly-stochastic families of Markov kernels, which are both m-families. This implies that symmetric kernels also form
an m-family. In fact, Lemma~\ref{lemma:symmetric-kernels-is-m-family} shows that the dimension of this family is $\abs{\mathcal{X}}(\abs{\mathcal{X}} - 1)/2$.  Lemma~\ref{lemma:symmetric-not-exponential}, however, shows that $\mathcal{W} \sym$ only forms an e-family for $\abs{\mathcal{X}} = 2$.

\begin{lemma}
\label{lemma:symmetric-kernels-is-m-family}
$\mathcal{W} \sym$ forms an m-family of dimension $\abs{\mathcal{X}}(\abs{\mathcal{X}} - 1)/2$.
\end{lemma}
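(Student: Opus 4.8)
The plan is to push everything into edge-measure coordinates, where the symmetry constraint becomes linear, and then read off both the mixture-family structure and the dimension from the resulting affine slice of $\mathcal{F}\sym(\mathcal{X},\mathcal{X}^2)$. First I would record the elementary fact that a symmetric stochastic matrix is automatically doubly-stochastic (the columns of $P$ are the rows of $P\trn=P$), hence irreducible symmetric kernels have uniform stationary distribution $\pi=\unit\trn\unit$-normalised, i.e. $\pi(x)=1/m$ with $m\eqdef\abs{\mathcal{X}}$. Consequently the edge measure of $P\in\mathcal{W}\sym$ is simply $Q=\diag(\pi)P=P/m$, which is symmetric and has all row sums equal to $1/m$. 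Conversely, if $Q\in\mathcal{Q}(\mathcal{X},\mathcal{X}^2)$ is symmetric with every row summing to $1/m$, then the associated kernel is $P=mQ$, which is symmetric. So $P\in\mathcal{W}\sym$ if and only if its edge measure is symmetric with constant row sums — two families of linear constraints on $Q$.

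Next I would dispatch the "m-family" part. Since $\mathcal{W}\sym$ is precisely the set of $P\in\mathcal{W}(\mathcal{X},\mathcal{X}^2)$ whose edge measure satisfies $Q(x,x')-Q(x',x)=0$ for $x<x'$ and $\sum_{x'}Q(x,x')=1/m$ for each $x$, it is of the form in Definition~\ref{definition:m-family-parametric}-$(iii)$; equivalently, it is exactly $\mathcal{W}\rev(\mathcal{X},\mathcal{X}^2)\cap\mathcal{W}\bis(\mathcal{X},\mathcal{X}^2)$, and adjoining the defining linear constraints on $Q$ of two mixture families again yields a mixture family. It only remains to note that this set is non-empty with full support — the uniform kernel $P=\tfrac1m\unit\trn\unit$ lies in it — so that the slice is genuinely cut out to the claimed codimension and not to something smaller.

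For the dimension I would compute inside the ambient vector space $\mathcal{F}\sym(\mathcal{X},\mathcal{X}^2)$ of symmetric matrices, which has dimension $\abs{T_+(\mathcal{X}^2)}+\abs{T_0(\mathcal{X}^2)}=\binom{m}{2}+m=m(m+1)/2$. The edge measures of $\mathcal{W}\sym$ form a non-empty open subset (around the uniform one) of the affine subspace of $\mathcal{F}\sym$ defined by the $m$ row-sum functionals $S\mapsto\sum_{x'}S(x,x')$. These functionals are linearly independent on $\mathcal{F}\sym$: pairing a hypothetical linear relation with the symmetric matrix unit $E^{(i)}$ having its sole nonzero entry at $(i,i)$ isolates the $i$-th coefficient and forces it to vanish. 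Hence the affine slice — and therefore $\mathcal{W}\sym$, since the dimension of a mixture family equals that of the affine hull of its edge measures (Definition~\ref{definition:m-family-parametric}-$(i)$) — has dimension $m(m+1)/2-m=m(m-1)/2=\abs{\mathcal{X}}(\abs{\mathcal{X}}-1)/2$. Concretely one takes $Q_0$ the uniform edge measure and $Q_i=Q_0+\varepsilon F_i$ for a basis $(F_i)$ of the direction space $\{S\in\mathcal{F}\sym:\sum_{x'}S(x,x')=0\ \forall x\}$, with $\varepsilon$ small enough to preserve positivity, giving $m(m-1)/2$ affinely independent generators. The main (and only mildly delicate) step is this independence of the row-sum functionals on the space of symmetric matrices, together with the check that the entire affine slice — not a proper subset of it — is realized by bona fide symmetric kernels; both are settled by the matrix-unit test and by positivity of the uniform kernel, so I do not anticipate a genuine obstacle.
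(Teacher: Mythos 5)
Your proof is correct, and it reaches the result by a route that is dual to the paper's. The paper works directly with Definition~\ref{definition:m-family-parametric}-$(ii)$: it writes down an explicit affine parametrization of the edge measures, $Q = s_0 + \sum_{i>j} Q(i,j)\, s_{ij}$ with $s_0(x,x') = \delta_x(x')/\abs{\mathcal{X}}$ and explicit symmetric generators $s_{ij}$ of total mass zero, and then proves linear independence of the $s_{ij}$ by evaluating a vanishing combination at the entries $(i,j)$, which simultaneously gives the m-family structure and the dimension $\abs{\mathcal{X}}(\abs{\mathcal{X}}-1)/2$. You instead characterize $\mathcal{W}\sym$ by linear \emph{constraints} on $Q$ (symmetry plus row sums equal to $1/\abs{\mathcal{X}}$, after the observation that symmetric kernels are doubly stochastic with uniform stationary distribution), invoke Definition~\ref{definition:m-family-parametric}-$(iii)$, and obtain the dimension by a codimension count inside $\mathcal{F}\sym$ — checking independence of the row-sum functionals via diagonal matrix units — together with a perturbation of the uniform edge measure to exhibit affinely independent generators in the sense of Definition~\ref{definition:m-family-parametric}-$(i)$. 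Both arguments hinge on the same underlying fact (edge measures of symmetric kernels are exactly the positive symmetric matrices with constant row sums); the paper's version buys an explicit basis of generating functions usable elsewhere, while yours makes the ``why the dimension is $\abs{\mathcal{X}}(\abs{\mathcal{X}}+1)/2 - \abs{\mathcal{X}}$'' count transparent and avoids verifying a reconstruction identity entry by entry. Your step confirming that the positive part of the affine slice is exactly realized by kernels of $\mathcal{W}\sym$ (via $P = \abs{\mathcal{X}}\,Q$) is the right thing to check and is handled adequately; the only cosmetic slip is the phrase ``$\pi = \unit\trn\unit$-normalised'' for the uniform distribution, which should just read $\pi(x) = 1/\abs{\mathcal{X}}$.
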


\begin{proof}
To prove the claim, we will rely on Definition~\ref{definition:m-family-parametric}-$(ii)$ of a mixture family.
Consider the functions $s_0 \colon \mathcal{X}^2 \to \R$ and 
$s_{ij} \colon \mathcal{X}^2 \to \R$ for $i,j \in \mathcal{X}, i > j$ such that for any $x,x' \in \mathcal{X}$, $s_0(x,x') = \delta_x(x')/\abs{\mathcal{X}}$ and 
$s_{ij} = \delta_i \trn \delta_j + \delta_j \trn \delta_i - 2 \delta_i \trn \delta_i$.
Let $Q \in \mathcal{Q}$, we verify that for any $x,x' \in \mathcal{X}$,
\begin{equation*}
    \begin{split}
    Q(x,x') &= s_0(x,x') + \sum_{\substack{i,j \in \mathcal{X} \\ i > j} } s_{ij}(x,x') Q(i, j),
    \end{split}
\end{equation*}
and moreover
\begin{equation*}
    \begin{split}
    \sum_{x,x' \in \mathcal{X}} s_0(x,x') = 1, \qquad
    \sum_{x,x' \in \mathcal{X}} s_{i j}(x,x') = 0, \forall i,j \in \mathcal{X}, i > j. \\
    \end{split}
\end{equation*}
It remains to show that the $s_0, s_0 + s_{ij}$, for $i > j$, are affinely independent, or equivalently, that the $s_{ij}$, for $i > j$, are linearly independent. Let $s = \sum_{i > j} \alpha_{ij} s_{ij}$ with $\alpha_{ij} \in \R$, for any $i > j$, be such that $s = 0$. For any $i > j$, taking $x = i, x'=j$ yields $\alpha_{ij} = 0$, thus the family is independent, hence constitutes a basis, and the dimension is $\abs{\set{i,j \in \mathcal{X} \colon i > j}} = \abs{\mathcal{X}}(\abs{\mathcal{X}} - 1) / 2$.
\end{proof}

\begin{lemma}
\label{lemma:symmetric-not-exponential}
For \GWS{$\abs{\mathcal{X}} \geq 2$}, 
\begin{enumerate}
    \item[$(i)$] 
The set $\mathcal{W} \sym$ does not form an e-family, \GWS{unless $\abs{\mathcal{X}} = 2$.}
    \item[$(ii)$]
The set $\mathcal{W} \bis$ does not form an e-family, \GWS{unless $\abs{\mathcal{X}} = 2$.}
\end{enumerate}

\end{lemma}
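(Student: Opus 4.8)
The plan is to exploit that an e-family, being autoparallel for the e-connection $\nabla^{(e)}$, must contain the $\nabla^{(e)}$-geodesic through any two of its members; I will exhibit two symmetric kernels whose e-geodesic leaves $\mathcal{W}\sym$, and then deduce the statement for $\mathcal{W}\bis$ formally from $(i)$. The one ingredient needed beforehand is an observation about the rescaling map $\stoch$ of \eqref{equation:stochastic-version}: if $\widetilde{P}$ is a \emph{symmetric} positive matrix, then $\widetilde{P} = \widetilde{P}\trn$, so its left and right PF eigenvectors coincide; writing $v$ for the common eigenvector and $\rho$ for the PF root, $\stoch(\widetilde{P})(x,x') = \widetilde{P}(x,x')v(x')/\big(\rho\, v(x)\big)$. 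Cancelling $\widetilde{P}(x,x') = \widetilde{P}(x',x) > 0$, one sees that $\stoch(\widetilde{P})$ is symmetric iff $v(x)^2 = v(x')^2$ for all $x,x'$, i.e. (by positivity) iff $v$ is constant, i.e. iff $\widetilde{P}\unit\trn \propto \unit\trn$, i.e. iff all row sums of $\widetilde{P}$ are equal.

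For $(i)$, assume $\abs{\mathcal{X}} = m \geq 3$ and recall from Section~\ref{section:reversible-geodesics} that the $\nabla^{(e)}$-geodesic through $P_0, P_1 \in \mathcal{W}\rev(\mathcal{X},\mathcal{X}^2)$ passes through the kernels $\stoch(\widetilde{P}_\theta)$ with $\widetilde{P}_\theta(x,x') = P_0(x,x')^{1-\theta}P_1(x,x')^{\theta}$. I would take $P_0$ to be the uniform kernel $P_0(x,x') = 1/m$, and $P_1$ the symmetric stochastic kernel obtained from $P_0$ by replacing the corner block by $P_1(1,1) = P_1(2,2) = 1/m - \eps$ and $P_1(1,2) = P_1(2,1) = 1/m + \eps$ for some $0 < \eps < 1/m$; both are positive, symmetric and stochastic. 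Then $\widetilde{P}_{1/2}(x,x') = \sqrt{P_0(x,x')P_1(x,x')}$ is symmetric, its first row sums to $m^{-1/2}\big(\sqrt{1/m-\eps}+\sqrt{1/m+\eps}+(m-2)\sqrt{1/m}\big) < m^{-1/2}\cdot m\sqrt{1/m} = 1$ by strict concavity of the square root, whereas (this is where $m \geq 3$ is used) its third row — untouched by the perturbation — sums to exactly $1$. Hence $\widetilde{P}_{1/2}$ does not have constant row sums, so by the observation above $\stoch(\widetilde{P}_{1/2})$ is not symmetric: the e-geodesic joining $P_0$ and $P_1$ leaves $\mathcal{W}\sym$, so $\mathcal{W}\sym$ is not autoparallel for $\nabla^{(e)}$, hence not an e-family.

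For $(ii)$, I would first note that for positive kernels $\mathcal{W}\sym(\mathcal{X},\mathcal{X}^2) = \mathcal{W}\rev(\mathcal{X},\mathcal{X}^2) \cap \mathcal{W}\bis(\mathcal{X},\mathcal{X}^2)$: a symmetric stochastic matrix is doubly stochastic, has uniform stationary distribution, and trivially satisfies detailed balance, while a doubly-stochastic reversible kernel has $\pi = \unit/m$, whence detailed balance forces $P = P\trn$. Now suppose $\mathcal{W}\bis$ were an e-family. By the correspondence recalled after \eqref{equation:delta-diffeo}, $\mathcal{W}\bis = \Delta(\mathcal{A}_1)$ for an affine subspace $\mathcal{A}_1$ of $\mathcal{G}(\mathcal{X},\mathcal{X}^2)$, and likewise $\mathcal{W}\rev = \Delta(\mathcal{A}_2)$ for an affine subspace $\mathcal{A}_2$, since $\mathcal{W}\rev$ is an e-family by Theorem~\ref{theorem:quotient-space-dimension-e-family}. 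As $\Delta$ is a diffeomorphism, $\mathcal{W}\sym = \mathcal{W}\rev \cap \mathcal{W}\bis = \Delta(\mathcal{A}_1 \cap \mathcal{A}_2)$, and $\mathcal{A}_1 \cap \mathcal{A}_2$ is again affine and nonempty (it contains $\Delta^{-1}$ of the uniform kernel), so $\mathcal{W}\sym$ would be an e-family, contradicting $(i)$.

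The main obstacle is making Step $(i)$ work uniformly in $m$ rather than only for $m = 3$ — which is exactly the role of the hypothesis $\abs{\mathcal{X}} \geq 3$, since the argument needs a row of $\widetilde{P}_{1/2}$ that the perturbation leaves equal to that of the uniform kernel — together with correctly justifying the $\stoch$-versus-symmetry dichotomy via Perron--Frobenius theory; once those are in place, $(ii)$ is purely formal, its only delicate point being that e-families are closed under intersection, which is immediate from the affine-subspace description.
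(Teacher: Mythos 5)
Your argument is correct, and its core is the same as the paper's: for $(i)$ you run the e-geodesic $\theta\mapsto\stoch(P_0^{1-\theta}\circ P_1^{\theta})$ between two symmetric kernels, observe that for symmetric $\widetilde{P}$ the kernel $\stoch(\widetilde{P})$ is symmetric iff the right PF eigenvector is constant iff all row sums of $\widetilde{P}$ agree, and then show the midpoint $\theta=1/2$ violates this; for $(ii)$ you reduce to $(i)$ via $\mathcal{W}\sym=\mathcal{W}\rev\cap\mathcal{W}\bis$ together with the fact that $\mathcal{W}\rev$ is an e-family. The differences are refinements rather than a different route: your $2\times 2$ perturbation of the uniform kernel, with strict concavity of the square root giving unequal row sums at $\theta=1/2$, works uniformly for all $\abs{\mathcal{X}}\geq 3$ and so dispenses with the paper's separate $\abs{\mathcal{X}}=3$ computation plus one-padding extension; and for $(ii)$ you use closure of e-families under intersection (via the affine-subspace characterization $\mathcal{V}=\Delta(\mathcal{A})$), whereas the paper argues along a single e-geodesic between symmetric kernels, which stays reversible and, under the contradiction hypothesis, doubly stochastic, hence symmetric. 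Both versions of $(ii)$ rest on the same identity ``doubly stochastic $\cap$ reversible $=$ symmetric,'' which you verify correctly. The only point you leave untouched is the exceptional binary case: the paper's proof also checks that for $\abs{\mathcal{X}}=2$ the family $\mathcal{W}\sym=\mathcal{W}\bis$ \emph{is} a one-dimensional e-family (carrier $K=0$, generator $\delta_x(x')$), which is the content of the ``unless $\abs{\mathcal{X}}=2$'' clause and of the remark in Table~\ref{table:submanifold-summary}; if that clause is read as part of the claim, you should add this short verification.
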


\begin{proof}
\GWS{We first treat the case $\abs{\mathcal{X}} = 2$ for $(i)$ and $(ii)$.
Notice that 
$$P_\theta = \begin{pmatrix} \frac{e^\theta}{1 + e^\theta} & \frac{1}{1 + e^\theta} \\ \frac{1}{1 + e^\theta} & \frac{e^\theta}{1 + e^\theta} \end{pmatrix}$$ for $\theta \in \R$ satisfies $P_\theta \in \mathcal{W}\sym$, and that the latter expression exhausts all irreducible symmetric chains. We can therefore write 
    $$\mathcal{W} \sym = \set{P_\theta \colon P_\theta(x,x') = \exp\left( \delta_x(x')  \theta - \log (e^\theta + 1) \right), \theta \in \R },$$
    which follows the defintion at \eqref{eq:e-family-expression} of an e-family with carrier kernel $K = 0$, generator $g(x,x') = \delta_x(x')$, natural parameter $\theta$, $R_\theta = 0$ and potential function $\psi_\theta = \log(e^\theta + 1)$.
Furthermore, for $\abs{\mathcal{X}} = 2$, it is easy to see that symmetric and doubly-stochastic families coincide, hence $\mathcal{W} \bis$ is also an e-family.}\\\\
We now prove $(i)$ for $\abs{\mathcal{X}} = 3$.
We will consider two positive symmetric Markov kernels $P_0$ and $P_1$, and look at the e-geodesic
$$G_e(P_0, P_1) \eqdef \set{ \stoch \left( \widetilde{P}_\theta \right) : \widetilde{P}_\theta = P_0(x, x')^{1 - \theta} P_{1}(x, x')^\theta, \theta \in [0,1] },$$
where the map $\stoch$, defined in \eqref{equation:stochastic-version}, enforces stochasticity.
The matrix $P_\theta(x, x') = \stoch(\widetilde{P}_\theta)$
is symmetric, if and only if the right eigenvector of $\widetilde{P}_\theta$ is constant.
This, in turn, is equivalent to the rows of $\widetilde{P}_\theta$ being all equal.
Consider the two symmetric kernels
$$P_0 = \begin{pmatrix} \alpha & 2/3 - \alpha & 1/3 \\ 2/3 - \alpha & \alpha & 1/3 \\ 1/3 & 1/3 & 1/3 \end{pmatrix}, 
\; \; P_1 = \frac{1}{3}\begin{pmatrix} 1 & 1 & 1 \\
1 & 1 & 1 \\
1 & 1 & 1 \\
\end{pmatrix},$$
with free parameter $\alpha \neq 1/3$, 
and let us inspect the curve at parameter $\theta = 
1/2$. 
For $P_{1/2}$ to be symmetric, it is necessary that
$$\sqrt{\alpha} + \sqrt{2/3 - \alpha} = 2\sqrt{1/3},$$
whose unique solution is precisely $\alpha = 1/3$. 
Invoking \citet[Corollary~3]{nagaoka2017exponential}
finishes proving $(i)$ for $\abs{\mathcal{X}} = 3$.
We extend the proof to $\abs{\mathcal{X}} \geq 4$ using the padding argument of Theorem~\ref{theorem:edge-measures-e-family}, considering $\frac{1}{m}\begin{psmallmatrix} 3 P_i & \unit \trn \unit \\ \unit \trn \unit & \unit \trn \unit \end{psmallmatrix}$.
Suppose for contradiction that $(ii)$ is false, i.e. bi-stochastic matrices form an e-family. Take then any e-geodesic between two arbitrary symmetric kernels. The latter operators being reversible, so is the geodesic. But then this curve must also be composed entirely of symmetric matrices, hence the geodesic is symmetric, which contradicts $(i)$.
\end{proof}

\begin{remark}
For $\abs{\mathcal{X}} \geq 3$, the following hierarchies hold:

$$\mathcal{W} \iid \stackrel{\text{e-family}}{\subsetneq} \mathcal{W} \rev \stackrel{\text{e-family}}{\subsetneq} \mathcal{W},$$
$$\mathcal{W} \sym \stackrel{\text{m-family}}{\subsetneq} \mathcal{W} \rev  \stackrel{\text{m-family}}{\subsetneq} \mathcal{W},$$
$$ \mathcal{W} \sym  \stackrel{\text{m-family}}{\subsetneq} \mathcal{W} \bis  \stackrel{\text{m-family}}{\subsetneq} \mathcal{W}.$$
\end{remark}

\begin{table}[H]
\caption{Summary of geometric properties of submanifolds of irreducible Markov kernels ($\abs{\mathcal{X}} \geq 3$). We also include, for completeness, the one dimensional manifolds defined by the e-geodesic $G_e (P_0, P_1)$ and  m-geodesic $G_m (P_0, P_1)$ between two irreducible kernels $P_0$ and $P_1$, as defined in Section~\ref{section:reversible-geodesics}. Note that for the binary case $\abs{\mathcal{X}} = 2, \mathcal{W} \sym= \mathcal{W} \bis$ forms an e-family.}
\begin{center}
    \begin{tabular}{ | c | c | c | c | c |}
    \hline
    Manifold & m-family & e-family  & Dimension \\ \hline
    $\mathcal{W}(\mathcal{X}, \mathcal{E}) $ & \cmark & \cmark &  $\abs{\mathcal{E}} - \abs{\mathcal{X}}$ %
    \\ \hline
    $\mathcal{W}(\mathcal{X}, \mathcal{X}^2) $ & \cmark & \cmark &  $\abs{\mathcal{X}}(\abs{\mathcal{X}} - 1)$ %
    \\ \hline
    $\mathcal{W} \rev (\mathcal{X}, \mathcal{E})$ & \cmark & \cmark &  $\abs{T(\mathcal{E})}$ \\ %
    \hline
    $\mathcal{W} \rev (\mathcal{X}, \mathcal{X}^2)$ & \cmark & \cmark &  $\abs{\mathcal{X}}(\abs{\mathcal{X}} + 1)/2 - 1$ \\
    \hline
    $\mathcal{W} \bis (\mathcal{X}, \mathcal{X}^2)$ & \cmark & \xmark &  $(\abs{\mathcal{X}}-1)^2$ \\
    \hline
    $\mathcal{W} \sym (\mathcal{X}, \mathcal{X}^2)$ & \cmark & \xmark &  $\abs{\mathcal{X}}(\abs{\mathcal{X}} -1 )/2 $ \\
    \hline
    $\mathcal{W} \iid (\mathcal{X}, \mathcal{X}^2)$ & \xmark & \cmark &  $\abs{\mathcal{X}} - 1$ \\
    \hline
    $G_m (P_0, P_1)$ & \cmark & \xmark  &  1 \\
    \hline
    $G_e (P_0, P_1)$ & \xmark & \cmark &   1 \\
    \hline
    \end{tabular}
\end{center}
\label{table:submanifold-summary}
\end{table}

\section{Generation of the reversible family}
\label{section:generation-by-completion}
In this final section, we consider the 
family of positive Markov kernels
$\mathcal{W} = \mathcal{W}(\mathcal{X},\mathcal{X}^2)$
i.e. where the support $\mathcal{E} = \mathcal{X}^2$. 
We first show that 
$\mathcal{W} \rev$ is in a sense the smallest exponential family that contains $\mathcal{W} \sym$, the family of symmetric Markov kernels. Our notion of minimality relies on the following definition of the exponential hull of some submanifold of $\mathcal{W}$.

\begin{definition}[Exponential hull]
\label{definition:e-hull}
Let $\mathcal{V} \text{ \GW{$\subset$} } \mathcal{W}$.
\begin{equation*}
\begin{split}
\ehull(\mathcal{V}) = \Bigg\{ \stoch(\widetilde{P}) &\colon \log [\widetilde{P}] = \sum_{i = 1}^{k} \alpha_i \log [P_i ], \\
&k \in \N, \alpha_1, \dots, \alpha_k  \in \R, \sum_{i = 1}^{k} \alpha_i = 1, P_1, \dots P_k \in \mathcal{V} \Bigg\},    
\end{split}
\end{equation*}
where $\stoch$ is defined in \eqref{equation:stochastic-version}.
\end{definition}

\GWS{Remark: When $U = \frac{1}{\abs{\mathcal{X}}} \unit \trn \unit \in \mathcal{V}$, the constraint $\sum_{i = 1}^{k} \alpha_i = 1$ is redundant. 
Indeed, since $U$ corresponds to the origin in e-coordinates, the linear hull and affine hull coincide in this case.}

\begin{theorem}
\label{theorem:e-hull-symmetric-is-reversible}
For $\abs{\mathcal{X}} \geq 3$, it holds that
$$\ehull(\mathcal{W} \sym) = \mathcal{W} \rev .$$
\end{theorem}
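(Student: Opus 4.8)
The plan is to collapse the statement to a single linear-algebra identity and then prove that identity by duality. Write $m = \abs{\mathcal{X}}$, $U = \tfrac{1}{m}\unit\trn\unit$, $\Delta = \stoch \circ \exp$, and
\[
\mathcal{L} \eqdef \spann\set{ \log[P] \colon P \in \mathcal{W}\sym(\mathcal{X}, \mathcal{X}^2) } \subseteq \mathcal{F}\sym(\mathcal{X}, \mathcal{X}^2).
\]
Since $U \in \mathcal{W}\sym$, the affine constraint $\sum_i \alpha_i = 1$ in Definition~\ref{definition:e-hull} is redundant (see the remark immediately following that definition), so $\ehull(\mathcal{W}\sym) = \Delta(\mathcal{L})$. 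The inclusion $\ehull(\mathcal{W}\sym) \subseteq \mathcal{W}\rev$ is then immediate: each $\log[P]$ with $P \in \mathcal{W}\sym$ is a symmetric function, hence log-reversible, hence lies in $\mathcal{F}\sym \subseteq \mathcal{F}\rev$ (Lemma~\ref{lemma:subspace-log-reversible}, Theorem~\ref{theorem:characterization-reversible-e-family}), so $\mathcal{L} \subseteq \mathcal{F}\rev$ and $\Delta(\mathcal{L}) \subseteq \Delta(\mathcal{F}\rev) = \mathcal{W}\rev$ by Theorem~\ref{theorem:quotient-space-dimension-e-family}. For the reverse inclusion, recall from Remark~\ref{remark:log-reversible-as-direct-sum} that $\mathcal{F}\rev = \mathcal{F}\sym \oplus \mathcal{N}_0$ with $\mathcal{N}_0 \subseteq \mathcal{N}$; thus every $\mathcal{N}$-coset of $\mathcal{F}\rev$ has a representative in $\mathcal{F}\sym$, and since $\Delta$ is constant on $\mathcal{N}$-cosets we get $\mathcal{W}\rev = \Delta(\mathcal{F}\rev) = \Delta(\mathcal{F}\sym)$. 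Hence it suffices to prove the key identity $\mathcal{L} = \mathcal{F}\sym(\mathcal{X}, \mathcal{X}^2)$.

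To prove it, suppose $\mathcal{L} \subsetneq \mathcal{F}\sym$. As $\mathcal{F}\sym$ is finite-dimensional, there is a nonzero symmetric function $c$ such that $\langle c, \log[P]\rangle \eqdef \sum_{x, x'} c(x, x') \log P(x, x') = 0$ for every $P \in \mathcal{W}\sym(\mathcal{X}, \mathcal{X}^2)$. For distinct $i, j \in \mathcal{X}$ and $t \in (0, 1/m)$, the matrix
\[
P_t^{ij} \eqdef U + t\left( \delta_i\trn\delta_j + \delta_j\trn\delta_i - \delta_i\trn\delta_i - \delta_j\trn\delta_j \right)
\]
is an entrywise-positive, symmetric, row-stochastic matrix, i.e.\ $P_t^{ij} \in \mathcal{W}\sym(\mathcal{X}, \mathcal{X}^2)$. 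Only its $(i,i), (j,j), (i,j), (j,i)$ entries depend on $t$, so writing $A = c(i,i) + c(j,j)$, $B = 2c(i,j)$ (using $c = c\trn$) and $s = \sum_{x,x'} c(x,x')$, one finds
\[
0 = \langle c, \log[P_t^{ij}] \rangle = A \log\!\left( \tfrac{1}{m} - t \right) + B \log\!\left( \tfrac{1}{m} + t \right) + (\log m)(A + B - s).
\]
The functions $1$, $\log(\tfrac{1}{m} - t)$, $\log(\tfrac{1}{m} + t)$ are linearly independent on $(0, 1/m)$, so $A = B = 0$ for every pair, i.e.\ $c(i,j) = 0$ and $c(i,i) + c(j,j) = 0$ for all distinct $i, j$ (vanishing of the constant coefficient then gives $s = 0$ as well). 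When $\abs{\mathcal{X}} \geq 3$, picking for a fixed $i$ two further distinct indices $j, k$ and combining $c(i,i) + c(j,j) = c(i,i) + c(k,k) = c(j,j) + c(k,k) = 0$ yields $c(i,i) = 0$; hence $c = 0$, a contradiction. Therefore $\mathcal{L} = \mathcal{F}\sym(\mathcal{X}, \mathcal{X}^2)$, which completes the proof.

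The only delicate point is the reverse inclusion, and within it the assertion that $\mathcal{L}$ is all of $\mathcal{F}\sym$: the m-family $\mathcal{W}\sym$ has dimension only $\binom{m}{2}$ (Lemma~\ref{lemma:symmetric-kernels-is-m-family}), whereas $\dim \mathcal{F}\sym = \binom{m+1}{2}$, so the identity cannot come from a first-order (tangent-space) computation at $U$ — differentiating $\langle c, \log[P_t^{ij}]\rangle$ at $t = 0$ only shows that $c$ is annihilated by the $\binom{m}{2}$-dimensional space of symmetric zero-row-sum functions. The gain comes precisely from testing the functional along the whole curves $P_t^{ij}$, i.e.\ from the curvature of the log-image of $\mathcal{W}\sym$. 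The hypothesis $\abs{\mathcal{X}} \geq 3$ is used only in the very last step: for $\abs{\mathcal{X}} = 2$ the functional $c = \delta_1\trn\delta_1 - \delta_2\trn\delta_2$ survives (it kills $\log[P]$ for every symmetric $2 \times 2$ kernel), and correspondingly $\ehull(\mathcal{W}\sym) \subsetneq \mathcal{W}\rev = \mathcal{W}$ in that case.
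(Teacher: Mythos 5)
Your proposal is correct, and while the easy inclusion and the overall reduction (it suffices to realize symmetric generator functions as linear combinations of $\log[P]$ with $P \in \mathcal{W}\sym$, everything being read modulo $\mathcal{N}$ through $\Delta = \stoch \circ \exp$) mirror the paper, the key step is executed by a genuinely different mechanism. The paper works in the quotient $\mathcal{G}\rev = \mathcal{F}\rev/\mathcal{N}$ and is constructive: it takes the same type of test kernels (uniform kernel perturbed on the $2\times 2$ principal block indexed by $i,j$), explicitly inverts a $2\times 2$ system in the coefficients $a=\log 2(1-t)$, $b=\log 2t$ to produce each basis element $g_{ij}=\delta_i\trn\delta_j+\delta_j\trn\delta_i$ inside $\spann(\log[\mathcal{W}\sym]\cup\mathcal{N})$, and then recovers the diagonal generators via the identity $I=\unit\trn\unit-\sum_{(i,j)\in T_+(\mathcal{X}^2)}g_{ij}$, which is where the division by $\abs{\mathcal{X}}-2$ forces $\abs{\mathcal{X}}\ge 3$. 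You instead prove the stronger linear-algebraic statement that $\spann\set{\log[P]\colon P\in\mathcal{W}\sym}$ equals all of $\mathcal{F}\sym$ (no need to adjoin $\mathcal{N}$), by an annihilator argument: a symmetric $c$ killing every $\log[P_t^{ij}]$ must have $c(i,i)+c(j,j)=0$ and $c(i,j)=0$ by linear independence of $1$, $\log(1/m-t)$, $\log(1/m+t)$ on $(0,1/m)$, and the hypothesis $\abs{\mathcal{X}}\ge 3$ enters only through the three-index cancellation on the diagonal — your checks that $P_t^{ij}$ is symmetric, positive and stochastic, and that the affine constraint in the definition of $\ehull$ is redundant because $U\in\mathcal{W}\sym$, are all valid, as is the use of $\mathcal{F}\rev=\mathcal{F}\sym\oplus\mathcal{N}_0$ and the invariance of $\Delta$ on $\mathcal{N}$-cosets to write $\mathcal{W}\rev=\Delta(\mathcal{F}\sym)$. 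What the paper's route buys is explicit coefficients (hence an effective recipe for writing any reversible kernel as an exponential combination of symmetric ones); what yours buys is brevity, a cleaner dual formulation, a sharper intermediate result ($\mathcal{L}=\mathcal{F}\sym$ exactly), and a transparent explanation of why $\abs{\mathcal{X}}=2$ fails via the surviving functional $\delta_1\trn\delta_1-\delta_2\trn\delta_2$, consistent with the paper's remark following the theorem.
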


\begin{proof}
We begin by proving the inclusion $\ehull(\mathcal{W} \sym ) \text{ \GW{$\subset$} } \mathcal{W} \rev$.
Let $P \in \ehull(\mathcal{W} \sym )$, then
there exist a positive $\widetilde{P} \in \mathcal{F}_+$, and $k \in \N, \alpha_1, \dots, \alpha_k \in \R, P_1, \dots, P_k \in \mathcal{W} \sym$ such that
$\log \widetilde{P}(x,x') = \sum_{i = 1}^{k} \alpha_i \log P_i(x,x')$.
Observe that the function $\sum_{i = 1}^{k} \alpha_i \log P_i(x,x')$ is symmetric in $x$ and $x'$, thus $\log \widetilde{P}(x,x')$ is log-reversible, and $P$ is reversible.

We now prove the second inclusion $\mathcal{W} \rev  \text{ \GW{$\subset$} } \ehull(\mathcal{W} \sym )$.
We let\GWS{
$$\mathcal{H} = \spann \left( \set{ \log [P] \colon P \in \mathcal{W}\sym } \cup \mathcal{N} \right).$$}
Recall from Theorem~\ref{theorem:reversible-basis} that the functions $g_{ij} = \delta_i \trn \delta_j + \delta_j \trn \delta_i$ , for $(i,j) \in T(\mathcal{X}^2)$, form a basis of the quotient space $\mathcal{G} \rev  = \mathcal{F} \rev / \mathcal{N}$.
It suffices therefore to show that $\set{g_{ij} \colon (i, j) \in T(\mathcal{X}^2)} \text{ \GW{$\subset$} } \mathcal{H}$.
Introduce a free parameter $t \in (0, 1), t \neq 1/2$, and let us fix $(i,j) \in T_+(\mathcal{X})$. Consider $P_{ij,t} \in \mathcal{W} \sym $ defined as follows
\begin{equation*}
    P_{ij,t}(x,x') \eqdef \begin{cases}
    2(1 - t)/\abs{\mathcal{X}} & \text{ if } (x,x') \in \set{(i,i), (j,j)}, \\
    2t/\abs{\mathcal{X}} & \text{ if } (x,x') \in \set{(i,j), (j,i)}, \\
    1/\abs{\mathcal{X}} & \text{ otherwise, }
    \end{cases}
\end{equation*}
and the functions $\hat{h}_{ij}, \tilde{h}_{ij}$
\begin{equation*}
\begin{split}
    \hat{h}_{ij} &= \log \abs{\mathcal{X}} + \log \left[P_{ij,t} \right] = a (\delta_i \trn \delta_i + \delta_j \trn \delta_j) + b (\delta_i \trn \delta_j + \delta_j \trn \delta_i), \\
    \tilde{h}_{ij} &= \log \abs{\mathcal{X}} + \log \left[ P_{ij,1 - t} \right] = b (\delta_i \trn \delta_i + \delta_j \trn \delta_j) + a (\delta_i \trn \delta_j + \delta_j \trn \delta_i), \\
\end{split}
\end{equation*}
where for simplicity we wrote $a = \log 2(1 - t)$ and $b = \log 2t \neq a$. Since the function $((x, x') \mapsto \log \abs{\mathcal{X}}) \in \mathcal{N}$, we have 
$\hat{h}_{ij}, \tilde{h}_{ij} \in \mathcal{H}$. Notice that we can write
$$g_{ij} = \frac{b \hat{h}_{ij} - a \tilde{h}_{ij}}{b^2 - a^2},$$
hence also $g_{ij} \in \mathcal{H}$. Introduce the function
$$h_{ij} = \frac{a \hat{h}_{ij} - b \tilde{h}_{ij}}{a^2 - b^2} = \delta_i \trn \delta_i + \delta_j \trn \delta_j \in \mathcal{H},$$
and observe that we can rewrite the identity $I = \unit \trn \unit - \sum_{(i,j) \in T_+(\mathcal{X}^2)} g_{ij}$ \GW{with $\unit \trn \unit$ being a  constant function. It follows that $I \in \mathcal{H}$,} and 
for any $j \in \mathcal{X}$, we can express 
$$g_{jj} = \frac{2}{\text{\GWS{$\abs{\mathcal{X}} - 2$}}} \left( \sum_{\substack{i \in \mathcal{X} \\ i > j}} h_{ij} + \sum_{\substack{i \in \mathcal{X} \\ i < j}} h_{ji} - I \right) \in \mathcal{H}.$$
As a result, $\set{g_{ij} \colon (i,j) \in T(\mathcal{X}^2)} \text{ \GW{$\subset$} } \mathcal{H}$, and the theorem follows.
\end{proof}

\begin{remark}
Observe that in the above proof, it is crucial that $\abs{\mathcal{X}} \geq 3$. For $\abs{\mathcal{X}} =2 $, we can only have $h_{21} = \begin{psmallmatrix} 1 & 0 \\ 0 & 1 \end{psmallmatrix}$, and cannot construct $g_{11} = \begin{psmallmatrix} 1 & 0 \\ 0 & 0 \end{psmallmatrix}$ nor $g_{22} = \begin{psmallmatrix} 0 & 0 \\ 0 & 1 \end{psmallmatrix}$. This is consistent with the observation that $\ehull(\mathcal{W} \sym) \neq \mathcal{W} \rev$ for $\abs{\mathcal{X}} =2 $.
\end{remark}

Secondly, we show that 
$\mathcal{W} \rev$ is also the smallest mixture family that contains $\mathcal{W} \iid$, the family of Markov kernels that correspond to iid processes. For this, we define minimality in terms of a mixture hull.

\begin{definition}[Mixture hull]
\label{definition:m-hull}
Let $\mathcal{V} \text{ \GW{$\subset$} } \mathcal{W}$.
\begin{equation*}
\begin{split}
\mhull(\mathcal{V}) = \Bigg\{ P &\colon Q \in \mathcal{Q},  Q = \sum_{i = 1}^{k} \alpha_i  Q_i, \\ &k \in \N, \alpha_1, \dots, \alpha_k \in \R, P_1, \dots, P_k \in \mathcal{V}\Bigg\},
\end{split}
\end{equation*}
where $Q$ (resp. $Q_i$) pertains to $P$ (resp. $P_i$).
\end{definition}

\begin{theorem}
\label{theorem:m-hull-iid-is-reversible}
It holds that
$$\mhull(\mathcal{W} \iid ) = \mathcal{W} \rev.$$
\end{theorem}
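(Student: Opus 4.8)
The plan is to establish the two inclusions $\mhull(\mathcal{W}\iid) \subset \mathcal{W}\rev$ and $\mathcal{W}\rev \subset \mhull(\mathcal{W}\iid)$ separately, working throughout at the level of edge measures. For the first inclusion, recall from Example~\ref{example:degenerate-iid} that for a memoryless kernel $P$ with stationary distribution $\pi$, the associated edge measure factorizes as $Q(x,x') = \pi(x)\pi(x')$, which is symmetric. Hence any affine combination $Q = \sum_{i=1}^k \alpha_i Q_i$ with each $Q_i$ symmetric is again symmetric (affinity preserves symmetry), so if it happens to lie in $\mathcal{Q}$ the corresponding kernel is reversible by the characterization at \eqref{eq:equivalent-edge-measure-set} together with symmetry of $Q$. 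This gives $\mhull(\mathcal{W}\iid)\subset\mathcal{W}\rev$ almost immediately.

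The substance is the reverse inclusion $\mathcal{W}\rev \subset \mhull(\mathcal{W}\iid)$. By Theorem~\ref{theorem:explicit-parametrization} (or Theorem~\ref{theorem:reversible-basis} via Definition~\ref{definition:m-family-parametric}-$(ii)$), the reversible family is the mixture family over the edge-measure space spanned affinely by the basis functions $g_{ij} = \delta_i\trn\delta_j + \delta_j\trn\delta_i$ for $(i,j)\in T(\mathcal{X}^2)$ together with the anchor $g_\star/2$. So it suffices to exhibit, for each such basis direction, an affine combination of iid edge measures $\pi\trn\pi$ that realizes it; equivalently, to show that the affine hull of $\set{\pi\trn\pi : \pi\in\mathcal{P}_+(\mathcal{X})}$ inside the symmetric matrices with unit total mass equals the full affine space spanned by $\set{g_{ij}}$. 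The key computational device is to differentiate or take finite differences of the rank-one map $\pi\mapsto\pi\trn\pi$: perturbing $\pi$ in direction $e_i - e_m$ and $e_j - e_m$ and forming second differences produces, up to lower-order terms, the symmetric rank-two combinations $\delta_i\trn\delta_j+\delta_j\trn\delta_i$ and the diagonal terms $\delta_i\trn\delta_i$. Concretely, I would pick a base point such as $\pi = \unit/\abs{\mathcal{X}}$ (so $\pi\trn\pi = U$, the uniform edge measure, corresponding to the m-family origin), and use that $(\pi+\epsilon u)\trn(\pi+\epsilon u) = \pi\trn\pi + \epsilon(\pi\trn u + u\trn\pi) + \epsilon^2 u\trn u$; choosing $u$ ranging over $e_i-e_j$ for various $i,j$ and taking suitable $\R$-affine combinations of the resulting edge measures (the coefficients $\alpha_i$ are allowed to be arbitrary reals, and sum to $1$ is automatic once $U$ is in the family by the remark after Definition~\ref{definition:e-hull}, with the analogous remark applying here) lets one solve for each $g_{ij}$ and each $g_{ii}$ individually. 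Collecting these shows every generator of the reversible m-family lies in $\mhull(\mathcal{W}\iid)$, and since $\mhull$ is by construction closed under affine combinations of edge measures, $\mathcal{W}\rev\subset\mhull(\mathcal{W}\iid)$.

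The main obstacle I anticipate is bookkeeping in this linear-algebra step: one must verify that the span of $\set{\pi\trn u + u\trn\pi, u\trn u : u\in\R^\mathcal{X}_0}$ (where $\R^\mathcal{X}_0$ is the hyperplane of mean-zero vectors) together with the base point $U$ really covers the $\abs{\mathcal{X}}(\abs{\mathcal{X}}+1)/2$-dimensional space of symmetric edge measures, i.e. that no codimension sneaks in. A clean way to close this is a dimension count: the target space $\mathcal{Q}\rev$ has dimension $\abs{\mathcal{X}}(\abs{\mathcal{X}}+1)/2 - 1$ (Theorem~\ref{theorem:edge-measures-e-family}), $\mhull(\mathcal{W}\iid)$ is an m-family contained in it, and one needs only produce enough affinely independent members $\pi_\ell\trn\pi_\ell$ — for instance by taking $\pi$ supported-perturbations as above and checking a Vandermonde-type nonvanishing — to force equality. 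One should also note the edge case $\abs{\mathcal{X}}=2$ works here (unlike Theorem~\ref{theorem:e-hull-symmetric-is-reversible}), since the diagonal terms are recoverable from $u\trn u$ with $u = e_1 - e_2$ directly. I would finish by remarking that the affine (rather than convex) combinations are essential: the convex hull of $\set{\pi\trn\pi}$ is strictly smaller, matching the fact that $\mathcal{W}\iid$ is not itself an m-family (Lemma~\ref{lemma:iid-not-m-family}).
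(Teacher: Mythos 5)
Your proposal is correct, and for the hard inclusion it takes a genuinely different route from the paper. The paper also reduces everything to showing that iid edge measures span the symmetric matrices $\mathcal{F}\sym$, but it does so by picking the specific two-point mixtures $\pi_{ij,\eps} = \frac{\eps}{\abs{\mathcal{X}}}\unit + (1-\eps)\frac{\delta_i+\delta_j}{2}$, proving that the degenerate family $\set{Q_{ij,0} \colon i \geq j}$ is a basis of $\mathcal{F}\sym$, and then transferring that basis to $\eps>0$ (so that the generating kernels actually lie in $\mathcal{W}\iid$) via a stability-of-linear-independence lemma together with an explicit $\ell_1$ perturbation bound $\nrm{Q_{ij,\eps}-Q_{ij,0}}_{1,1} \leq 5\eps\abs{\mathcal{X}}^2$. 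You instead center at the uniform distribution and exploit the exact identity $(\pi+\eps u)\trn(\pi+\eps u) = \pi\trn\pi + \eps(\pi\trn u + u\trn\pi) + \eps^2\, u\trn u$: symmetric finite differences isolate $\pi\trn u + u\trn\pi$ and $u\trn u$ exactly (no limit or stability lemma required, only $\eps$ small enough that $\pi \pm \eps u$ stays positive), and polarization $u\trn v + v\trn u = (u+v)\trn(u+v) - u\trn u - v\trn v$ then yields all off-diagonal generators. To close your anticipated bookkeeping step, the cleanest check is the direct-sum decomposition of $\mathcal{F}\sym$ along $\R^{\mathcal{X}} = \langle\unit\rangle \oplus \R^{\mathcal{X}}_0$: the three pieces $\unit\trn\unit$, $\set{\unit\trn u + u\trn\unit \colon u \in \R^{\mathcal{X}}_0}$, and $\spann\set{u\trn u \colon u \in \R^{\mathcal{X}}_0}$ have dimensions $1$, $\abs{\mathcal{X}}-1$ and $\abs{\mathcal{X}}(\abs{\mathcal{X}}-1)/2$, summing to $\abs{\mathcal{X}}(\abs{\mathcal{X}}+1)/2 = \dim\mathcal{F}\sym$, so no codimension sneaks in; your dimension-count fallback is thus not even needed once this is written out. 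What each approach buys: the paper's argument produces an explicit basis of $\mathcal{F}\sym$ made of genuine iid edge measures (at the price of the perturbation lemma), while yours gives exact, coefficient-explicit expansions of each $g_{ij}$, $g_{ii}$ as finite linear combinations of iid edge measures and, as you note, handles $\abs{\mathcal{X}}=2$ transparently (as does the paper's, since the theorem carries no restriction on $\abs{\mathcal{X}}$). Two minor points: the m-hull of Definition~\ref{definition:m-hull} imposes no constraint $\sum_i\alpha_i=1$ (it is automatic from total mass), so the appeal to the remark after Definition~\ref{definition:e-hull} is unnecessary; and your easy inclusion $\mhull(\mathcal{W}\iid)\subset\mathcal{W}\rev$ coincides with the paper's.
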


\begin{proof}

Let $P \in \mhull(\mathcal{W} \iid)$, then the corresponding edge measure can be expressed as a linear combination 
$\sum_{i = 1}^{k} \alpha_i Q_i$, with $k \in \N, \alpha_1, \dots, \alpha_k \in \R$, and where the $Q_i$ pertain to some degenerate iid kernel $P_i = \unit \trn \pi_i$. This implies that $Q_i(x,x') = \pi_i(x) \pi_i(x')$, hence $Q_i$ is symmetric. In turn, $Q$ is symmetric, i.e. $P$ is reversible, and $\mhull(\mathcal{W} \iid ) \text{ \GW{$\subset$} } \mathcal{W} \rev $.\\\\
For $(i,j) \in \mathcal{X}^2$, $i \geq j$, and $\eps \in [0, 1]$, consider the mixture distribution 
$$\pi_{ij, \eps} = \frac{\eps}{\abs{\mathcal{X}}} \unit + (1 - \eps) \frac{\delta_i + \delta_j}{2} \in \mathcal{P}(\mathcal{X}).$$
A direct computation yields that the pair probabilities of the iid process can be written as
\begin{equation*}
\begin{split}
Q_{i j, \eps}(x, x') = & \frac{\eps^2}{\abs{\mathcal{X}}^2} + \frac{\eps(1 - \eps)}{2 \abs{\mathcal{X}}} \left\{\delta_i(x) + \delta_j(x) + \delta_i(x') + \delta_j(x') \right\} \\ &+ \frac{(1 - \eps)^2}{4} \left\{ \delta_i(x)\delta_i(x') + \delta_i(x)\delta_j(x') + \delta_j(x)\delta_i(x') + \delta_j(x)\delta_j(x')\right\}.
\end{split}
\end{equation*}
\GWS{We first show that $\set{Q_{ij,0} \colon i \geq j}$ forms a basis of $\mathcal{F}\sym$.
    Let $\set{\alpha_{ij} \in \R \colon i \geq j}$ be such that
    $\sum_{i \geq j} \alpha_{ij} Q_{ij, 0} = 0$.
    Consider first $x, x' \in \mathcal{X}$ such that $x > x'$.
    $$\sum_{i \geq j} \alpha_{ij} Q_{ij, 0}(x,x') = \frac{1}{4}\sum_{i \geq j} \alpha_{ij} \delta_i(x)\delta_j(x') = \frac{1}{4}\alpha_{xx'} = 0 \text{ and } \alpha_{xx'} = 0.$$
    By a similar argument for the case $x < x'$, we obtain that $\alpha_{xx'} = 0$ for any $x \neq x'$.
    Inspecting now the diagonal for $x \in \mathcal{X}$, 
    $$\sum_{i \geq j} \alpha_{ij} Q_{ij, 0}(x,x) = \sum_{i \in \mathcal{X}} \alpha_{ii} Q_{ii, 0}(x,x) = \sum_{i \in \mathcal{X}} \alpha_{ii} \delta_i(x) = \alpha_{xx} = 0.$$
    This implies that the family $\set{Q_{ij, 0} \colon i \geq j}$ is independent. Since $\dim \mathcal{F} \sym = \abs{\mathcal{X}}(\abs{\mathcal{X}} + 1) /2 = \abs{\set{Q_{ij, 0} \colon i \geq j }}$, it is maximally so, thus forms a basis.
    However, the basis elements are not in $\mathcal{W} \iid$. We therefore examine the case $\eps > 0$, and leverage the property that in normed vector spaces, finite linearly independent systems are stable under small perturbations (see Lemma~\ref{lemma:stability-basis} reported below for convenience) in order to show existence of a basis in $\mathcal{W} \iid$.
    \begin{lemma}[{\citet[p.9, Exercise~35]{costara2013exercises}}]
    \label{lemma:stability-basis}
    Let $n \in \N$, $X$ is a normed vector space and $x_1, \dots, x_n$ are $n$ linearly independent elements in $X$. Then there exists $\eta > 0$ such that if $y_1, y_2, \dots, y_n$ are such that $\nrm{y_i} < \eta$ for $i = 1, \dots, n$, then $x_1 + y_1, x_2 + y_2, \dots, x_n + y_n$ are also $n$ linearly independent elements in $X$.
    \end{lemma}
    Let us consider $(\mathcal{F} \sym, \nrm{\cdot}_{1,1})$, the space of real symmetric matrices equipped with the \emph{entry-wise} $\ell_1$ norm. For any $i \geq j$ and for any $\eps \in (0, 1)$,
    \begin{equation*}
    \begin{split}
        & \nrm{Q_{ij, \eps} - Q_{ij, 0}}_{1,1} \eqdef
\sum_{x,x' \in \mathcal{X}} \abs{ Q_{ij, \eps}(x,x') - Q_{ij, 0}(x,x') } \\  
&\leq \abs{\mathcal{X}}^2 \abs{\frac{\eps^2}{\abs{\mathcal{X}}^2}} + \abs{\frac{\eps(1 - \eps)}{2 \abs{\mathcal{X}}}} \sum_{x,x'} \abs{ \delta_i(x) + \delta_j(x) + \delta_i(x') + \delta_j(x')} \\
& + \abs{\frac{\eps(2 - \eps)}{4}} \sum_{x,x' \in \mathcal{X}} \abs{ \delta_i(x)\delta_i(x') + \delta_i(x)\delta_j(x') + \delta_j(x)\delta_i(x') + \delta_j(x)\delta_j(x')} \\
&\leq \eps^2 + 2 \eps \abs{\mathcal{X}} + 2 \eps \abs{\mathcal{X}}^2,
\end{split}
    \end{equation*}
thus $$\nrm{Q_{ij, \eps} - Q_{ij, 0}}_{1,1} \leq 5 \eps \abs{\mathcal{X}}^2.$$
Let $\eta$ as defined in Lemma~\ref{lemma:stability-basis}, with respect to the basis $\set{Q_{ij,0} \colon i \geq j}$,
and choose $0 < \eps < \frac{\eta}{5 \abs{\mathcal{X}}^2}$. 
Then $\nrm{Q_{ij, \eps} - Q_{ij, 0}}_{1,1} < \eta$, thus the family $\set{ Q_{ij, \eps \colon i \geq j} }$ is a also basis for $\mathcal{F} \sym$ that lies in $\mathcal{W} \iid$, whence the theorem.}
\end{proof}

\section*{Acknowledgements}
We thank the anonymous referees for the helpful comments, which helped us to improve the presentation of this manuscript.
We also thank Hiroshi Nagaoka for an enlightening discussion, and constructive remarks.

\bibliography{bibliography}

\begin{thebibliography}{48}
\providecommand{\natexlab}[1]{#1}
\providecommand{\url}[1]{\texttt{#1}}
\expandafter\ifx\csname urlstyle\endcsname\relax
  \providecommand{\doi}[1]{doi: #1}\else
  \providecommand{\doi}{doi: \begingroup \urlstyle{rm}\Url}\fi

\bibitem[Aldous and Fill(2002)]{aldous2002reversible}
D.~Aldous and J.~Fill.
\newblock Reversible {M}arkov chains and random walks on graphs, 2002.

\bibitem[Amari and Nagaoka(2007)]{amari2007methods}
S.-i. Amari and H.~Nagaoka.
\newblock \emph{Methods of information geometry}, volume 191.
\newblock American Mathematical Soc., 2007.

\bibitem[Brill et~al.(2018)Brill, Hlynka, Jiang,
  et~al.]{jiang2018reversibility}
P.~Brill, M.~Hlynka, Q.~Jiang, et~al.
\newblock Reversibility checking for {M}arkov chains.
\newblock \emph{Communications on Stochastic Analysis}, 12\penalty0
  (2):\penalty0 2, 2018.

\bibitem[Brooks et~al.(2011)Brooks, Gelman, Jones, and
  Meng]{brooks2011handbook}
S.~Brooks, A.~Gelman, G.~Jones, and X.-L. Meng.
\newblock \emph{Handbook of {M}arkov {C}hain {M}onte {C}arlo}.
\newblock CRC press, 2011.

\bibitem[Chernoff(1952)]{chernoff1952measure}
H.~Chernoff.
\newblock A measure of asymptotic efficiency for tests of a hypothesis based on
  the sum of observations.
\newblock \emph{The Annals of Mathematical Statistics}, pages 493--507, 1952.

\bibitem[Costara and Popa(2013)]{costara2013exercises}
C.~Costara and D.~Popa.
\newblock \emph{Exercises in functional analysis}, volume~26.
\newblock Springer Science \& Business Media, 2013.

\bibitem[Csisz{\'a}r et~al.(1987)Csisz{\'a}r, Cover, and
  Choi]{csiszar1987conditional}
I.~Csisz{\'a}r, T.~Cover, and B.-S. Choi.
\newblock Conditional limit theorems under {M}arkov conditioning.
\newblock \emph{IEEE Transactions on Information Theory}, 33\penalty0
  (6):\penalty0 788--801, 1987.

\bibitem[Dembo and Zeitouni(1998)]{dembo2011large}
A.~Dembo and O.~Zeitouni.
\newblock \emph{Large deviations techniques and applications}, volume~38.
\newblock 1998.

\bibitem[Diaconis et~al.(2006)Diaconis, Rolles, et~al.]{diaconis2006bayesian}
P.~Diaconis, S.~W. Rolles, et~al.
\newblock Bayesian analysis for reversible {M}arkov chains.
\newblock \emph{Annals of Statistics}, 34\penalty0 (3):\penalty0 1270--1292,
  2006.

\bibitem[Dobrushin et~al.(1988)Dobrushin, Sukhov, and
  Fritz]{dobrushin1988kolmogorov}
R.~L. Dobrushin, Y.~M. Sukhov, and J.~Fritz.
\newblock {A.N.} {K}olmogorov - the founder of the theory of reversible
  {M}arkov processes.
\newblock \emph{Russian Mathematical Surveys}, 43\penalty0 (6):\penalty0
  157--182, 1988.

\bibitem[Donsker and Varadhan(1975)]{donsker1975asymptotic}
M.~D. Donsker and S.~S. Varadhan.
\newblock Asymptotic evaluation of certain {M}arkov process expectations for
  large time, i.
\newblock \emph{Communications on Pure and Applied Mathematics}, 28\penalty0
  (1):\penalty0 1--47, 1975.

\bibitem[Feigin et~al.(1981)]{feigin1981conditional}
P.~D. Feigin et~al.
\newblock Conditional exponential families and a representation theorem for
  asympotic inference.
\newblock \emph{The Annals of Statistics}, 9\penalty0 (3):\penalty0 597--603,
  1981.

\bibitem[Fill(1991)]{fill1991eigenvalue}
J.~A. Fill.
\newblock Eigenvalue bounds on convergence to stationarity for nonreversible
  {M}arkov chains, with an application to the exclusion process.
\newblock \emph{The annals of applied probability}, pages 62--87, 1991.

\bibitem[Fujiwara(2015)]{fujiwara2015foundations}
A.~Fujiwara.
\newblock Foundations of information geometry.
\newblock \emph{Makino Shoten}, 2015.

\bibitem[Gallager(1968)]{gallager1968information}
R.~G. Gallager.
\newblock \emph{Information theory and reliable communication}, volume~2.
\newblock Springer, 1968.

\bibitem[G{\"a}rtner(1977)]{gartner1977large}
J.~G{\"a}rtner.
\newblock On large deviations from the invariant measure.
\newblock \emph{Theory of Probability \& Its Applications}, 22\penalty0
  (1):\penalty0 24--39, 1977.

\bibitem[Hayashi(2019)]{hayashi2019local}
M.~Hayashi.
\newblock Local equivalence problem in hidden {M}arkov model.
\newblock \emph{Information Geometry}, 2\penalty0 (1):\penalty0 1--42, 2019.

\bibitem[Hayashi(2021+)]{hayashi2021information}
M.~Hayashi.
\newblock Information geometry approach to parameter estimation in hidden
  {M}arkov model.
\newblock \emph{Bernoulli}, to appear, 2021+.

\bibitem[Hayashi and Watanabe(2016)]{hayashi2014information}
M.~Hayashi and S.~Watanabe.
\newblock Information geometry approach to parameter estimation in {M}arkov
  chains.
\newblock \emph{The Annals of Statistics}, 44\penalty0 (4):\penalty0 1495 --
  1535, 2016.
\newblock \doi{10.1214/15-AOS1420}.
\newblock URL \url{https://doi.org/10.1214/15-AOS1420}.

\bibitem[Hsu et~al.(2019)Hsu, Kontorovich, Levin, Peres, Szepesv\'{a}ri, and
  Wolfer]{hsu2019}
D.~Hsu, A.~Kontorovich, D.~A. Levin, Y.~Peres, C.~Szepesv\'{a}ri, and
  G.~Wolfer.
\newblock Mixing time estimation in reversible markov chains from a single
  sample path.
\newblock \emph{Ann. Appl. Probab.}, 29\penalty0 (4):\penalty0 2439--2480, 08
  2019.
\newblock \doi{10.1214/18-AAP1457}.
\newblock URL \url{https://doi.org/10.1214/18-AAP1457}.

\bibitem[Hudson(1982)]{hudson1982large}
I.~L. Hudson.
\newblock Large sample inference for {M}arkovian exponential families with
  application to branching processes with immigration.
\newblock \emph{Australian Journal of Statistics}, 24\penalty0 (1):\penalty0
  98--112, 1982.

\bibitem[Ito and Amari(1988)]{ito1988}
H.~Ito and S.-i. Amari.
\newblock Geometry of information sources.
\newblock In \emph{Proceedings of the 11th Symposium on Information Theory and
  Its Applications (SITA '88)}, pages 57--60, 1988.

\bibitem[Kelly(2011)]{kelly2011reversibility}
F.~P. Kelly.
\newblock \emph{Reversibility and stochastic networks}.
\newblock Cambridge University Press, 2011.

\bibitem[Kolmogorov(1936)]{kolmogorov1936theorie}
A.~Kolmogorov.
\newblock Zur theorie der {M}arkoffschen ketten.
\newblock \emph{Mathematische Annalen}, 112\penalty0 (1):\penalty0 155--160,
  1936.

\bibitem[Kolmogorov(1937)]{kolmogorov1937umkehrbarkeit}
A.~Kolmogorov.
\newblock Zur umkehrbarkeit der statistischen naturgesetze.
\newblock \emph{Mathematische Annalen}, 113\penalty0 (1):\penalty0 766--772,
  1937.

\bibitem[K{\"u}chler and S{\o}rensen(1989)]{kuchler1989exponential}
U.~K{\"u}chler and M.~S{\o}rensen.
\newblock Exponential families of stochastic processes: A unifying
  semimartingale approach.
\newblock \emph{International Statistical Review/Revue Internationale de
  Statistique}, pages 123--144, 1989.

\bibitem[K{\"u}chler and S{\o}rensen(1998)]{kuchler1998exponential}
U.~K{\"u}chler and M.~S{\o}rensen.
\newblock On exponential families of {M}arkov processes.
\newblock \emph{Journal of statistical planning and inference}, 66\penalty0
  (1):\penalty0 3--19, 1998.

\bibitem[Levin et~al.(2009)Levin, Peres, and Wilmer]{levin2009markov}
D.~A. Levin, Y.~Peres, and E.~L. Wilmer.
\newblock \emph{{M}arkov chains and mixing times, second edition}.
\newblock American Mathematical Soc., 2009.

\bibitem[Meyer(2000)]{meyer2000matrix}
C.~D. Meyer.
\newblock \emph{Matrix analysis and applied linear algebra}, volume~71.
\newblock Siam, 2000.

\bibitem[Miller(1961)]{miller1961convexity}
H.~Miller.
\newblock A convexity property in the theory of random variables defined on a
  finite {M}arkov chain.
\newblock \emph{The Annals of mathematical statistics}, pages 1260--1270, 1961.

\bibitem[Moulos and Anantharam(2019)]{moulos2019optimal}
V.~Moulos and V.~Anantharam.
\newblock Optimal {C}hernoff and {H}oeffding bounds for finite state {M}arkov
  chains.
\newblock \emph{arXiv preprint arXiv:1907.04467}, 2019.

\bibitem[Nagaoka(2005)]{nagaoka2017exponential}
H.~Nagaoka.
\newblock The exponential family of {M}arkov chains and its information
  geometry.
\newblock In \emph{The proceedings of the Symposium on Information Theory and
  Its Applications}, volume~28, pages 601--604, 2005.
\newblock (available on arXiv:1701.06119).

\bibitem[Nagaoka(2017)]{nagaoka2017information}
H.~Nagaoka.
\newblock Information-geometrical characterization of statistical models which
  are statistically equivalent to probability simplexes.
\newblock In \emph{2017 IEEE International Symposium on Information Theory
  (ISIT)}, pages 1346--1350. IEEE, 2017.

\bibitem[Nakagawa and Kanaya(1993)]{nakagawa1993converse}
K.~Nakagawa and F.~Kanaya.
\newblock On the converse theorem in statistical hypothesis testing for
  {M}arkov chains.
\newblock \emph{IEEE transactions on information theory}, 39\penalty0
  (2):\penalty0 629--633, 1993.

\bibitem[Nielsen and Weber(2015)]{nielsen2015computing}
A.~Nielsen and M.~Weber.
\newblock Computing the nearest reversible {M}arkov chain.
\newblock \emph{Numerical Linear Algebra with Applications}, 22\penalty0
  (3):\penalty0 483--499, 2015.

\bibitem[Ohara and Ishi(2016)]{ohara2016doubly}
A.~Ohara and H.~Ishi.
\newblock Doubly autoparallel structure on the probability simplex.
\newblock In \emph{Information Geometry and its Applications IV}, pages
  323--334. Springer, 2016.

\bibitem[Pistone and Rogantin(2013)]{pistone2013algebra}
G.~Pistone and M.~P. Rogantin.
\newblock The algebra of reversible {M}arkov chains.
\newblock \emph{Annals of the Institute of Statistical Mathematics},
  65\penalty0 (2):\penalty0 269--293, 2013.

\bibitem[Schr{\"o}dinger(1931)]{schrodinger1931umkehrung}
E.~Schr{\"o}dinger.
\newblock {\"U}ber die umkehrung der naturgesetze.
\newblock \emph{Sitzungsberichte der preussischen Akademie der Wissenschaften,
  physikalische mathematische Klasse}, 8\penalty0 (N9):\penalty0 144--153,
  1931.

\bibitem[S{\o}rensen(1986)]{sorensen1986sequential}
M.~S{\o}rensen.
\newblock On sequential maximum likelihood estimation for exponential families
  of stochastic processes.
\newblock \emph{International Statistical Review/Revue Internationale de
  Statistique}, pages 191--210, 1986.

\bibitem[Stefanov(1995)]{stefanov1995explicit}
V.~T. Stefanov.
\newblock Explicit limit results for minimal sufficient statistics and maximum
  likelihood estimators in some {M}arkov processes: exponential families
  approach.
\newblock \emph{The Annals of Statistics}, pages 1073--1101, 1995.

\bibitem[Stewart(1990)]{stewart1990matrix}
G.~W. Stewart.
\newblock \emph{Matrix perturbation theory}.
\newblock Boston: Academic Press. ISBN: 0126702306., 1990.

\bibitem[Suomela(1979)]{suomela1979invariant}
P.~Suomela.
\newblock Invariant measures of time-reversible {M}arkov chains.
\newblock \emph{Journal of Applied Probability}, pages 226--229, 1979.

\bibitem[Takeuchi and Kawabata(2007)]{takeuchi2007exponential}
J.~Takeuchi and T.~Kawabata.
\newblock Exponential curvature of {M}arkov models.
\newblock In \emph{2007 IEEE International Symposium on Information Theory},
  pages 2891--2895. IEEE, 2007.

\bibitem[Takeuchi and Nagaoka(2017{\natexlab{a}})]{takeuchi2017asymptotic}
J.~Takeuchi and H.~Nagaoka.
\newblock On asymptotic exponential family of {M}arkov sources and exponential
  family of {M}arkov kernels, 2017{\natexlab{a}}.

\bibitem[Takeuchi and Nagaoka(2017{\natexlab{b}})]{takeuchi2017information}
J.~Takeuchi and H.~Nagaoka.
\newblock Information geometry of the family of {M}arkov kernels defined by a
  context tree.
\newblock In \emph{2017 IEEE Information Theory Workshop (ITW)}, pages
  429--433. IEEE, 2017{\natexlab{b}}.

\bibitem[Takeuchi and Barron(1998)]{takeuchi1998asymptotically}
J.-i. Takeuchi and A.~R. Barron.
\newblock Asymptotically minimax regret by {B}ayes mixtures.
\newblock In \emph{Proceedings. 1998 IEEE International Symposium on
  Information Theory (Cat. No. 98CH36252)}, page 318. IEEE, 1998.

\bibitem[Van~Campenhout and Cover(1981)]{van1981maximum}
J.~Van~Campenhout and T.~Cover.
\newblock Maximum entropy and conditional probability.
\newblock \emph{IEEE Transactions on Information Theory}, 27\penalty0
  (4):\penalty0 483--489, 1981.

\bibitem[Watanabe and Hayashi(2017)]{watanabe2017}
S.~Watanabe and M.~Hayashi.
\newblock Finite-length analysis on tail probability for {M}arkov chain and
  application to simple hypothesis testing.
\newblock \emph{Ann. Appl. Probab.}, 27\penalty0 (2):\penalty0 811--845, 04
  2017.
\newblock \doi{10.1214/16-AAP1216}.
\newblock URL \url{https://doi.org/10.1214/16-AAP1216}.

\end{thebibliography}
\bibliographystyle{abbrvnat}

\end{document}